\newcommand*{\bigboxplus}{\DOTSB\mathop{\mathpalette\big@boxplus\relax}\slimits@}
\newtheorem{thm}{Theorem}
 \newtheorem{prop}[thm]{Proposition}
 \newtheorem{lemma}[thm]{Lemma}
 \newtheorem{cor}[thm]{Corollary}
 \newtheorem{Proposition}[thm]{Proposition}
 \theoremstyle{definition}
 \newtheorem{definition}[thm]{Definition}
 \newtheorem{ex}[thm]{Example}
\newtheorem{const}[thm]{Construction}
 \theoremstyle{remark}
 \newtheorem{remark}[thm]{Remark}
\numberwithin{thm}{section}
\numberwithin{equation}{section}
\theoremstyle{definition}
\def \Q {\mathbb{Q}}
\def \Z {\mathbb{Z}}
\def \R {\mathbb{R}}
\def \B {\mathbb{B}}
\def \X {\mathbb{X}}
\def \Aut{\mathop{\mathrm{Aut}}\nolimits}
\def \Gal{\mathop{\mathrm{Gal}}\nolimits}
\def \ad{\mathop{\mathrm{ad}}\nolimits}
\def \Bun{\mathop{\mathrm{Bun}}\nolimits}
\def \det{\mathop{\mathrm{det}}\nolimits}
\def \deg{\mathop{\mathrm{deg}}\nolimits}
\def \dim{\mathop{\mathrm{dim}}\nolimits}
\def \FilVect{\mathop{\mathrm{FilVect}}\nolimits}
\def \Fil{\mathop{\mathrm{Fil}}\nolimits}
\def \HN {{\rm HN}}
\def \hn{\mathop{\mathrm{HN}}\nolimits}
\def \hom{\mathop{\mathrm{Hom}}\nolimits}
\def \lim{\mathop{\mathrm{lim}}\nolimits}
\def \Newt{\mathop{\mathrm{Newt}}\nolimits}
\def \Id{\mathop{\mathrm{Id}}\nolimits}
\def \Isoc{\mathop{\mathrm{Isoc}}\nolimits}
\def \Rep{\mathop{\mathrm{Rep}}\nolimits}
\def \tri{\mathop{\mathrm{tri}}\nolimits}
\def \Spec{\mathop{\mathrm{Spec}}\nolimits}
\def \Spa{\mathop{\mathrm{Spa}}\nolimits}
\def \rk{\mathop{\mathrm{rk}}\nolimits}
\def \rank{\mathop{\mathrm{rank}}\nolimits}
\def \val{\mathop{\mathrm{val}}\nolimits}
\def \Vect{\mathop{\mathrm{Vect}}\nolimits}
\def\BG{{$B_{{\rm dR}}^+$-Grassmannian }}
\def\GL{{\rm GL}}
\def\Gr{{\rm Gr}}
\def\Gal{{\rm Gal}}
\def\BB{{\rm BB}}
\def\rk{{\rm rk\,}}
\def\pr{{\rm pr}}
\def\ad{{\rm ad}}
\def\av{{\rm av}}
\def\Bun{{\rm Bun}}
\def\wa{{\rm wa}}
\def\a{{\rm a}}
\def\dom{{\rm dom}}
\def\dim{{\rm dim}\,}
\def\Spa{{\rm Spa}}
\def\dl{(\!(}
\def\dr{)\!)}
\def\ll{[\![}
\def\rr{]\!]}
\def\dR{{\rm dR}}
\def\R{{\mathbb R}}
\def\Z{{\mathbb Z}}
\def\Fl{{\mathcal F\ell}}
\def\E{{\mathcal E}}
\def\Q{{\mathbb Q}}
\def\O{{\mathcal O}}
\def\BdR{{B_{{\rm dR}}}}
\def\B+{{B^+_{{\rm dR}}}}
\def\HT{{\rm HT}}
\def\hn{Harder-Narasimhan }
\def\v{{v}}
\def\X{{Z}}
\long\def\forget#1{}
\begin{document}
	\setcounter{tocdepth}{1}
	\selectlanguage{english}
	\title{A Harder-Narasimhan stratification of the $\B+$-Grassmannian}
	\author{Kieu Hieu Nguyen and Eva Viehmann}

\address{Technische Universit\"at M\"unchen\\Fakult\"at f\"ur Mathematik - M11 \\ Boltzmannstr. 3\\85748 Garching bei M\"unchen\\Germany}
\email{nguyenki@ma.tum.de, viehmann@ma.tum.de}%This email???
\thanks{The authors acknowledge support by the ERC in form of Consolidator Grant 770936:\ NewtonStrat and by Deutsche Forschungsgemeinschaft  (DFG) through the Collaborative Research Centre TRR 326 "Geometry and Arithmetic of Uniformized Structures", project number 444845124.}
\begin{abstract}
{We establish a Harder-Narasimhan formalism for modifications of $G$-bundles on the Fargues-Fontaine curve. The semi-stable stratum of the associated stratification of the $\B+$-Grassmannian coincides with the variant of the weakly admissible locus defined Viehmann, and its classical points agree with those of the basic Newton stratum. When restricted to minuscule affine Schubert cells, the stratification corresponds to the Harder-Narasimhan stratification of Dat, Orlik and Rapoport. We also study basic geometric properties of the strata, and the relation to the Hodge-Newton decomposition.}
\end{abstract}
	\maketitle
	
	\tableofcontents
	
\section{Introduction}

Let $p$ be a prime and let $G$ be a connected reductive group over a finite extension $F$ of $\Q_p$. We fix a conjugacy class of a minuscule cocharacter $\mu$ of $G_{\overline F}$, and denote by $E$ its field of definition, a finite extension of $F$. Let $\Fl(G,\mu)$ be the associated flag variety, considered as an adic space over $\breve E$. Here $\breve E$ denotes the completion of the maximal unramified extension of $E$.

Let $b\in G(\breve F)$. In \cite{RZ96}, Rapoport and Zink define the weakly admissible locus $\Fl(G,\mu,b)^{\wa}\subseteq\Fl(G,\mu)$. It is an open adic subspace of $\Fl(G,\mu)$ having an explicit description as the complement of a certain profinite union of linear subspaces. Its points are characterized by a semi-stability condition on the associated filtered isocrystals with additional structure.

Let $\Fl(G,\mu,b)^{\a}$ denote the admissible locus, an open adic subspace of $\Fl(G,\mu)$. The existence of the admissible locus has been conjectured by Rapoport and Zink, see
 \cite[Conj.~11.4.4]{DOR}. It is characterized by being a subspace of  $\Fl(G,\mu,b)^{\wa}$ 
 having the same classical points and such that there exists a $p$-adic local system with additional structure over $\Fl(G,\mu,b)$ that interpolates the crystalline representations attached to all classical points. For $(G,\mu)$ of PEL type, Hartl \cite{HartlAnnals} and Faltings \cite{Falt2010} gave a construction using the Robba ring, resp.~the crystalline period ring. In general, the admissible locus is constructed using modifications of $G$-bundles on the Fargues-Fontaine curve, and based on work of Fargues-Fontaine \cite{FF}, Fargues \cite{FarGtors}, Kedlaya-Liu \cite{KL} and Scholze \cite{SW17}. As was recently shown in \cite{CFS}, the admissible locus and the weakly admissible locus coincide only in exceptional cases.
 
In order to understand the relation between these two subspaces, it is helpful to view them as part of a broader theory. The first generalization -- carried out among others in work of Fargues and Scholze -- is to consider the adic Newton stratification instead of just the admissible locus (which is the unique open Newton stratum). The second step -- and topic of this article -- is then to establish a similar stratification whose unique open stratum is a variant of the weakly admissible locus. Let us begin by recalling the Newton stratification. To explain this, we also switch to the more general context of fixing any $G(\overline F)$-conjugacy class $\{\mu\}$ of cocharacters of $G_{\overline F}$, and consider affine Schubert cells in the $\B+$-Grassmannian instead of flag varieties. 

By $\Gr_G$ we denote the $B_{{\rm dR}}^+$-Grassmannian as in \cite[Def. 19.1.1]{SW17}. Let $C$ be a complete and algebraically closed field extension of $F$. Then the Cartan decomposition yields $$\Gr_G(C)=\coprod_{\{\mu\}}G(B_{\dR}^+(C))\mu(\xi)^{-1}G(B_{\dR}^+(C))/G(B_{\dR}^+(C))$$
where $\xi$ is a uniformizer of $B_{\dR}^+(C)$ and where we take the union over all $G(\overline F)$-conjugacy classes of cocharacters $\mu$ of $G_{\overline F}$. We obtain an induced subdivision $\Gr_G=\coprod \Gr_{G,\mu}$ into locally spacial sub-diamonds called affine Schubert cells, and also the closed affine Schubert cells $\Gr_{G,\leq\mu}=\coprod_{\mu'\leq\mu}\Gr_{G,\mu}$. For minuscule $\mu$, the natural Bialynicki-Birula map $$\BB_{\mu}:\Gr_{G,\mu}\rightarrow \Fl(G,\mu)^{\diamond}$$ is an isomorphism between the affine Schubert cell and the diamond associated with $\Fl(G,\mu)$. In general, the Bialynicki-Birula map exists, but is not an isomorphism.

Let $X$ be the Fargues-Fontaine curve over the tilt $C^{\flat}$ of $C$ and let $\infty \in X$ be the point corresponding to its untilt $C$. By \cite{FarGtors} we have a bijection between the set of isomorphism classes of $G$-bundles on $X$ and the Kottwitz set $B(G)$. Let $\E_b$ be the $G$-bundle corresponding to the fixed element $b$. Using Beauville-Laszlo uniformization we have for every $x\in \Gr(C)$ a $G$-bundle $\E_{b,x}$ on $X$  that is obtained as a modification of $\E_b$ at $\infty$, compare \cite[III.3]{FarguesScholze}. Mapping $x$ to the isomorphism class of $\E_{b,x}$ one obtains a map $$|\Gr_{G}|\rightarrow |\Bun_G|\cong B(G)$$ to the stack of $G$-bundles on the Fargues-Fontaine curve. For $b=1$, the image of $|\Gr_{G,\mu}|$ or $|\Gr_{G,\leq \mu}|$ consists of the finite set $B(G,-\mu)$. For any $[b']\in B(G)$ we denote by $\Gr_{G,\mu,b}^{[b']}$ the locally closed subspace of $\Gr_{G,\mu}$, and also the induced locally spatial sub-diamond, called the Newton stratum for $[b']$ and $G,\mu,b$.

By \cite[Section 4]{Viehmann20} there is also a generalization of the definition of the weakly admissible locus $\Gr_{G,\mu,b}^{\wa}\subset \Gr_{G,\mu}$ based on modifications of $G$-bundles on the Fargues-Fontaine curve. In the minuscule case it can be identified with the classical weakly admissible locus in $\Fl(G,\mu)$ via the Bialynicki-Birula isomorphism. By \cite[Thm.~1.3]{Viehmann20} the weakly admissible locus in a minuscule affine Schubert cell $\Gr_{G,\mu}$ intersects all Newton strata that are Hodge-Newton indecomposable.

In this article, we embed the notion of weak admissibility in a broader theory by constructing a Harder-Narasimhan stratification of $\Gr_{G}$. Intersecting the semi-stable stratum with any $\Gr_{G,\mu}$ one obtains the weakly admissible locus of \cite{Viehmann20}. We use a natural construction in terms of modifications of $G$-bundles on the curve that ressembles the definition of Fargues-Fontaine's Harder-Narasimhan formalism that led to the Newton stratification. The main difference between the two theories is in the choice of the subbundles to define semi-stability. Fargues and Fontaine consider for a given vector bundle $\E$ on $X$ all subbundles $\E'$ and their slopes. We consider vector bundles $\E=\E_{b,x}$ obtained as a modification \`a la Beauville-Laszlo of some $\E_b$, and only those subbundles that correspond to direct summands of $\E_b$. The slope of such a subbundle of $\E_{b,x}$ is then the same as in the other theory.  From this comparison one immediately obtains that the Harder-Narasimhan invariant is bounded above by (the opposite of) the Newton invariant. In particular, this yields the analog of the result that admissible implies weakly admissible.
 
To extend our theory to $G$-bundles for $G$ other than $\GL_n$ we then use the Tannakian formalism and a result of Cornut and Peche Irissarry \cite{CP}. For this we have to restrict to modifications of the trivial bundle $\E_1$, which is, however, also the most relevant case for the consideration of Newton strata. Nevertheless it would be interesting to see if our theory can be generalized to all $G$ and $b$.

In the past years, several closely related theories have been introduced. More precisely, our definition of the Harder-Narasimhan theory (or close variants of it) have been studied by several authors. However, most of the geometric and comparison properties that we establish are new. Let us comment in more detail. 

In \cite[Section 9.3]{Far19}, Fargues proved the existence of a Harder-Narasimhan filtration for pairs $ (\mathcal{E}, x) $ when  $\mathcal{E}$ is the trivial bundle of rank $n$ and $ x \in \Gr_{n,\mu} (C) $ for some minuscule $\mu$. Our results extend this theory to the case where $\mu$ is arbitrary and where $G$ is any reductive group instead of $\GL_n$ or where the bundle $\E$ may be non-trivial. We also discuss the minuscule case of several of Fargues' conjectures on the geometric properties of the Harder-Narasimhan strata that he stated in \cite[Section 9.7]{Far19}. 

Cornut and Peche-Irissarry \cite{CP} generalized Fargues's theory by defining a Harder-Narasimhan theory for Breuil-Kisin-Fargues modules and call the resulting filtration the Fargues filtration. It coincides with the particular case for $b=1$ of our definition, and for general groups, we use their results. 

In \cite{DOR}, Dat, Orlik and Rapoport use filtered isocrystals and define the weakly admissible locus in the flag variety $\Fl(G,\mu)$ also for non-minuscule $\mu$ . They generalize this also to a Harder-Narasimhan stratification of the flag variety where again the weakly admissible locus is the semi-stable and unique open stratum. One can consider the inverse images  of Dat-Orlik-Rapoport's Harder-Narasimhan strata under the Bialynicki-Birula map, which then define a decomposition of the affine Schubert cell $\Gr_{G,\mu}$. This approach is taken by Shen in the first versions of \cite{Shen}. This theory coincides with our Harder-Narasimhan stratification if $\mu$ is minuscule. In general, already the notions of weak admissibility differ, compare \cite[Ex.~4.10]{Viehmann20}. 

When we finished the present manuscript, Shen posted a new version of \cite{Shen} on the arxiv, in which he changed his definition and adapted several of his arguments, so that part of his new version is in parallel to some of our results (the construction of the Harder-Narasimhan theory for $b=1$, as well as the proof that the strata are locally closed). Most of our geometric results of Sections 5-8 are independent of his work.

Despite the fact that the Harder-Narasimhan theory that we study has been considered (at least in some form) by many people, very little was known about the properties of the associated decomposition of $\Gr_G$.

In Proposition \ref{thmnonempty1} we give a group-theoretic criterion for non-emptiness. Recall from \cite{RapoportAppendix} that the set of non-empty Newton strata (for modifications of the trivial bundle) in $\Gr_{G,\mu}$ is indexed by the set $B(G,-\mu)$. In Proposition \ref{thmnonempty1} we show that the set of non-empty Harder-Narasimhan strata in $\Gr_{G,\mu}$ is contained in an -- in general strict -- subset of $B(G,\mu)$ that has an explicit description in terms of the involved Newton points. In \cite{Orlik06}, Orlik proves a non-emptiness criterion for the Dat-Orlik-Rapoport stratification for $G=\GL_n$ (and filtrations of an arbitrary isocrystal). As a byproduct of our theory, we generalize Orlik's description to the case of any $G$, but the trivial $G$-isocrystal in Proposition \ref{propnonemptyDOR}. 

From the definition of the two stratifications, one obtains the immediate estimate that the Harder-Narasimhan polygon is bounded by the Newton polygon. In Section \ref{secnewt} we extend this comparison, showing that in Hodge-Newton decomposable cases, the Hodge-Newton decomposition also is a coarsening of the Harder-Narasimhan reduction. We use this to study cases where Newton strata and Harder-Narasimhan strata coincide.
We continue in Section \ref{secdim} by giving an estimate for the dimension of Harder-Narasimhan strata in the minuscule case. It disproves \cite[Conj.~2 (2)]{Far19}, where Fargues conjectured that the dimension of a Harder-Narasimhan stratum for some $[b']\in B(G,\mu)$ should coincide with the dimension of the Newton stratum for some $[b']^*\in B(G,-\mu)$ where we refer to the next section for the notation. In Proposition \ref{propdim2} we prove that the dimension of a Newton stratum is an upper bound for the dimension of the corresponding Harder-Narasimhan stratum and determine the cases in which equality holds.\\ 

{\it Acknowledgement.} The second author thanks Laurent Fargues and Michael Rapoport for helpful conversations. We thank Miaofen Chen and Ian Gleason for pointing out an error in a previous version.
	
\section*{Notation}
We use the following notation: 
\begin{itemize}
\item $F$ is a finite degree extension of $ \Q_p $ and $\pi$ is a uniformiser of $\mathcal{O}_F$. Let $ \overline{F} $ be an algebraic closure of $ F $ and $ \Gamma = \Gal(\overline{F}|F) $.
\item $ \breve F $ is the completion of the maximal unramified extension of $F$ with Frobenius $ \sigma $. 
\item $ G $ is a connected reductive group over $F$. Let $H$ be a quasi-split inner form of $G$ and fix an inner twisting $ G_{\breve F} \overset{\sim}{\longrightarrow} H_{\breve F} $. 
\item  $ A \subseteq T \subseteq B $ where $ A $ is a maximal split torus, $ T = Z_H(A) $ is the centralizer of $ A $ in $T$ and $ B $ is a Borel subgroup in $H$. Let $ U $ be its unipotent radical.
\item $ (X^*(T), \Phi, X_*(T), \Phi^{\vee}) $ is the absolute root datum of $G$ with positive roots $ \Phi^+ $ and simple roots $ \Delta $ with respect to the choice of $B$.
\item  Further, $ (X^*(A), \Phi_0, X_*(A), \Phi^{\vee}_0) $ denotes the relative root datum, with positive roots $ \Phi^+_0 $ and simple roots $ \Delta_0 $.
\item On $ X_*(A)_{\Q} $ resp.~$ X_*(T)_{\Q} $ we consider the partial order given by $ \nu \leq \nu' $ if $ \nu' - \nu $ is a non-negative rational linear combination of positive resp.~relative coroots.
%\item For $G(??)$-conjugacy classes of cocharacters $\mu$ of $G$ we often consider a dominant representative in $X_*(T)$, also denoted $\mu$. 
\item Let $ C | \overline{F} $ be an algebraically closed complete field. Let $C^{\circ}$ resp. $C^{\flat,\circ}$ be the subring of power-bounded elements of $C$ resp. $C^{\flat}$ and let $\xi$ be a generator of the kernel of the surjective map $W(C^{\flat,\circ})\rightarrow C^{\circ}$. Let $\B+:=\B+(C)$ be the $\xi$-adic completion of $W(C^{\flat,\circ})[1/p]$ and $\BdR=\BdR(C)=\B+[\xi^{-1}]$. Then $\B+\cong C\ll \xi\rr$ and $\BdR\cong C\dl \xi\dr$.
\item Let $X$ be the schematic Fargues-Fontaine curve over $C^{\flat}$. The untilt $C$ of $C^{\flat}$ corresponds to a point $ \infty \in |X|$ with residue field $C$ and $\widehat{\mathcal{O}}_{X, \infty}\cong \B+$.
\item The adic space $Y=\Spa W_{\O_F}(C^{\circ})\setminus \{[\pi]p=0\}$ is equipped with a Frobenius $ \varphi $. The quotient $ Y / \varphi^{\Z} =: X^{\ad} $ is the adic Fargues-Fontaine curve. Recall that there is an equivalence between the categories of coherent sheaves over $X$ and over $X^{\ad}$. 
\item Let $B(G)$ be the set of $G(\breve F)$-$\sigma$-conjugacy classes of elements of $G(\breve F)$. By work of Kottwitz, elements $[b]$ are classified by their Kottwitz point $\kappa_G(b)\in \pi_1(G)_{\Gamma}$ and their Newton point $\nu_b\in X_*(A)_{\mathbb Q,\dom}$.
\item For a $G$-bundle $\E$ on $X$ let $\Newt(\E)$ be the corresponding class in $B(G)$.
\item Let $[b]\in B(G)$. Due to different sign conventions for Harder-Narasimhan polygons and Newton polygons, we write $[b]^*$ for the unique element of $B(G)$ with $\kappa_G([b]^*)=-\kappa_G(b)$ and $\nu_{[b]^*}=(-\nu_b)_{\dom}$. We have $[b]^*\in B(G,-\mu)$ if and only if $[b]\in B(G,\mu)$.
\end{itemize} 	
%\mar{mu as dominant representative of its conj class ony for qsplit G: needed? introduce?}

\section{The Harder-Narasimhan formalism for modifications of vector bundles}	

\subsection{Generalities on vector bundles and gluing}

 Let $ \Bun (X) $ denote the category of vector bundles on the Fargues-Fontaine curve $X$ and recall from \cite{FF} that every object in $ \Bun (X) $ can be written as direct sum of simple objects which can be parametrized by $ \Q$. More precisely, the isomorphism classes of vector bundles of some rank $n$ on $X$ are in bijection with $B(\GL_n)$, the set of $\sigma$-conjugacy classes of elements $b \in \GL_n(\breve F)$, or with isomorphism classes of $\sigma$-isocrystals of rank $n$ over $\breve F$. Here, an isocrystal $ \mathcal{D} = (\breve F^n, \varphi_b := b \sigma ) $ corresponds to the vector bundle
\[
\mathcal{E}_b = Y \times_{\varphi^{\Z}} \mathcal{D} \longrightarrow Y / \varphi^{\Z} = X^{\ad}
\] 
over $X^{\ad}$, and to the corresponding bundle over $X$. Here, the automorphism $\varphi$ on the left hand side acts via $\varphi$ on $Y$ and as $\varphi_b$ on $\mathcal D$.

If $ f : (\breve F^{n_1}, b_1 \sigma ) \longrightarrow (\breve F^{n_2}, b_2 \sigma ) $ is a map of isocrystals, we obtain an induced morphism of vector bundles $ \mathcal{E}(f) : \mathcal{E}_{b_1} \longrightarrow \mathcal{E}_{b_2} $. Be aware that the converse does not hold.

Recall that $X$ is the Fargues-Fontaine curve over $C^{\flat}$, which comes equipped with a point $\infty$ corresponding to $C$. By Beauville-Laszlo's gluing theorem \cite{BL} we have a bijective correspondence between vector bundles $\E$ on $X$ and triples $(\E^e,\E_{\B+},\iota)$ where $\E^e$ is a vector bundle over $X\setminus \{\infty\}$, where $\E_{\B+}$ is a vector bundle on $\Spec(\B+)$ and where $\iota:\mathcal{E}^e \otimes_{B_e} \BdR\rightarrow \E_{\B+}\otimes_{\B+}\BdR$ is an isomorphism. Here, the triple corresponding to some $\E$ is given by the respective base changes of $\E$ together with the induced isomorphism.

The pullback of a vector bundle $\E_b$ via $ \Spec(\B+) \longrightarrow X $ is trivial. Indeed, the inclusion $F\hookrightarrow C\hookrightarrow \B+$ extends to an embedding of an algebraic closure $\bar F$ into $\B+$. By Lang's theorem there is a $g\in G(\bar F)$ with $gb\sigma(g^{-1})=1$, which induces the desired trivialization. It is well-defined up to the action of $G(F)$. The Beauville-Laszlo uniformization depends on the choice of such a trivialization. From now on we consider $\E_b$ together with a trivialization of $\E_{b,\B+}$, without explicitly mentioning it. If $b=1$, we choose the natural trivialization. In all cases, the trivialization of $\E_{b,\B+}$ induces a trivialization of $\E_{b,\B+}\otimes_{\B+}\BdR$ (i.e., an identification with $\BdR^n$ where $n$ is the rank) identifying $\E_{b,\B+}$ with the standard lattice $(\B+)^n$ in $\BdR^n$.

For each $ x \in \Gr_n (C) $ one can construct a modification $ \mathcal{E}_{b,x} $ of $ \mathcal{E}_b $ as follows. Using the trivialization of $\E_{b,\B+}$, we can write the triple corresponding to $\E_b$ as $(\mathcal{E}_{b | X \setminus \infty}, \E^{n,\tri}_{\B+},\iota)$ where $\E^{n,\tri}_{\B+}$ is the trivial bundle of rank $n$ on $ \Spec(\B+) $. Then $\E_{b,x}$ is given as the vector bundle corresponding to the triple $(\mathcal{E}_{b | X \setminus \infty}, \E^{n,\tri}_{\B+},\iota_x )$ where the isomorphism $\iota_x$ is given by the commutative diagram

\begin{center}
	\begin{tikzpicture}[scale = 1]
	\draw (0, 0) node { $\mathcal{E}_b^e \otimes_{B_e} \BdR $ };
	\draw (0, -2) node { $\mathcal{E}^{n, \tri}_{\B+} \otimes_{\B+} \BdR $. };
	\draw (4, 0) node { $\mathcal{E}^{n, \tri}_{\B+} \otimes_{\B+} \BdR $ };	
	\draw [->] (1.25, 0) -- (2.25, 0)node[midway, above]{$ \iota $};
	\draw [->] (0, -0.5) -- (0, -1.5)node[midway, right]{$ \iota_x $};	
	\draw [->] (0.5, -1.5) -- (3.5, -0.5)node[midway, right]{$~x $};	
	\end{tikzpicture}
\end{center}
Here, $B_e = H^0(X \setminus \infty, \mathcal{O}_X)$ and the map $x$ in the diagram is multiplication by a representative of $x$ on $\BdR^n$. The isomorphism class of the triple only depends on the lattice $x(\E^{n,\tri}_{\B+})\subset \BdR^n$ and is in particular independent of the choice of the representative. 

Write $ \Lambda_{x} := x(\mathcal{E}^{n, \tri}_{\B+}) $ and $ \mathcal{E}^{n, \tri}_{\BdR} := \mathcal{E}^{n, \tri}_{\B+} \otimes_{\B+} \BdR $. If $(k_1,\dotsc,k_n)$ with $k_1\geq \dotsm\geq k_n$ is the relative position of $\Lambda_x$ with respect to $(\B+)^n$, we call $ (k_1, \dotsc, k_n) $ the type of $x$.

\subsection{The Harder-Narasimhan formalism}\label{sec22}

We want to define a Harder-Narasimhan filtration for each pair $(b, x)$ where $b \in \GL_n(\breve F)$ and $ x \in \Gr_{n}(C)=\Gr_{\GL_n} (C) $ in such a way that it coincides with the Harder-Narasimhan filtration for the filtered isocrystal for $b$ with filtration induced by $x$ if $ x $ is of minuscule type. We will use the general Harder-Narasimhan formalism as for example in \cite{Andre}.

\begin{definition}
	\begin{enumerate}
	\item Let $ \Isoc $ be the category of $\sigma$-isocrystals over $\breve F$. Recall that each object is isomorphic to a pair $({\breve F}^n, b\sigma)$ for some $n$. From now on we write $\E_b$ for such a pair and identify it with the corresponding vector bundle on $X$. However, for two vector bundles $ \mathcal{E}_{b_1} $ and $ \mathcal{E}_{b_2} $ we set $$ \hom_{\Isoc} ( \mathcal{E}_{b_1}, \mathcal{E}_{b_2} ) = \{ \mathcal{E}(f) \mid f \in \hom ( (\breve F^{n_1}, b_1 \sigma), (\breve F^{n_2}, b_2  \sigma )) \},$$ the set of morphisms of isocrystals. In particular $ \Isoc $ is abelian and every exact sequence splits.
	\item Let $\Isoc^{\bullet} $ be the category whose objects are pairs ${}^{\bullet} \E$ consisting of an element $ \mathcal{E}$ in $\Isoc $ together with a filtration $ \Fil^{\bullet}\mathcal{E} $ in $\Isoc $ and whose morphisms are morphisms in $ \Isoc $ that are strictly compatible with the filtrations.
	
\item Let $\mathcal{E}_{b_1}$, resp.~$\mathcal{E}_{b_2}$ be vector bundles of rank $n$ resp.~$m$ and let $ f : \mathcal{E}_{b_1} \longrightarrow \mathcal{E}_{b_2} $ be a morphism in $ \Isoc $. Let $ x_1 \in \Gr_n (C) $ and  $x_2 \in \Gr_m (C) $. Then we have the associated $ \B+ $-lattices $ \Lambda_{x_1} $ resp.~$ \Lambda_{x_2} $ of  $\mathcal{E}_{b_1}$, resp.~$\mathcal{E}_{b_2}$. The morphism $f$ is effective with respect to $x_1$ and $x_2$ if $ f( {\Lambda}_{x_1} ) \subset {\Lambda}_{x_2} $. 

\item Using the same notation let $\tilde{\Lambda}_{x_2}$ be the intersection of $\Lambda_{x_2}$ with the isocrystal corresponding to $f(\mathcal{E}_{b_1})$. Then $f$ is strict effective if $ f( {\Lambda}_{x_1} ) = \tilde{\Lambda}_{x_2} $. 

\item We define a category $ \mathcal{C} $ whose objects are pairs $(\mathcal{E}, x)$ where $\mathcal{E}$ is a vector bundle on $X$ of some rank $n$ and where $ x \in  \Gr_n (C)$. A morphism from $(\mathcal{E}_{b_1}, x_1)$ to $(\mathcal{E}_{b_2}, x_2)$ is a morphism $ g \in \hom_{\Isoc} ( \mathcal{E}_{b_1}, \mathcal{E}_{b_2} ) $ that is effective with respect to $ x_1 $ and $x_2$. 	
\end{enumerate}
\end{definition}

\begin{remark}
The class of strict effective epimorphisms in $ \mathcal{C} $ is stable under pull-backs in $\mathcal C$ and that of strict effective monomorphisms is stable under push-forward. Moreover the category $ \mathcal{C} $ is additive and has kernels and cokernels. Indeed, let $g$ be in $\hom_{\mathcal{C}} ( (\mathcal{E}_{b_1}, x_1), (\mathcal{E}_{b_2}, x_2))$. Since the category $\Isoc$ has kernels, there exists $ (\mathcal{E}_{b'_1}, f)$, the kernel of $g$ in that category. Suppose that $ \rank (\mathcal{E}_{b'_1}) = \ell$ is not zero then $f$ induces an injective map $ \tilde{f} : \mathcal{E}_{b'_1}^e \otimes_{B_e} \BdR \longrightarrow \mathcal{E}_{b_1}^e \otimes_{B_e} \BdR $. Denote $ \Lambda_{x'_1} := \tilde{f}^{-1}(\Lambda_{x_1}) $ for some $ x'_1 \in \Gr_{\ell}(C) $. Then $ ((\mathcal{E}_{b'_1}, x'_1), f) $ is the kernel of $g$ in $\mathcal{C}$. Similarly, the category $\mathcal{C}$ also has cokernels. Therefore $ \mathcal{C} $ is an exact category whose exact sequences are exact sequences in $ \Isoc $ with strict effective morphisms.
\end{remark}

\begin{remark}\label{remexsqC}
We consider the forgetful functor
\begin{align*}
	\mathcal{F}ib : \mathcal{C} \quad &\longrightarrow \Isoc \\
	(\mathcal{E}_b, x) &\longmapsto \mathcal{E}_b.
\end{align*}
It is clearly exact and faithful.

Consider an exact sequence $$0\rightarrow (\E',x')\rightarrow (\E,x)\rightarrow (\E'',x'')\rightarrow 0$$ in $\mathcal C$. Let $\Lambda'$, $\Lambda$ and $\Lambda''$ be the associated lattices. Then $\Lambda'=\Lambda\cap \E^{'{\rm tri}}_{\BdR}$ and $\Lambda''$ is the image of $\Lambda$ in $\E^{''{\rm tri}}_{\BdR}$. In particular, we obtain an exact sequence 
\begin{equation}\label{eqexsq2}
0\rightarrow \E'_{x'}\rightarrow \E_{x}\rightarrow \E''_{x''}\rightarrow 0
\end{equation}
of vector bundles on $X$. Notice that in general the morphisms are not morphisms of the associated isocrystals. This filtration of $\E_x$ corresponds to the $\mathcal C$-filtration $\E'\hookrightarrow \E$ in the manner also explained in different terms in \cite[Lemma 2.4]{CFS}.

The above consideration also shows that $\mathcal Fib$ induces a bijection between the strict subobjects of $(\mathcal{E}_b, x)$ and the subobjects of $ \mathcal{E}_b $ in Isoc.% where a strict subobject is a subobject that can be inserted into an exact sequence.???E: isn't this the same as being a strict mono? That's why I deleted it.
\end{remark}

\begin{definition}
We consider the following rank, degree and slope functions on the category $\mathcal C$. For an object $(\E,x)$ let
\begin{align*}
	\rk ( \mathcal{E}, x ) &= \rank \mathcal{E}= \rank \E_x \\
	\deg ( \mathcal{E}, x ) &= \deg\mathcal{E} - \deg (x) =\deg \E_x\\
	\mu ( \mathcal{E}, x ) &= \dfrac{\deg ( \mathcal{E}, x )}{\rk ( \mathcal{E}, x )}
\end{align*}
where $ \deg (x) =\val_{\xi} (\det( \overline{x}))$ for any representative $\overline x$ of $x$ in $ \GL_n(\BdR) $. 
\end{definition}
\begin{lemma}
$\rk ( \mathcal{E}, x )$ and $\deg ( \mathcal{E}, x )$ are additive on exact sequences in $\mathcal C$.
\end{lemma}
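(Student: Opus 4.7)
The plan is to handle the two statements separately. Additivity of rank is immediate, since $\rk(\E,x)=\rank\E$ depends only on the underlying bundle and rank is additive on short exact sequences of vector bundles on $X$. For degree, I would decompose $\deg(\E,x)=\deg\E-\deg(x)$ and treat each summand in turn. Additivity of $\deg\E$ on the underlying exact sequence $0\to\E'\to\E\to\E''\to 0$ is the classical additivity of the degree of vector bundles on the Fargues--Fontaine curve, so the actual content is to prove $\deg(x)=\deg(x')+\deg(x'')$.

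For this, I would combine two short exact sequences of free $\B+$-modules. On the one hand, Remark \ref{remexsqC} supplies the sequence of lattices
\[
0\to\iota_{\BdR}(\Lambda_{x'})\to\Lambda_{x}\to\Lambda_{x''}\to 0.
\]
On the other hand, pullback to $\Spec\B+$ of the exact sequence of bundles yields $0\to\E'_{\B+}\to\E_{\B+}\to\E''_{\B+}\to 0$, a short exact sequence of free $\B+$-modules, identified via the fixed trivializations with the standard lattices inside the ambient $\BdR$-vector spaces. Since $\B+$ is a discrete valuation ring and both $\E''_{\B+}$ and $\Lambda_{x''}$ are free, both sequences split, and I can pick compatible adapted bases: a $\B+$-basis $(e_1,\dots,e_n)$ of $\E_{\B+}$ whose first $n'$ vectors form a basis of the image of $\iota_{\B+}$, and a $\B+$-basis $(f_1,\dots,f_n)$ of $\Lambda_x$ whose first $n'$ vectors span $\iota_{\BdR}(\Lambda_{x'})$.

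In these bases, the matrix $\bar x\in\GL_n(\BdR)$ sending each $e_i$ to $f_i$ is a representative of $x$ and automatically takes block upper-triangular form with diagonal blocks $\bar{x}'\in\GL_{n'}(\BdR)$ and $\bar{x}''\in\GL_{n''}(\BdR)$. The strict effectivity conditions on $\iota$ and on the quotient map force $\bar{x}'$ to send $(\B+)^{n'}$ onto $\Lambda_{x'}$ and $\bar{x}''$ to send $(\B+)^{n''}$ onto $\Lambda_{x''}$, so that $\bar{x}'$ and $\bar{x}''$ are representatives of $x'$ and $x''$. Taking determinants then yields $\det\bar x=\det\bar{x}'\cdot\det\bar{x}''$, and applying $\val_\xi$ gives $\deg(x)=\deg(x')+\deg(x'')$, as required. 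The main point to verify is the last compatibility between the block-diagonal pieces and the representatives of $x'$, $x''$, which is exactly the content of the strict effectivity condition recorded in Remark \ref{remexsqC}; the rest is routine linear algebra over the discrete valuation ring $\B+$.
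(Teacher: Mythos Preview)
Your proof is correct but takes a different route from the paper. The paper exploits the second description $\deg(\E,x)=\deg\E_x$ given in the definition, together with the exact sequence \eqref{eqexsq2} of modified bundles $0\to\E'_{x'}\to\E_x\to\E''_{x''}\to 0$ supplied by Remark~\ref{remexsqC}; additivity of both rank and degree then follows in one line from the standard additivity of rank and degree for vector bundles on $X$. You instead unpack the formula $\deg(\E,x)=\deg\E-\deg(x)$ and prove $\deg(x)=\deg(x')+\deg(x'')$ by a direct lattice computation over the discrete valuation ring $\B+$. Your approach is more elementary in that it avoids invoking the degree of the modification $\E_x$ and the exactness of \eqref{eqexsq2}, at the cost of some explicit linear algebra with adapted bases; the paper's argument is shorter and more conceptual, but leans on having already established the exact sequence of modified bundles in Remark~\ref{remexsqC}.
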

\begin{proof}
The functions $\rk \E $  and $\deg \mathcal{E} $ are additive for all exact sequences in the category of vector bundles on $X$, compare \cite[Section 5.5.2.1; Thm. 6.5.2]{FF}. Applying this to the exact sequence in \eqref{eqexsq2}, we also obtain additivity of the rank and degree function above.
\end{proof}

\begin{remark}
Suppose that there is a morphism $ g : ( \mathcal{E}, x ) \longrightarrow ( \mathcal{E}', x' ) $ such that $\mathcal{F}ib ( g) $ is an isomorphism in $ \Isoc $. Then $ \deg \mathcal{E} = \deg \mathcal{E}' $. Because $g$ is effective, $ \deg(x) \geq \deg (x') $, thus $ \deg ( \mathcal{E}, x ) \leq \deg ( \mathcal{E}', x' ) $. Equality holds if and only if $g$ is strictly effective, in which case $g^{-1}$ is also an effective morphism such that $\mathcal{F}ib ( g^{-1}) $ is an isomorphism. In other words, $g$ is an isomorphism in $ \mathcal{C} $ if and only if it is strictly effective and $\mathcal{F}ib ( g) $ is an isomorphism in $ \Isoc $.
\end{remark}

The above properties of the functor $ \mathcal{F}ib $ and the functions $ \deg, \rk, \mu $ allows us to apply the Harder-Narasimhan formalism to the category $ \mathcal{C} $ (\cite[Proposition 4.2.2]{Andre}, \cite[section 5.5.1]{FF}). We thus immediately obtain the following results.

Recall that $\X=(\E,x)\in \mathcal C$ is called semi-stable if for every sub-object $\X'=(\E',x')$ we have $\mu(\X')\leq \mu(\X)$.
\begin{Proposition}
Every object $\X$ has a unique filtration in $ \mathcal{C} $
\begin{equation}\label{eqhnfilt}
0 = \X_0 \subsetneq \X_1 \subsetneq \cdots \subsetneq \X_r = \X
\end{equation}
such that
\begin{itemize}
	\item[$\bullet$] $ \X_i / \X_{i-1} $ is semi-stable for each $ 1 \leq i \leq r $,
	\item[$\bullet$] the sequence $(\mu( \X_i / \X_{i-1} ))_{1 \leq i \leq r}$ is strictly decreasing.
\end{itemize}	
We call \eqref{eqhnfilt} the Harder-Narasimhan filtration of $\X$.
\end{Proposition}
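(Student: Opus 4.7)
The plan is to obtain both existence and uniqueness as a formal consequence of the general Harder--Narasimhan formalism of \cite[Prop.~4.2.2]{Andre} (equivalently, \cite[Section~5.5.1]{FF}), applied to the exact category $\mathcal{C}$ equipped with the slope function $\mu = \deg/\rk$. Thus the task reduces to verifying the hypotheses of that formalism for $\mathcal{C}$.

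First I would collect the verifications already made above: $\mathcal{C}$ is an exact category, $\mathcal{F}ib : \mathcal{C} \to \Isoc$ is exact and faithful, the functions $\rk$ and $\deg$ are additive on exact sequences in $\mathcal{C}$ (by the preceding lemma), $\rk$ takes positive integer values on nonzero objects, and by the characterization of isomorphisms in $\mathcal{C}$ any morphism $g : (\mathcal{E}_1, x_1) \to (\mathcal{E}_2, x_2)$ whose image under $\mathcal{F}ib$ is an isomorphism in $\Isoc$ satisfies $\deg(\mathcal{E}_1, x_1) \leq \deg(\mathcal{E}_2, x_2)$, with equality if and only if $g$ is an isomorphism in $\mathcal{C}$. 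This is precisely the form of the slope axioms needed to set up the formalism.

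The step I would expect to be the main obstacle is the noetherian/boundedness hypothesis: one needs, for each object $\X = (\mathcal{E}, x)$, that the slopes $\mu(\X')$ of strict subobjects $\X' \subseteq \X$ are bounded above, so that a maximal destabilizing subobject is guaranteed to exist. Here I would invoke the bijection recorded at the end of Remark~\ref{remexsqC} between strict subobjects of $\X$ and sub-isocrystals of $\mathcal{E}$: once a sub-isocrystal $\mathcal{E}' \hookrightarrow \mathcal{E}$ is fixed, the companion lattice is forced to be $x' = \Lambda_x \cap \mathcal{E}^{'\mathrm{tri}}_{\BdR}$. Since $\Isoc$ is semi-simple and $\mathcal{E}$ has finite rank, there are only finitely many isomorphism classes of sub-isocrystals of $\mathcal{E}$; hence the set of slopes $\mu(\X')$ is finite, and a fortiori bounded. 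With this in hand, André's formalism produces a maximal destabilizing strict subobject $\X_1 \subseteq \X$; induction on $\rk \X$ applied to $\X / \X_1$ then delivers the filtration, and the standard argument (any semi-stable subobject whose slope strictly exceeds that of the next graded piece must be absorbed into the current step) gives uniqueness.
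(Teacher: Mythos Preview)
Your approach is exactly the paper's: the proposition is stated there as an immediate consequence of the general Harder--Narasimhan formalism of \cite[Prop.~4.2.2]{Andre} and \cite[Section~5.5.1]{FF}, once the preceding properties of $\mathcal{F}ib$, $\rk$, and $\deg$ have been recorded. Your write-up is in fact more explicit than the paper's one-line invocation.

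One small imprecision in your boundedness step: finitely many \emph{isomorphism classes} of sub-isocrystals of $\mathcal{E}$ does not by itself yield finitely many slopes $\mu(\X')$, because two distinct sub-isocrystals $\mathcal{E}',\mathcal{E}''\subseteq \mathcal{E}$ in the same isomorphism class will in general receive different induced lattices $\Lambda_x\cap \mathcal{E}^{\prime,\tri}_{\BdR}$ and hence different degrees. What does work is that if $x$ has type $(k_1,\dotsc,k_n)$ then $\xi^{k_1}(\B+)^n\subseteq \Lambda_x\subseteq \xi^{k_n}(\B+)^n$, so for any sub-isocrystal of rank $m$ one has $mk_n\leq \deg(x')\leq mk_1$; together with the finitely many possible values of $\deg\mathcal{E}'$ this gives the required bound (indeed finiteness) on the set of slopes. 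With this correction your argument is complete and matches the paper's.
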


We write $ \X^{\geq \lambda } :=  \X_m $ where $ \mu( \X_m / \X_{m-1} ) \geq \lambda > \mu( \X_{m+1} / \X_m )$ and $ \X^{> \lambda } :=  \X_k $ where $ \mu( \X_k / \X_{k-1} ) > \lambda \geq \mu( \X_{k+1} / \X_k)$. Denote by $ \mathcal{C}_{\lambda} $ the full subcategory of $ \mathcal{C} $ whose objects are semi-stable objects of slope $ \lambda $ in $ \mathcal{C} $. Then by the general properties of the \hn formalism, this category is abelian and stable under extensions in $ \mathcal{C} $ (\cite[theo. 5.5.4]{FF}). 

\begin{definition}\label{polygon} 
 Let $ \X \in \mathcal{C} $ and let $ \mathcal{F}( \X ) : 0 = \X_0 \subsetneq \X_1 \subsetneq \cdots \subsetneq \X_r = \X $ be a filtration in $ \mathcal{C} $. The polygon $ P\mathcal{F}( \X ) $ attached to this filtration is the graph of the piecewise linear function defined on $[0, \rk \X]$ such that on the interval $ [\rk \X_i, \rk \X_{i+1}] $ for $i = 0, 1, \dotsc, r-1$, it is the linear function relating the points 
 \[
 ( \rk \X_i, \deg \X_i ) \quad \text{and} \quad ( \rk \X_{i+1}, \deg \X_{i+1} ).
 \]
 
 The Harder-Narasimhan polygon $ \HN(\X) $ attached to an object $ \X $ is the polygon attached to the Harder-Narasimhan filtration of $\X$.
 
 We note that the set of points below the Harder-Narasimhan polygon $ \HN(\X) $ is also the convex hull of the points with coordinates $( \rk \X_i,\deg \X_i )$. This does not hold for the polygon attached to an arbitrary filtration.
\end{definition}

We further have the following comparison theorem.
\begin{prop}[{\cite[Proposition 4.4.4]{Andre}}]\label{itm : comparison GLn}
Let $ \mathcal{F}(\X) = ( 0 = \X_0 \subsetneq \X_1 \subsetneq \cdots \subsetneq \X_r = \X ) $ be any filtration by strict subobjects in $ \mathcal{C} $. Then $ P\mathcal{F}( \X ) \leq \HN(\X)$. If equality occurs, then the filtration $\mathcal{F}( \X )$ is a refinement of the Harder-Narasimhan filtration of $\X$ and $ \X_i / \X_{i-1} $ is semi-stable for each $ 1 \leq i \leq r $.
\end{prop}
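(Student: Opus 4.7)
The plan is to invoke the general Harder-Narasimhan formalism of \cite[Prop.~4.4.4]{Andre}, whose hypotheses we have just verified for $\mathcal{C}$: it is an exact category with additive rank and degree functions and a faithful exact forgetful functor $\mathcal{F}ib$ to the semisimple category $\Isoc$ that identifies strict sub-objects of $(\mathcal{E}, x)$ with sub-isocrystals of $\mathcal{E}$, by Remark \ref{remexsqC}. The argument then decomposes into two geometric steps: first, show that every strict sub-object $\X' \hookrightarrow \X$ satisfies $(\rk \X', \deg \X') \leq \HN(\X)(\rk \X')$; second, combine this with the concavity of $\HN(\X)$.

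For the first step I would proceed by induction on the length of the Harder-Narasimhan filtration of $\X$. Let $\X_{\rm max} = \X^{\geq \mu_1}$ be the maximal destabilising strict sub-object, semi-stable of maximal slope $\mu_1$. For an arbitrary strict sub-object $\X' \hookrightarrow \X$, form in $\mathcal{C}$ the exact sequence
\begin{equation*}
0 \longrightarrow \X' \cap \X_{\rm max} \longrightarrow \X' \longrightarrow \X'' \longrightarrow 0
\end{equation*}
with $\X'' \hookrightarrow \X/\X_{\rm max}$. These intersections and quotients exist in $\mathcal{C}$ because $\mathcal{F}ib$ is exact and faithful, so one can form them in $\Isoc$ and transport back. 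By additivity of $\rk$ and $\deg$, the point $(\rk \X', \deg \X')$ is the sum of $(\rk(\X' \cap \X_{\rm max}), \deg(\X' \cap \X_{\rm max}))$ and $(\rk \X'', \deg \X'')$. The first summand lies on or below the line of slope $\mu_1$ through the origin, since $\X_{\rm max}$ is semi-stable; the second lies on or below $\HN(\X/\X_{\rm max})$ by induction. A direct computation, using that $\HN(\X)$ consists of an initial segment of slope $\mu_1$ of length $\rk \X_{\rm max}$ followed by a translate of $\HN(\X/\X_{\rm max})$, together with the fact that every slope appearing later is strictly less than $\mu_1$, shows that the sum lies on or below $\HN(\X)$.

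The inequality $P\mathcal{F}(\X) \leq \HN(\X)$ then follows at once: the vertices $(\rk \X_i, \deg \X_i)$ of $P\mathcal{F}(\X)$ lie on or below $\HN(\X)$ by the previous step, and since $\HN(\X)$ is concave (its slopes are strictly decreasing), any chord between two points below a concave graph stays below it. For the equality case, $P\mathcal{F}(\X) = \HN(\X)$ forces each vertex of $P\mathcal{F}(\X)$ to lie on $\HN(\X)$. Tracking equalities in the induction shows that a strict sub-object whose point lies on $\HN(\X)$ at rank $r$ must equal the unique step of the Harder-Narasimhan filtration of rank $r$: saturating the initial $\mu_1$-slope contribution forces the sub-object to contain $\X_{\rm max}$, and one continues inductively on $\X/\X_{\rm max}$. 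Hence every $\X_i$ appears in the Harder-Narasimhan filtration, so $\mathcal{F}(\X)$ is a refinement, and each quotient $\X_i/\X_{i-1}$ is a sub-quotient of a single semi-stable graded piece of the HN filtration, hence semi-stable.

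The main obstacle is conceptual rather than computational, and has been set up in the earlier part of this section: one must verify that $\mathcal{C}$ genuinely supports the HN formalism, meaning that strict sub-objects interact correctly with intersections, kernels and cokernels, that the degree is additive on strict exact sequences, and that a maximal destabiliser exists (the latter reducing via the HN formalism to boundedness of slopes, which is ensured by the decomposition of $\mathcal{E}_b$ into isoclinic summands in $\Isoc$ and the explicit formula for $\deg$ in terms of the type of $x$). Once these ingredients are in place via the forgetful functor $\mathcal{F}ib$, the argument is the standard induction in \cite[Sec.~4]{Andre}.
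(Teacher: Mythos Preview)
Your treatment of the inequality $P\mathcal{F}(\X) \leq \HN(\X)$ is fine and essentially reproduces the standard induction behind \cite[Prop.~4.4.4]{Andre}; the paper simply cites this without further comment.

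For the equality case, your argument contains a genuine slip. You assert that ``a strict sub-object whose point lies on $\HN(\X)$ at rank $r$ must equal the unique step of the Harder-Narasimhan filtration of rank $r$,'' but in general there is no such step: if $r$ lies strictly between the ranks of two consecutive HN break points, the point $(\rk \X_i, \deg \X_i)$ sits on an open segment of the polygon, and $\X_i$ is then sandwiched strictly between two HN steps rather than equal to one. Correspondingly, your conclusion ``every $\X_i$ appears in the Harder-Narasimhan filtration'' is the wrong direction: refinement means that every HN step appears among the $\X_i$, i.e.\ $\mathcal{F}(\X)$ has \emph{more} steps than the HN filtration, not fewer. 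Your inductive scheme can be repaired (one shows that each $\X_i$ with $\rk \X_i \geq \rk \X_{\rm max}$ contains $\X_{\rm max}$, and each with $\rk \X_i \leq \rk \X_{\rm max}$ is contained in it, then recurses), but as written the conclusion does not follow.

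The paper bypasses the induction entirely and gives a shorter argument. Writing the HN filtration as $0 = Y_0 \subsetneq \cdots \subsetneq Y_m = \X$, the equality $P\mathcal{F}(\X) = \HN(\X)$ forces the existence of indices $0 < j_1 < \cdots < j_m = r$ with $(\rk \X_{j_i}, \deg \X_{j_i}) = (\rk Y_i, \deg Y_i)$. One then shows each $\X_{j_i}/\X_{j_{i-1}}$ is semi-stable directly from maximality: a destabilising strict sub-object would allow one to insert an extra step and produce a filtration whose polygon strictly exceeds $\HN(\X)$, contradicting the first assertion. Uniqueness of the HN filtration then gives $\X_{j_i} = Y_i$, establishing the refinement; semi-stability of each $\X_i/\X_{i-1}$ follows since it is a sub-quotient, of the same slope, of the semi-stable piece $Y_k/Y_{k-1}$ containing it.
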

Here the partial order on the polygons is as usual defined via Mazur's inequality. In other words, $ P\mathcal{F}( \X )$ lies on or below $\HN(\X)$ with the same end points $ (0, 0) $ and $ (\rk \X, \deg \X) $.

\begin{proof}
	The first assertion is Proposition 4.4.4 in \cite{Andre}. Write the Harder-Narasimhan filtration of $ \X $ as  $0 = Y_0 \subsetneq Y_1 \subsetneq \cdots \subsetneq Y_m = \X$ and $P\mathcal{F}( \X ) = \HN(\X)$. Thus by definition of $P\mathcal{F}( \X )$, there exist the indices $ 0 < j_1 < \cdots < j_m \leq r $ such that $ \rk Y_i = \rk \X_{j_i} $ and $ \deg Y_i = \deg \X_{j_i} $ for $ 0 < i \leq m $. In particular, $ \mu (\X_{j_i} / \X_{j_{i-1}} ) = \mu ( Y_i / Y_{i-1} ) $. Since the polygon $P\mathcal{F}( \X )$ is maximal among the polygons associated with filtrations of $\X$, we see that $\X_{j_i} / \X_{j_{i-1}}$ is semi-stable. Indeed, if this is not true then there exists a strict sub-object $ Y $ such that $ \mu (Y / \X_{j_{i-1}}) > \mu(\X_{j_i} / \X_{j_{i-1}}) $. Denote by $\mathcal{F}'(\X)$ the filtration $ 0 = \X_0 \subsetneq \X_1 \subsetneq \cdots \subsetneq \X_{j_{i-1}} \subsetneq Y \subsetneq \X_{j_i} \subsetneq \cdots  \subsetneq \X_r = \X $. Then the polygon $ P\mathcal{F}'(\X) $ is strictly greater than $ P\mathcal{F}(\X) $, a contradiction. By the uniqueness of the Harder-Narasimhan filtration, we conclude that $0 = \X_0 \subsetneq \X_{j_1} \subsetneq \cdots \subsetneq \X_{j_m} = \X$ is the Harder-Narasimhan filtration of $\X$. %By the maximality of the polygon $P\mathcal{F}(\X)$, one can see that $ \X_i / \X_{i-1} $ is semi-stable for each $ 1 \leq i \leq r $.
\end{proof}

%\begin{remark}
%	\begin{enumerate}
%		\item Since $ \hom_{\Bun (X)} (\mathcal{E}_{\lambda}, \mathcal{E}_{\lambda} ) = H^0 (\mathcal{E}_0) = F $ (\cite[page 284, point 2]{FF}) then the category $ \Isoc $ is in fact equivalent to the category of isocrystals over $F$. If $x$ is of minuscule type then by the identification $ ( \B+ )^n / ( \xi \B+ )^n \cong C^n $, we see that $x$ gives rise to a filtration $ (\mathcal{F}il)_x $ of a $n$-dimensional $C$-vector space. Moreover, $ \deg (x) = \deg (\mathcal{F}il)_x $ (see \cite[Definition 1.1.7, 8.1.4]{DOR}), therefore in this case, our Harder-Narasimhan filtration and the Harder-Narasimhan filtration in the sense of \cite[Proposition 8.1.10]{DOR} coincides.
		 
%	\end{enumerate}	  	
%\end{remark}

\section{The Harder-Narasimhan formalism for modifications of $G$-bundles}\label{secHNG}

\subsection{Construction}\label{secTannaka}

In this section we use the Tannaka formalism to extend the above Harder-Narasimhan theory to $G$-bundles for any reductive group $G$ over $F$, under the assumption that the bundle $\E_b$ is trivial. In Section \ref{seccompat}, we explain how this implies the analogous theory for all semi-stable $G$-bundles $\E_b$. For all applications, the case of modifications of the trivial (or a semi-stable) $\E_b$ is by far the most relevant one. Nevertheless, it is a natural question if one can extend the theory also to modifications of general $G$-bundles $\E_b$.

As a preparation and a main step in the proof, we first show that the Harder-Narasimhan filtration for modifications of vector bundles is compatible with direct sums and tensor products. 

Denote by $ \overline{\mathcal{C}} $ the full sub-category of $ \mathcal{C} $ whose objects are pairs $ (\mathcal{E}, x) $ where $ \mathcal{E} $ is the trivial vector bundle of some rank $n$ and where $x\in \Gr_n(C)$. Note that if $(\mathcal{E}, x)$ is an object in $ \overline{\mathcal{C}} $, then so are its sub-objects. If $( \mathcal{E}, x ), ( \mathcal{E}', x' )$ are objects in $ \overline{\mathcal{C}} $, then $ (\mathcal{E} \oplus \mathcal{E}', x \oplus x') $ and $ (\mathcal{E} \otimes \mathcal{E}', x \otimes x') $ are still objects in $\overline{\mathcal{C}}$. 

\begin{prop}
Let $( \mathcal{E}, x ), ( \mathcal{E}', x' )\in \overline{\mathcal{C}} $. Then for all $\lambda\in \mathbb Q$ we have canonical functorial isomorphisms
\begin{enumerate}
\item $(\mathcal{E} \oplus \mathcal{E}', x \oplus x')^{\geq \lambda}\cong ( \mathcal{E}, x )^{\geq \lambda}\oplus ( \mathcal{E}', x' )^{\geq \lambda}$
\item $\big( (\mathcal{E}, x ) \otimes (\mathcal{E}', x')\big)^{\geq \lambda} \cong \sum_{\lambda_1 + \lambda_2 = \lambda} (\mathcal{E}, x)^{\geq \lambda_1} \otimes (\mathcal{E}', x')^{\geq \lambda_2}.$
\end{enumerate}
\end{prop}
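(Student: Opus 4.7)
The plan is to reduce both statements to two preservation properties for semi-stability in $\overline{\mathcal C}$: (a) a direct sum of semi-stable objects of slope $\lambda$ is semi-stable of slope $\lambda$; (b) a tensor product of semi-stable objects of slopes $\lambda_1$ and $\lambda_2$ is semi-stable of slope $\lambda_1+\lambda_2$. Granting (a) and (b), both isomorphisms follow by exhibiting a filtration of the left-hand side whose graded pieces are semi-stable with strictly decreasing slopes; uniqueness of the Harder-Narasimhan filtration (Proposition \ref{itm : comparison GLn}) then identifies this filtration with the HN filtration of the direct sum, resp.\ tensor product, and the asserted formula for the $\geq\lambda$-part falls out.

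For the direct sum case, I would merge the slope sets of the HN filtrations of $(\E,x)$ and $(\E',x')$ into a single strictly decreasing list $\nu_1>\nu_2>\dotsb$ and consider the filtration $F^{\geq\nu_i}:=(\E,x)^{\geq\nu_i}\oplus(\E',x')^{\geq\nu_i}$. Its graded piece at $\nu_i$ is a direct sum of two semi-stable objects of slope $\nu_i$ (possibly with one term zero), hence semi-stable by (a). For the tensor product case, I would consider the ``convolution'' filtration $F^{\geq\lambda}:=\sum_{\lambda_1+\lambda_2\geq\lambda}(\E,x)^{\geq\lambda_1}\otimes(\E',x')^{\geq\lambda_2}$; its graded piece at $\lambda$ is $\bigoplus_{\lambda_1+\lambda_2=\lambda}\mathrm{gr}^{\lambda_1}(\E,x)\otimes\mathrm{gr}^{\lambda_2}(\E',x')$, which by (b) is a direct sum of semi-stables of slope $\lambda$, and hence semi-stable by (a).

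Property (a) should be elementary. Given a strict subobject $(\mathcal F,y)$ of a direct sum of two semi-stables of slope $\lambda$, the intersection with the first factor and projection onto the second give, via Remark \ref{remexsqC}, an exact sequence
\[0\longrightarrow(\mathcal F\cap\E_1,y_1)\longrightarrow(\mathcal F,y)\longrightarrow(\mathrm{pr}_{\E_2}\mathcal F,y_2)\longrightarrow 0\]
in $\mathcal C$ whose outer terms are strict subobjects of $(\E_1,x_1)$ and $(\E_2,x_2)$. Additivity of rank and degree on exact sequences in $\mathcal C$, combined with the slope-$\lambda$ semi-stability of the two factors, then forces $\mu(\mathcal F,y)\leq\lambda$, proving (a).

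Property (b) is the main obstacle. The clean route is to observe that an object of $\overline{\mathcal C}$ is the same datum as a pair $(V,\Lambda)$ with $V$ a finite-dimensional $F$-vector space and $\Lambda$ a $\B+$-lattice in $V\otimes_F\BdR$ -- exactly the $b=1$ specialisation of a Breuil-Kisin-Fargues module -- and that our Harder-Narasimhan formalism agrees in this case with the Fargues filtration of Cornut and Peche-Irissarry \cite{CP}. The tensor product compatibility of their filtration is part of the Tannakian setup developed in loc.\ cit., which yields (b). Alternatively, (b) admits a direct proof along Faltings-Totaro lines for filtered vector spaces, adapted to $\B+$-lattice data, by reducing to a slope comparison on the bifiltration $(\E,x)^{\geq\lambda_1}\otimes(\E',x')^{\geq\lambda_2}$ and extracting the worst-case subobject.
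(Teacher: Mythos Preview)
Your proposal is correct and follows essentially the same route as the paper. Both arguments reduce (2) to the claim that the tensor product of semi-stables is semi-stable, and both establish that claim by identifying $\overline{\mathcal C}$ with the category $\HT^{\BdR}$ of pairs $(V,\Lambda)$ and invoking \cite[Prop.~44]{CP}; the only difference is cosmetic---the paper cites \cite[Thm.~9.1.3]{Andre} for the reduction step where you spell out the convolution filtration explicitly, and the paper obtains your property (a) by appealing to the general fact that $\mathcal C_\lambda$ is abelian and closed under extensions rather than writing out the short exact sequence argument.
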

\begin{proof}
For every $\lambda$ the quotient $ ( \mathcal{E} \oplus \mathcal{E}', x \oplus x' )^{\geq {\lambda}} / ( \mathcal{E} \oplus \mathcal{E}', x \oplus x')^{ >{\lambda} } = (\mathcal{E}, x )^{ {\lambda} } \oplus (\mathcal{E}', x')^{{\lambda} } $ is semi-stable of slope $ {\lambda} $ (or trivial if both summands are trivial) since the category $ \mathcal{C}_{\lambda} $ is abelian and stable under extensions in $ \mathcal{C} $. This implies (1).

It remains to prove (2). By Theorem 9.1.3 in \cite{Andre}, this property is equivalent to the following claim.

{\it Claim. } If $(\mathcal{E}, x)$ resp.~$(\mathcal{E}', x')$ are semi-stable objects of slopes $ \lambda_1 $ resp.~$ \lambda_2 $ then $(\mathcal{E}, x) \otimes (\mathcal{E}', x') $ is semi-stable of slope $ \lambda_1 + \lambda_2 $.
	
To prove this claim, we reduce it to results of Cornut \cite{Cornut} and Cornut-Peche Irissarry \cite{CP} and Fargues's theorem \cite[Thm. 14.1.1]{SW17}. 
Consider the category $ \HT^{\BdR} $ of Hodge-Tate modules whose objects are pairs $(V, \Xi)$ where $V$ is a finite-dimensional $ F $-vector space and where $ \Xi \subset V \otimes_{F} \BdR $ is a $ \B+ $-lattice. A morphism $ \mathcal{F} : (V, \Xi) \longrightarrow (V', \Xi') $ is an $ F $-linear morphism $ f : V \longrightarrow V' $ whose $ \BdR $-linear extension $ f_{\BdR} : V_{\BdR} \longrightarrow V'_{\BdR} $ satisfies $ f_{\BdR} (\Xi) \subset \Xi' $.

Let $ (\mathcal{E}, x) \in \overline{\mathcal{C}} $ and let $(B^n_e, (\B+)^n, \iota)$ be the triple corresponding to $\E$ where $ \iota : B^n_e \otimes_{B_e} \BdR \overset{\sim}{\longrightarrow} (\B+)^n \otimes_{\B+} \BdR $ is the canonical isomorphism and $n$ is some natural number. Moreover, let $\Lambda_x =x(\B+)^n\subseteq (\B+)^n \otimes_{\B+} \BdR$ be the lattice associated with $x$. Furthermore, $ H^0 (\mathcal{E}) = V_{\mathcal{E}} $ is an $n$-dimensional $ F $-vector space, see \cite[Section 8.2.1.1]{FF}. Thus, we can associate a pair $(V_{\mathcal{E}}, \Lambda_x)$ with $(\mathcal{E}, x)$. 

By \cite[Section 3.2.2]{CP}, this induces an exact tensor equivalence between the tensor categories $ \HT^{\BdR} $ and $ \overline{\mathcal{C}} $. There are also degree and rank functions defined on the objects of the category $ \HT^{\BdR} $, compare \cite[Section 3.2.2]{CP}. We claim that they agree with our functions rank and degree. Indeed, $\rk(V_{\mathcal{E}}, \Lambda_x) := \dim (V_{\mathcal{E}}) = n = \rk (\mathcal{E}, x) $. Moreover, let $(k_1,\dotsc, k_n)$ be the relative position of $ \Lambda_x $ with respect to the standard lattice in $\BdR^n $. Then, $ \deg (\mathcal{E}, x) = - \sum_{i=1}^{n} k_i $ and by definition, $ \deg (V_{\mathcal{E}}, \Lambda_x) $ is the degree of the filtration $ \mathcal{F}^{\bullet} $ of $ V_C $ where for each $ \lambda \in \Z $
\[
\mathcal{F}^{\lambda} := \dfrac{(\B+)^n \cap \xi^{\lambda}\Lambda + (\xi \B+)^n}{(\xi \B+)^n} \quad \text{in} \quad V_C = \dfrac{( \B+)^n}{(\xi \B+)^n}. 
\]

Hence, $ \deg (V_{\mathcal{E}}, \Lambda_x) = -  \sum_{i=1}^{n} k_i=\deg (\mathcal{E}, x) $. Therefore, the claim follows from the analogous assertion for $ \HT^{\BdR} $, which is shown in \cite[Prop.~44]{CP}. 
\end{proof}

Let $ G $ be a reductive group and consider a pair $ (\mathcal{E}, x) $ where $ \mathcal{E} $ is the trivial $ G $-bundle on $ X $ and $ x \in \Gr_G (C) $. Here, we consider $\E$ as an exact tensor functor $P_{\mathcal{E}} : \Rep_{F} G \longrightarrow \Bun (X) $ where $ \Rep_{F} G $ is the exact tensor category of algebraic representations of $G$ on finite-dimensional $ F $-vector spaces. Recall that for a finite-dimensional $F$-vector space $V$, elements of $\Gr_{\GL_V} (C)$ are in bijection with $ \B+ $-lattices in $ V \otimes_{F} \BdR $. Thus any representation $ (V, \rho) : \Rep_{F} G \longrightarrow \GL_{V} $  gives rise to a $ \B+ $-lattice $ (\rho \otimes_{F} \BdR) (x) $ of $ V \otimes_{F} \BdR $.

\begin{definition} \label{itm : G-filtration}
	Let $G$ be a reductive group.
	\begin{enumerate}
		\item Let $K$ be a field extension of $F$. A $G$-filtration of the category $ \Vect_{K} $ is the specification of a $K$-filtration on $ V\otimes_{F} K $ of every object $ (V, \rho) \in \Rep_{F} G $ satisfying the four conditions in \cite[Def.~4.2.6]{DOR} (functoriality, compatibility with tensor products, normalization, and exactness of the associated functor to the category of graded $K$-vector spaces). 
		\item Let $ \mathcal{E} $ be a $G$-bundle. An isoc-filtration of $ \mathcal{E} $ is a tensor functor $ \prescript{\bullet}{}{\mathcal{E}} : \Rep_{F} G \longrightarrow \Isoc^{\bullet} $ such that
		\begin{itemize}
			\item[(i)] $\omega_{\text{fil}} \circ \prescript{\bullet}{}{\mathcal{E}} = P_{\mathcal{E}} $ where $ \omega_{\text{fil}} : \Isoc^{\bullet}\longrightarrow \Isoc $ forgets the filtration,
			\item[(ii)] $ \omega_{\text{iso}} \circ \prescript{\bullet}{}{\mathcal{E}} $ is a $G$-filtration of $ \Vect_{\BdR} $. Here  $ \omega_{\text{iso}} : \Isoc^{\bullet}\longrightarrow \FilVect_{\BdR} $ is the functor that maps a filtered vector bundle $ \mathcal{E}^1 \subset \cdots \subset \mathcal{E}^m $ to the corresponding filtered $ \BdR $-vector space $ \mathcal{E}^{1, \tri}_{\BdR} \subset \cdots \subset \mathcal{E}^{m, \tri}_{\BdR} $, an object of the category  $ \FilVect_{\BdR} $ of filtered $\BdR$-vector spaces.
		\end{itemize}
	\end{enumerate}
\end{definition}

\begin{remark}
If $ \E $ is the trivial $G$-bundle, then for every $ (V, \rho) \in \Rep_{F} G $, the vector bundle $ P_{\E}(V, \rho) $ is trivial of rank $ \dim_F V $. Furthermore, we denote $ \omega^F_{\text{iso}} : \Isoc^{\bullet}\longrightarrow \FilVect_{F} $ the functor that maps a filtered vector bundle $ \mathcal{E}^1 \subset \cdots \subset \mathcal{E}^m $ whose terms are trivial vector bundles to the corresponding filtered $ F $-vector space $ H^0(\E_1) \subset \cdots \subset H^0(\E_m) $. Then $ \omega^F_{\text{iso}} \circ \prescript{\bullet}{}{\mathcal{E}} $ is a $G$-filtration of $ \Vect_{F} $ and by \cite[Section 8.2.1.1]{FF}, this functor factors through $ \omega_{\text{iso}} \circ \prescript{\bullet}{}{\mathcal{E}} $ by the functor that sends a $\BdR$-vector space $V_0$ to the $F$-vector space $ V_0^{\Gal(\BdR / F)} $.
\end{remark}

\begin{thm}\label{itm : HN filtration - general reductive group}
	Let $G$ be a reductive group and let $ \mathcal{E} $ be the trivial $G$-bundle. Let $ x \in \Gr_G (C) $. Then there is a unique isoc-filtration $ \prescript{\bullet(x)}{}{\mathcal{E}} $ of $ \mathcal{E} $ such that for any $ (V , \rho) \in \Rep_{F} G $, the induced filtration $ \prescript{\bullet(x)}{}{\mathcal{E}}(V, \rho) $ on $ P_{\mathcal{E}}(V, \rho) $ is the Harder-Narasimhan filtration of the pair $ (P_{\mathcal{E}}(V, \rho), P_x(V, \rho)) $ in the category $ \mathcal{C} $.
\end{thm}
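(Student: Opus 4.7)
The plan is to invoke the Tannakian formalism, using the compatibility of the Harder-Narasimhan filtration with direct sums and tensor products established in the preceding proposition. First I would define the candidate functor $\prescript{\bullet(x)}{}{\mathcal{E}} : \Rep_F G \longrightarrow \Isoc^{\bullet}$ by sending a representation $(V,\rho)$ to the isocrystal $P_{\mathcal{E}}(V,\rho)$ equipped with the Harder-Narasimhan filtration of the pair $(P_{\mathcal{E}}(V,\rho), P_x(V,\rho)) \in \overline{\mathcal{C}}$. Uniqueness is then immediate: any isoc-filtration lifting $P_{\mathcal{E}}$ that is forced to give the HN filtration on every representation is determined on all of $\Rep_F G$. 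To verify that this assignment actually lands in $\Isoc^{\bullet}$ rather than merely in the category of filtered vector bundles, I would invoke the last sentence of Remark \ref{remexsqC}, which identifies strict sub-objects of $(\mathcal{E}, x)$ in $\mathcal{C}$ with sub-isocrystals of $\mathcal{E}$; hence the HN filtration is automatically by sub-isocrystals.

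Next I would verify that $\prescript{\bullet(x)}{}{\mathcal{E}}$ is a tensor functor on $\Rep_F G$. Functoriality follows from the functoriality of the HN formalism on the exact category $\mathcal{C}$. Compatibility with direct sums and with tensor products is exactly the content of the preceding proposition, applied slope-by-slope to the pieces $(\mathcal{E},x)^{\geq\lambda}$ to recover the full filtration. Condition (i) of Definition \ref{itm : G-filtration}(2) then holds by construction, since forgetting the filtration recovers $P_{\mathcal{E}}(V,\rho)$ on the nose.

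The remaining step is to check condition (ii), namely that $\omega_{\text{iso}} \circ \prescript{\bullet(x)}{}{\mathcal{E}}$ is a $G$-filtration of $\Vect_{\BdR}$ in the sense of Definition \ref{itm : G-filtration}(1). I would work through the four axioms of \cite[Def.~4.2.6]{DOR} in turn. Functoriality, compatibility with tensor products, and normalization on the trivial representation transfer directly from the tensor structure of $\prescript{\bullet(x)}{}{\mathcal{E}}$ combined with the exactness of $\omega_{\text{iso}}$. The exactness of the associated graded functor is where the real content sits: it uses that the graded pieces of the HN filtration in $\overline{\mathcal{C}}$ are semi-stable of a fixed slope, and that the full subcategory $\mathcal{C}_{\lambda}$ is abelian and stable under extensions in $\mathcal{C}$.

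The main obstacle would have been establishing tensor compatibility, but that is already settled by the preceding proposition (which routed through Cornut--Peche Irissarry's theory of Hodge-Tate modules). Given that input, the theorem is a formal consequence of the Tannakian package: one simply packages the HN filtrations attached to each representation into a single tensor functor and reads off the required axioms.
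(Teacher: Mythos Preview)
Your proposal is correct and follows essentially the same approach as the paper: define the functor representation-by-representation via the HN filtration, then verify the axioms of Definition~\ref{itm : G-filtration} using the compatibility with direct sums and tensor products established in the preceding proposition. The paper's proof is terser (it simply cites the analogous argument in \cite[Thm.~5.3.1]{DOR} and says ``one checks''), whereas you spell out more of the verification, but the strategy and the key input are identical.
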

\begin{proof}
	This result is proved as in the characteristic $0$ case of the proof of \cite[Thm.~5.3.1]{DOR}. Indeed, for every object $ (V, \rho) \in \Rep_{F} G $, we have an object $ \X:= ( P_{\mathcal{E}}(V, \rho), P_x(V, \rho) ) $ in the category $\overline{ \mathcal{C}} $. Consider its Harder-Narasimhan filtration
	\[
	0 = \X_0 \subset \X_1 \subset \cdots \subset \X_m = \X
	\] 
where the sequence $ \lambda_i := \mu(\X_i / \X_{i-1}) $ is decreasing. 

We then attach to this filtration the filtered vector bundle $ 0 = \mathcal{E}_0 \subset \mathcal{E}_{\lambda_1} \subset \cdots \subset \mathcal{E}_{\lambda_m} = P_{\mathcal{E}}(V, \rho) $ where $\mathcal{E}_{\lambda_i} $ is the underlying vector bundle of $\X_i$. In this way we get a functor $ \prescript{\bullet(x)}{}{\mathcal{E}} : \Rep_{F} G \longrightarrow \Isoc^{\bullet}$. One checks that this functor satisfies the required properties in Definition \ref{itm : G-filtration}. The key point is that the above Harder-Narasimhan formalism is compatible with tensor products and direct sums.
\end{proof}

%\begin{remark}
%	When $ x $ is of minuscule type $\mu$, we can identify $ \Gr_{G, \mu} (C) $ with $ G(C) / P_{\mu} (C) $. Thus the Harder-Narasimhan filtration in our setting and that in the setting of \cite[Definition 9.2.17, Theorem 9.2.18]{DOR} coincide.
%\end{remark}

\subsection{Parabolic reductions}\label{secparred}

Let $ \mathcal{E} $ be a $G$-bundle. For a parabolic subgroup $P$ of $G$, a reduction of $\E$ to $P$ is a $P$-bundle $\E_P$ with an isomorphism $\E_P\times^P G\rightarrow \E$.

\begin{const}\label{const1}
Let $\prescript{\bullet}{}{\mathcal{E}}$ be an isoc-filtration of $ \mathcal{E} $. Then by \cite[IV.2.2.5]{Saavedra} the group $P= P (\prescript{\bullet}{}{\mathcal{E}}) = \underline{\Aut}^{\otimes} (\omega_{\text{iso}} \circ \prescript{\bullet}{}{\mathcal{E}}) $ is a parabolic subgroup of $ G $ defined over $\BdR$. If $\E$ is the trivial $G$-bundle then the functor $\omega^F_{\text{iso}}$ is a fiber functor with value in the category $\Vect_F$ and $\omega_{\text{iso}}$ factors through $\omega^F_{\text{iso}}$. Thus, again by \cite[IV.2.2.5]{Saavedra}, the group $P_F = P (\prescript{\bullet}{}{\mathcal{E}}) = \underline{\Aut}^{\otimes} (\omega^F_{\text{iso}} \circ \prescript{\bullet}{}{\mathcal{E}}) $ is a parabolic subgroup of $ G $ defined over $F$ and $ P_F \otimes_F \BdR = P $.  

Consider the exact tensor functor assigning to $ (V, \rho) \in \Rep_{F} G $ the pair $ (\mathcal{E}_0(V, \rho),$ $\Fil \mathcal{E}_0(V, \rho) ) \in \Isoc^{\bullet} $ consisting of the trivial vector bundle of rank $ \dim V $ together with a filtration of trivial vector bundles such that we have $ \omega_{\text{iso}} (\prescript{\bullet}{}{\mathcal{E}} (V, \rho) ) = \omega_{\text{iso}} ( \mathcal{E}_0(V, \rho), \Fil \mathcal{E}_0(V, \rho) ) $. In this way we get an isoc-filtration $ \prescript{\bullet}{}{\mathcal{E}}_0 $ of $ \mathcal{E}_0 $ and moreover $P (\prescript{\bullet}{}{\mathcal{E}}_0)=P$. We can also view  $ \prescript{\bullet}{}{\mathcal{E}}_0$ as the functor corresponding to the trivial $P$-bundle on $X$.

Let $ \mathcal{E}_P $ be the sheaf of local isomorphisms between $\prescript{\bullet}{}{\mathcal{E}}_0$ and $ \prescript{\bullet}{}{\mathcal{E}}$. Since we have an equivalence between the category of trivial vector bundles over $X$ and the category of $ F $-vector spaces, the sheaf $ \mathcal{E}_P $ has a structure of $ P$-torsor. The $ G $-torsor $ \mathcal{E}_P \times^P G $ is indeed the sheaf of isomorphisms between $ \mathcal{E}_0$ and $ \mathcal{E}$ and thus it corresponds to the original $ G $-bundle $ \mathcal{E} $. Therefore $ \mathcal{E}_P $ is a $ P $-reduction of the $ G $-bundle $ \mathcal{E} $.

By \cite[IV.2.4]{Saavedra}, there is a cocharacter $ \v : \mathbb{G}_{m, F} \longrightarrow  P $ that splits $ \omega_{\text{iso}} \circ \prescript{\bullet}{}{\mathcal{E}} $. Further, $P$ is the parabolic subgroup associated with $v$, and thus $v$ can be seen as an element of $ X_*(Z_{M})_{\Q}^{\Gamma} $ where $ M$ is the Levi factor of $ P$ centralizing $v$.  In case that $G$ is quasi-split and that $P$ is standard, we can also view $ \v $ as a dominant element of $X_*(T)_{\Q}^{\Gamma} = X_*(A)_{\Q} $ that is central in the standard Levi subgroup $ M$ of $P$.
% (Theo. 5.3.1, 9.2.18; Def. 9.2.17, 4.2.6 in ). 
\end{const}
\begin{definition}\label{defHNG1}
	Let $ \mathcal{E} $ be a $G$-bundle. Then for each $ x \in \Gr_G (C) $, there is a parabolic subgroup $P_x := P (\prescript{\bullet(x)}{}{\mathcal{E}}) $ associated with the filtration $ \omega_{\text{iso}} \circ \prescript{\bullet(x)}{}{\mathcal{E}}$. Moreover, the cocharacter $\v_x:=\HN(\E,x)$ that splits $ \omega_{\text{iso}} \circ \prescript{\bullet(x)}{}{\mathcal{E}} $ is called the Harder-Narasimhan vector of $ x $.	
\end{definition}

\begin{remark}
Let $\E$ be a $G$-bundle on $X$, let $P$ be a parabolic subgroup of $G$, and let $\E_P$ be a reduction of $\E$ to $P$. Consider the map
\begin{align*}
	v_P : X^*(P)& \longrightarrow \Z \\
\chi & \longmapsto \deg \chi_* \E_P.
\end{align*}
Let $M$ be the Levi quotient of $P$. Then we have a bijection $X^*(P)\cong X^*(M^{{\rm ab}})$ where $M^{{\rm ab}}$ is the maximal abelian quotient of $M$ (or of $P$). Since the center of the derived group of $P$ is finite it induces a bijection $X^*(P)_{\mathbb Q}\cong X^*(Z_M)_{\mathbb Q}$. Hence $v_P$ as above can be seen as an element $v_P$ of $ X_*(Z_M)^{\Gamma}_{\Q} $. We call $ v_P $ the slope vector of $\mathcal{E}_P $.
\end{remark}
Recall that if $\E'$ is a modification of a $G$-bundle $\E$, then the isomorphism $\E'|_{X\setminus \{\infty\}}\cong \E|_{X\setminus \{\infty\}}$ induces a bijection between reductions of $\E$ to $P$ and reductions of $\E'$ to $P$.
\begin{lemma}\label{lemcompv}
Let $x\in \Gr_G(C)$. Let $\prescript{\bullet (x)}{}{\mathcal{E}}$ be the isoc-filtration of the trivial $G$-bundle $ \mathcal{E} $ corresponding to the Harder-Narasimhan filtration of $(\E,x)$, and let $P\subseteq G$ be the associated parabolic subgroup of $G$ (defined over $F$).  Let $v_1$ be the cocharacter associated with the filtration $ \omega_{\text{iso}} \circ \prescript{\bullet(x)}{}{\mathcal{E}} $. Let $\E_{x,P}$ be the reduction of $\E_x$ to $P$ associated with the reduction of $\E$ corresponding to $\prescript{\bullet(x)}{}{\mathcal{E}}$. Then the associated cocharacter $v_P$ coincides with $v_1$.
\end{lemma}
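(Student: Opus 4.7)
The plan is to show the equality of $v_1$ and $v_P$ in $X_*(Z_M)^{\Gamma}_{\mathbb{Q}}$ by pairing both sides with every character of $P$, reducing the comparison to characters of the form $\det V_n$, where $V_n$ is a $v_1$-weight space inside a representation $V$ of $G$.

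For $(V, \rho) \in \Rep_F G$, after rescaling $v_1$ to an integral cocharacter we obtain an $F$-rational $v_1$-weight decomposition $V = \bigoplus_n V_n$ in which each $V_n$ is an $M$-subrepresentation, and the filtration $V^{\geq n} := \bigoplus_{m \geq n} V_m$ is $P$-stable. By Construction \ref{const1}, the reduction $\E_P$ is built precisely so that this $P$-filtration recovers $\prescript{\bullet(x)}{}{\E}(V, \rho)$. Pushing out under $P$ and taking associated graded pieces produces a filtration of the trivial bundle $V_{\E}=P_\E(V,\rho)$ whose graded pieces are $V_n \times^M \E_M$, where $\E_M := \E_P \times^P M$. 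Modifying at $\infty$ by $x$ turns this into a filtration of $V_{\E_x}=P_{\E_x}(V,\rho)$ with graded pieces $V_n \times^M \E_{M,x}$ of rank $\dim V_n$. Theorem \ref{itm : HN filtration - general reductive group} together with Remark \ref{remexsqC} identifies this filtration of $V_{\E_x}$ with the filtration associated via \eqref{eqexsq2} to the Harder-Narasimhan filtration of $(V_{\E}, P_x(V, \rho))$ in $\mathcal{C}$; in particular, the graded piece attached to $V_n$ has slope $n$, so $\deg(V_n \times^M \E_{M,x}) = n \cdot \dim V_n$. On the other hand, $\det(V_n \times^M \E_{M,x}) = (\det V_n)_* \E_{M,x}$, whose degree equals $\langle \det V_n, v_P \rangle$ by the definition of the slope vector applied to the $P$-reduction $\E_{x,P}$ (since characters of $P$ factor through $M$). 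Therefore $\langle \det V_n, v_P \rangle = n \cdot \dim V_n$, while $\langle \det V_n, v_1 \rangle = n \cdot \dim V_n$ holds as well because $Z_M$ acts on $V_n$ only through characters of $v_1$-weight $n$. Hence $v_P$ and $v_1$ agree on every character of the form $\det V_n$.

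To finish, I will verify that these characters span $X^*(Z_M)^{\Gamma}_{\mathbb{Q}} \cong X^*(P)^{\Gamma}_{\mathbb{Q}}$. Taking a $\Gamma$-stable $G$-dominant highest weight $\lambda$ and $V = V(\lambda)$ the corresponding irreducible representation (defined over $F$ after Galois-averaging if necessary), a standard root-system computation shows that the maximal $v_1$-weight piece $V_n$ is the $M$-irreducible subrepresentation generated by the highest weight vector: any other weight of $V(\lambda)$ has the form $\lambda - \sum a_i \alpha_i$ with $a_i \geq 0$, and those involving a simple $\alpha_i$ outside the roots of $M$ pair strictly less with $v_1$ than $\lambda$ does. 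On this piece $Z_M$ acts through $\lambda|_{Z_M}$, so $\det V_n$ is a positive integer multiple of $\lambda|_{Z_M}$, and as $\lambda$ varies these restrictions span $X^*(Z_M)^{\Gamma}_{\mathbb{Q}}$, giving $v_P = v_1$. The main obstacle is this last spanning step, and in particular handling the $F$-rationality carefully and keeping track of the comparison between $X^*(P)$, $X^*(M)$, and $X^*(Z_M)$; the heart of the argument is the explicit matching between the $\mathcal{C}$-Harder-Narasimhan filtration and the $P$-reduction filtration provided by Construction \ref{const1}, Theorem \ref{itm : HN filtration - general reductive group}, and Remark \ref{remexsqC}.
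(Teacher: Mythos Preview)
Your argument is correct, and its core computation---matching the degree of each graded piece of $V_{\E_x}$ with the pairing $\langle\det V_n,\,\cdot\,\rangle$---is exactly what the paper does. The difference is organizational. The paper observes at the outset that both $v_1$ and $v_P$ are compatible with all representations $\rho:G\to\GL_V$ (for $v_1$ this is Theorem~\ref{itm : HN filtration - general reductive group}, for $v_P$ it is the evident functoriality of the slope vector under pushforward), and therefore reduces immediately to $G=\GL_n$. There the comparison is a one-line unpacking: $X^*(P)_{\Q}\cong\Z^i$ via the determinants on the Levi blocks, and both $v_1$ and $v_P$ send the $j$th determinant to $\deg(\E_j)_x-\deg(\E_{j-1})_x$, so they are both the Harder--Narasimhan polygon.

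Your version runs the same computation for general $G$, which forces you to add the spanning step at the end. That step is fine but unnecessary: once you have $\langle\det V_n,v_1\rangle=\langle\det V_n,v_P\rangle$ for \emph{every} weight piece of \emph{every} representation, applying this to a single faithful $\rho$ already gives $\rho\circ v_1=\rho\circ v_P$ in $X_*(Z_{M_V})_{\Q}$ (the $\det V_n$ visibly generate $X^*(Z_{M_V})_{\Q}$ for $\GL_V$), hence $v_1=v_P$. In other words, the reduction to $\GL_n$ is already implicit in your argument, and invoking it explicitly lets you drop the highest-weight discussion entirely. One minor point of care: the ``rescaling $v_1$ to an integral cocharacter'' is harmless but should be tracked consistently---the slope of the $n$th graded piece is the original (rational) $v_1$-weight, not the rescaled integer.
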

\begin{proof}
One can compare the two slope vectors on representations of $G$, so we may assume that $G=\GL_n$. We consider the Harder-Narasimhan filtration of $(\E,x)$ where $\E$ is the trivial vector bundle of rank $n$ on $X$. It induces a filtration in Isoc $\E_0=(0)\subsetneq \E_1\subsetneq \dotsm\subsetneq \E_i$ of $\E$. Let $P$ be the stabilizer of this filtration, a parabolic subgroup of $\GL_n$ defined over $F$. Its Levi quotient $M$ is a product of $i$ factors $\GL_{n_j}$ with $n_j=\rk \E_j-\rk \E_{j-1}$.

The Harder-Narasimhan polygon of $(\E,x)$ is the convex hull of the points $(\rk \E_j,\deg (\E_j)_x)$ for $j=0,\dotsc, i$ and where $(\E_j)_x$ is the $j$th step of the filtration of $\E_x$ induced by the filtration of $\E$. To compute $v_P$ we use that $X^*(P)_{\Q}\cong X^*(Z_M)_{\Q}\cong \Z^i$ with the $j$th factor being generated by the determinant on the $j$th factor $\GL_{n_j}$ of $M$. Thus the inclusion $X_*(Z_M)^{\Gamma}_{\Q}\hookrightarrow X_*(G)^{\Gamma}_{\Q}$ identifies $v_P$ with the Harder-Narasimhan polygon described above.
\end{proof}
\begin{remark}
	\begin{enumerate}
		\item Suppose that $G = \GL_n$. Let $x\in \Gr_G(C)$, let $\prescript{\bullet}{}{\mathcal{E}}$ be an arbitrary isoc-filtration of the trivial $G$-bundle $\E$ and let $ P \subset G $ be the associated parabolic subgroup of $G$ (defined over $F$). As in the above lemma, we have a cocharacter $v_P$ associated with the $P$-reduction $ \E_{x, P} $ corresponding to the $P$-reduction of $\E$ arising from $\prescript{\bullet}{}{\mathcal{E}}$. Moreover, there is a polygon $ P(\prescript{\bullet}{}{\mathcal{E}}) $, not necessarily convex, attached to $x$ and the isoc-filtration $\prescript{\bullet}{}{\mathcal{E}}$ (Def. \ref{polygon}). Let $v_1$ be the cocharacter associated with this polygon. Since we can use the Iwasawa decomposition of $x = p \cdot k $ ($ p \in P(\BdR), k \in G(\B+) $) to calculate both $v_P$ and $v_1$, we see that they are equal.
		\item Let $G$ be a reductive group and let $x, \prescript{\bullet}{}{\mathcal{E}}, P$ be as above. Hence we have a cocharacter $v_P$. Let $ \rho : G \longrightarrow \GL(V) $ be a representation of $G$ over $F$. Then $\rho_*(\prescript{\bullet}{}{\mathcal{E}})$ is an isoc-filtration of the trivial vector bundle $ \rho_* \mathcal{E} $ and it gives rise to a cocharacter $ v_{1, \rho} $. By the explanation above for the $\GL_n$ case, we see that $ \rho \circ v_P = v_{1, \rho} $. Thus, we can understand the cocharacter $ v_P $ as the generalization of the polygon associated with an arbitrary isoc-filtration when $G$ is no longer $\GL_n$.  
	\end{enumerate}
\end{remark}
 
\begin{prop}\label{propHNparred}%G quasi-split here?
Let $x\in \Gr_G (C)$. Let $\E'=\E_x$ be the associated modification of $\E=\E_1$. Then there is a unique parabolic subgroup $P$ of $G$ such that the slope vector $v'_P$ of the reduction $\E'_P$ corresponding to $\E_{1}^P$ is maximal with respect to the partial order, and $P$-regular. It coincides with the parabolic reduction corresponding (via Construction \ref{const1}) to the Harder-Narasimhan filtration of $(\E,x)$. Further, $v'_P=\HN(\E,x)$ is the Harder-Narasimhan vector as in Definition \ref{defHNG1}. We also call $\E'_P$ the canonical reduction of $(\E_1,x)$.
\end{prop}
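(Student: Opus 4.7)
The plan is to derive the three claims---existence, maximality, and uniqueness---of the canonical reduction from the $\GL_n$ Harder-Narasimhan theory by means of the Tannakian machinery developed in Section \ref{secTannaka}.

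First, for existence I would apply Construction \ref{const1} to the Harder-Narasimhan isoc-filtration $\prescript{\bullet(x)}{}\mathcal{E}$ supplied by Theorem \ref{itm : HN filtration - general reductive group}. This immediately yields a parabolic subgroup $P \subseteq G$ defined over $F$, a reduction $\E_1^P$ of the trivial bundle, and a cocharacter $v$ splitting $\omega_{\mathrm{iso}}\circ\prescript{\bullet(x)}{}\mathcal{E}$, with the property that $P$ is the parabolic attached to $v$ (in particular $v$ is $P$-regular, so the associated Levi is exactly the centralizer of $v$). By Lemma \ref{lemcompv}, the slope vector $v'_P$ of the induced reduction $\E'_P$ of $\E'=\E_x$ agrees with $v$, and by Definition \ref{defHNG1} this is precisely $\HN(\E,x)$. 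This settles the existence half, including the regularity condition.

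Next, for maximality I would argue Tannakially. Given any parabolic $Q$ over $F$ and any reduction $\E_1^Q$, one must show $v'_Q \leq \HN(\E,x)$. For each representation $\rho\colon G\to \GL(V)$, the $Q$-reduction gives a filtration of the trivial bundle $\rho_*\E_1$ stabilized by $\rho(Q)$, and via Remark \ref{remexsqC} this produces an exact filtration of $(\rho_*\E,\rho_*x)$ in the category $\mathcal C$. Proposition \ref{itm : comparison GLn} then bounds the polygon of this filtration by the Harder-Narasimhan polygon of $(\rho_*\E,\rho_*x)$, which by Theorem \ref{itm : HN filtration - general reductive group} is exactly the polygon associated with $\rho\circ\HN(\E,x)$. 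Running through a separating family of representations (e.g.\ the fundamental ones) and invoking the $\GL_n$ case of Lemma \ref{lemcompv} to identify these polygons with the slope vectors, one converts these polygon inequalities into the desired inequality $v'_Q \leq \HN(\E,x)$ of dominant cocharacters.

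For uniqueness, suppose $Q$ is $Q$-regular with $v'_Q = \HN(\E,x)$. Applying the equality case of Proposition \ref{itm : comparison GLn} representation-by-representation forces the filtration of $(\rho_*\E,\rho_*x)$ coming from $\E_1^Q$ to be a refinement of the Harder-Narasimhan filtration for every $\rho$, and the matching of slope vectors combined with $Q$-regularity eliminates any strict refinement. Hence the isoc-filtration attached to $\E_1^Q$ coincides with $\prescript{\bullet(x)}{}\mathcal{E}$, which by Construction \ref{const1} forces $Q=P$ and the reductions to agree. The main obstacle I expect is this last step: one must translate the representation-wise equalities of polygons cleanly back to an equality of isoc-filtrations, exploiting that a $Q$-regular slope cocharacter recovers $Q$ as $P(v'_Q)$ and that an isoc-filtration is determined by its underlying Tannakian data.
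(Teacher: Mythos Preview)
Your proposal is correct and follows essentially the same Tannakian route as the paper: existence via Construction \ref{const1} applied to the Harder--Narasimhan isoc-filtration of Theorem \ref{itm : HN filtration - general reductive group}, and maximality by pushing forward along representations to reduce to the $\GL_n$ comparison of Proposition \ref{itm : comparison GLn}. The only notable differences are that the paper first replaces an arbitrary $Q$ by a larger parabolic so that $v'_Q$ becomes $Q$-dominant $Q$-regular before comparing (a convenience rather than a necessity, since Proposition \ref{itm : comparison GLn} already handles non-convex polygons), and that the paper leaves uniqueness implicit---it follows immediately from the uniqueness of the Harder--Narasimhan filtration once maximality is in hand---rather than arguing via the equality case as you sketch.
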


\begin{remark}
Here, we use the following partial order on the slope vectors (which should not be confused with the usual order on dominant representatives): Let $P,P'$ be two parabolic subgroups of $G$ and let $v_P:X^*(P)\rightarrow \Z$ and $v_{P'}:X^*(P')\rightarrow \Z$ be two slope vectors. Conjugating by a suitable element of $G$ we may assume that $P$ and $P'$ contain a joint parabolic subgroup $P_0$. We view $v_P$ and $v_{P'}$ as elements of $X_*(Z_M)_{\mathbb Q}$ resp.~$X_*(Z_{M'})_{\mathbb Q}$ where $M$ and $M'$ are the Levi quotients of $P$ and $P'$. Both sets of central cocharacters are subsets of $X_*(Z_{M_0})_{\mathbb Q}$ for the Levi quotient $M_0$ of $P_0$. Then $v_P\leq v_{P'}$ if $v_{P'}-v_P$ is a non-negative linear combination of coroots in the unipotent radical of $P_0$. Since $v_P$ and $v_{P'}$ are central in the respective Levi quotients, this last condition holds for some choice of conjugates and of $P_0$ if and only if it holds for every such choice.

For a quasi-split group $G$ with a fixed pinning $B\supseteq T$, we may conjugate $P$ and $P'$ to standard parabolics. This maps $v_P$ and $v_{P'}$ to well-defined elements of $X_*(T)^{\Gamma}_{\Q}$, and the partial order then coincides with the usual partial order on coweights of $T$ (but not with the one comparing dominant representatives). 
\end{remark}

\begin{proof}[Proof of Prop. \ref{propHNparred}]
 By Theorem \ref{itm : HN filtration - general reductive group} and Construction \ref{const1}, there exists a unique parabolic subgroup $ P $ of $G$ such that the parabolic reduction of $\E'$ corresponding to $\E_1^P$ corresponds to the Harder-Narasimhan isoc-filtration of $ (\mathcal{E}, x) $. By definition, the associated slope vector is $P$-regular. It remains to show the claimed maximality.
  
Let $ Q $ be a parabolic subgroup of $G$ and consider the parabolic reduction $\E^Q_{1,x}$ corresponding to $\E_1^Q$. We have to show that the associated slope vector $v'_Q$ is bounded by $v'_P$. Possibly replacing $Q$ by a larger parabolic subgroup $Q'$ and thus $v'_Q$ by the corresponding image in the rational cocharacters of the center  of the Levi quotient of $Q'$, we may assume that $v'_Q$ is $Q$-dominant and $Q$-regular, in other words that conjugation by $v'_Q$ on the unipotent radical of $Q$ has strictly positive weights. For the existence of such $Q$ compare the claim in the proof of \cite[Thm. 5.5]{Viehmann20}.
 
We suppose first that $G = \GL_n$. The reduction $\E_1^Q$ corresponds to an isoc-filtration $ \mathcal{F}((\E, x)) $ of $ (\E, x) $ in the category $\mathcal{C}$, which gives rise to a polygon $ P\mathcal{F}((\E, x)) $. In the same way as in the proof of Lemma \ref{lemcompv}, we can identify the slope vector $v'_Q$ with the vector corresponding to $ P\mathcal{F}((\E, x)) $. Therefore from Proposition \ref{itm : comparison GLn} we have $ v'_Q \leq v_x $ and $v_x = v'_P$.
 
 Now consider general $G$ and let $ (V, \rho) $ be any algebraic representation of $G$. By the above $\GL_n$ case, we see that $ \rho (v'_Q) \leq \rho (v_x) $. Thus we have $ v'_Q \leq v_x $.
\end{proof}

\begin{remark}
Let $P$ be a parabolic subgroup of $G$, let $N$ denote its unipotent radical and fix a Levi subgroup $M$ of $P$. From the Iwasawa decomposition we obtain that 
$$\Gr_G(C)=\coprod_{\{\lambda\}_P}N(\BdR)M(\B+)\lambda(\xi)G(\B+)/G(\B+)$$ where the union is taken over all $P(\overline F)$-conjugacy classes of cocharacters of $P_{\overline F}$, and where $\lambda$ denotes any fixed representative of its conjugacy class. Note that the decomposition into this disjoint union does not depend on the choice of $M$ or the representatives $\lambda$. For a similar construction, compare also \cite[6.6]{Shen}

For $P=G$, the above decomposition coincides with the decomposition into affine Schubert cells, except that the piece for $\lambda$ corresponds to the affine Schubert cell for $\lambda^{-1}$. If $P$ is a Borel subgroup, we obtain the decomposition into semi-infinite orbits \`a la Mirkovic-Vilonen. In the same way as in \cite[VI.3]{FarguesScholze} we obtain that this decomposition induces a decomposition of $\Gr_G$ into locally closed sub-diamonds denoted $S_{\{\lambda\}_P,P}$.

Let $P'\supseteq M'$ be a second parabolic subgroup and Levi factor. Let $\{\mu\}_M$  be an $M(\overline F)$-conjugacy class of cocharacters of $M_{\overline F}$ and let  $\{\mu'\}_{M'}$  be an $M'(\overline F)$-conjugacy class of cocharacters of $M'_{\overline F}$. Let $B$ be a Borel subgroup of $G_{\bar F}$ defined over an algebraic closure, and let $T$ be a maximal torus of $G_{\bar F}$ contained in $B$. Then we write $\mu_{\dom}\leq \mu'_{\dom}$ if the $B$-dominant representatives $\mu_{\dom},\mu'_{\dom}\in X_*(T)$ satisfy the corresponding inequality. Notice that this does not depend on the choice of $B$ or $T$.

As in \cite[Thm.~3.2]{MirkovicVilonen} one shows that $S_{\{\mu\}_P,P}\cap \Gr_{G,\mu'}\neq \emptyset$ implies that $\{\mu\}_P\cap \overline{\Gr_{G,\mu'}}\neq \emptyset$, and hence that $(-\mu)_{\dom}\leq \mu'_{\dom}$.
\end{remark}

\begin{prop}\label{thmnonempty1}
\begin{enumerate}
\item Let $\{\mu\}$ be a conjugacy class of cocharacters of $G_{\overline F}$ and let $x\in \Gr_{G,\mu}(C)$. Then $\HN(\E,x)=\nu_{b'}$ for some $[b']\in B(G,\mu)$ satisfying the following condition. Let $P$ be the parabolic subgroup corresponding to the canonical reduction of $(\E,x)$ and let $M$ be a Levi subgroup. Then $[b']$ has a reduction $b'_M$ to $M$ such that $P$ is the parabolic subgroup associated with the $M$-dominant Newton point $\nu_{b'_M}^M$. Further, there is a $\lambda\in X_*(M)$ with $(-\lambda)_{\dom}\leq \mu_{\dom}$ and $\kappa_M(b'_M)=-\lambda^{\sharp_M}\in \pi_1(M)$. 
\item 	 Let $\lambda$ be as in (1), and assume in addition that $(-\lambda)_{\dom}=\mu_{\dom}$. Then the converse of (1) also holds. That is, for every $[b']\in B(G,\mu)$ as in (1) there is an $x\in \Gr_{G,\mu}(C)$ with $\HN(\E,x)=\nu_{b'}$. If $\mu$ is minuscule, this can be applied to all $\lambda$ as in (1).
\item For all $\lambda$ as in (1) there is an $x\in \Gr_{G,\leq\mu}(C)$ with $\HN(\E,x)=\nu_{b'}$.
\end{enumerate}
\end{prop}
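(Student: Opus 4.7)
The strategy is to handle part (1) as the main structural statement about the canonical reduction, with parts (2) and (3) as converse constructions sharing a common recipe.

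For (1), let $x \in \Gr_{G,\mu}(C)$ and apply Proposition \ref{propHNparred} to obtain the canonical parabolic $P$ with Levi $M$ and a reduction $\mathcal{E}_{x,P}$ of $\mathcal{E}_x$ whose slope vector is $\HN(\mathcal{E},x) =: v_P$. Contracting along $P \twoheadrightarrow M$ produces an $M$-bundle of class $[b'_M] \in B(M)$ mapping to $[b']$ in $B(G)$. The key claim is that $[b'_M]$ is basic with $\nu^M_{b'_M} = v_P$: the successive quotients of the HN filtration of $(\mathcal{E},x)$ are semi-stable in $\mathcal{C}$, so the induced graded pieces of $\mathcal{E}_x$ are semi-stable vector bundles of constant slopes equal to the components of $v_P$, and the Tannakian formalism of Section \ref{secTannaka} applied to representations of $M$ transports this to basic-ness of $[b'_M]$ with central Newton $v_P$. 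Then $P$ being associated with $\nu^M_{b'_M}$ follows from the $P$-regularity of $v_P$.

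The Kottwitz condition in (1) comes from the $P$-stratification of $\Gr_G$. Writing $x$ in the form $u \cdot m \cdot \lambda(\xi) \cdot k$ with $u \in N(\BdR)$, $m \in M(\B+)$, $k \in G(\B+)$, per the remark preceding the proposition, so that $x \in S_{\{\lambda\}_P,P}(C)$, the $M$-component identifies the modification of the trivial $M$-bundle induced by $x$ with the class $[b'_M]$; the standard relation between modifications and Kottwitz invariants yields $\kappa_M(b'_M) = -\lambda^{\sharp_M}$. The bound $-\lambda_{\dom} \leq \mu_{\dom}$ follows from $x \in S_{\{\lambda\}_P,P}(C) \cap \Gr_{G,\mu}(C) \neq \emptyset$ via the analog of \cite[Thm.~3.2]{MirkovicVilonen} stated in the same remark.

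For (2) and (3), given $[b']$ and $\lambda$ as in (1), pick a representative $b \in M(\breve F)$ of $[b'_M]$. By surjectivity of the modification map $\Gr_{M,(-\lambda)_{M,\dom}}(C) \to B(M,-\lambda)$ (the $M$-analog of \cite{RapoportAppendix}), there is $y \in \Gr_M(C)$ whose modification of the trivial $M$-bundle realizes $[b'_M]$, and one can choose $y$ to sit in the generic $G$-stratum $\Gr_{G,(-\lambda)_{\dom}}$. Set $x$ to be the image of $y$ in $\Gr_G(C)$. For (3), $x \in \Gr_{G,(-\lambda)_{\dom}} \subseteq \Gr_{G,\leq\mu}$ by the hypothesis $-\lambda_{\dom} \leq \mu_{\dom}$; for (2), equality places $x \in \Gr_{G,\mu}$. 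The $M$-reduction of $\mathcal{E}_x$ is $\mathcal{E}_b^M$, basic with central Newton $v_P$, so Proposition \ref{propHNparred} applied to this $x$ identifies $P$ as the canonical parabolic, giving $\HN(\mathcal{E},x) = v_P = \nu_{b'}$.

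For the minuscule addendum, if $\mu$ is minuscule then $-\lambda_{\dom}$ and $\mu_{\dom}$ are dominant coweights, congruent modulo coroots (by Kottwitz compatibility), and satisfy $-\lambda_{\dom} \leq \mu_{\dom}$; the minuscule hypothesis identifies the weights of $V(\mu_{\dom})$ with the Weyl orbit $W\mu_{\dom}$, so the only dominant cocharacter in the coset $\mu_{\dom} + \Z\Phi^{\vee}$ bounded by $\mu_{\dom}$ is $\mu_{\dom}$ itself, forcing $-\lambda_{\dom} = \mu_{\dom}$. The main obstacle I anticipate is the Kottwitz part of (1): extracting the precise cocharacter $\lambda \in X_*(M)$ (not just its class in $\pi_1(M)$) from the Iwasawa decomposition and verifying the bound $-\lambda_{\dom} \leq \mu_{\dom}$ through the semi-infinite orbit structure on $\Gr_G$.
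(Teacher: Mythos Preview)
There is a genuine gap in your argument for (1). You define $[b'_M]$ as the isomorphism class of the actual $M$-bundle $\E_{x,P}\times^P M$ and then assert it is basic, arguing that ``the successive quotients of the HN filtration of $(\E,x)$ are semi-stable in $\mathcal C$, so the induced graded pieces of $\E_x$ are semi-stable vector bundles.'' This inference is invalid: semi-stability in $\mathcal C$ (i.e.\ weak admissibility) does \emph{not} imply semi-stability of the resulting vector bundle on $X$. Indeed, the whole point of the paper is that the $\mathcal C$-Harder--Narasimhan theory differs from Fargues--Fontaine's; a $\mathcal C$-semi-stable quotient can have a highly non-trivial Newton polygon. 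So the $M$-bundle you obtain need not be basic, and your $[b'_M]$ need not have central Newton point equal to $v_P$. The paper avoids this entirely: it never looks at the actual $M$-bundle, but simply \emph{defines} $[b'_M]\in B(M)$ to be the unique basic class with $\kappa_M(b'_M)=-\lambda^{\sharp_M}$, and then checks by hand (via Lemma~\ref{lemcompv}) that $\nu_{b'_M}=v_P$ and that $[b']\in B(G,\mu)$.

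There is a second, smaller gap in (2)/(3). You conclude by saying ``Proposition~\ref{propHNparred} applied to this $x$ identifies $P$ as the canonical parabolic,'' but that proposition characterizes the canonical reduction by \emph{maximality} of the slope vector, not merely by $P$-regularity. Exhibiting one reduction to $P$ with slope $\nu_{b'}$ gives only $\HN(\E_1,x)\geq\nu_{b'}$; you still need an upper bound. The paper obtains it from the comparison with the Newton polygon (Remark~\ref{remcompHNNewt}): since $x_M$ was chosen in the Newton stratum $\Gr_{M,-\lambda,1}^{[b'_M]^*_M}$, one has $\Newt(\E_{1,x})^*=[b']$, hence $\HN(\E_1,x)\leq\nu_{b'}$, closing the sandwich. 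Your construction is the same as the paper's, but you have omitted this step. The obstacle you flagged (extracting $\lambda$ and the bound $-\lambda_{\dom}\leq\mu_{\dom}$) is in fact routine and handled exactly as you outlined, via the semi-infinite orbit stratification; the real difficulty lies elsewhere.
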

\begin{definition}
For $G$ and $\mu$ as above let $B(G,\mu)_{\HN}$ be the set of $[b']\in B(G)$ satisfying the condition of Proposition \ref{thmnonempty1}(1). Let $B(G)_{\HN}=\bigcup_{\mu}B(G,\mu)_{\HN}$, the set of $[b']\in B(G)$ that have a reduction $b'_M$ to a Levi subgroup $M$ of some parabolic subgroup $P$ of $G$ such that $[b'_M]_M\in B(M)$ is basic and the $M$-dominant Newton point $\nu_{b'_M}$ of $b'_M$ is $P$-dominant $P$-regular in the sense that it satisfies $\langle \alpha,\nu_{b'_M}\rangle>0$ for each root $\alpha$ in the unipotent radical of $P$. 

For $[b']\in B(G,\mu)_{\HN}$ we write $\Gr_{G,\mu}(C)^{\HN=[b']}$ for the set of $x\in \Gr_{G,\mu}(C)$ with $\HN(\E,x)=\nu_{b'}$. 

For such $x$ we also write $\HN(\E,x)=[b']$.
\end{definition}
\begin{remark}
\begin{enumerate}
\item If would be interesting to know if the converse of (1) also holds in general, without the additional assumption of (2).
\item By definition, $B(G,\mu)_{\HN}$ is a subset of $B(G,\mu)$. In particular, it is finite. For $G$ quasi-split, one can show that $B(G,\mu)=B(G,\mu)_{\HN}$. 
\item Let $[b']\in B(G)$. Then by (3) of the proposition, there is an $x\in \Gr_G(C)$ with $\HN(x)=[b']$ if and only if $[b']\in B(G)_{\HN}$.
\end{enumerate}
\end{remark}

\begin{proof}[Proof of Proposition \ref{thmnonempty1}]
Assume that $x\in \Gr_{G,\mu}(C)$ with $\HN(\E,x)=v$. We consider the canonical reduction of $(\E,x)$, which corresponds to a parabolic subgroup $P$ of $G$. Let $\{\lambda\}_P$ be the $P(\overline F)$-conjugacy class of cocharacters of $P_{\overline F}$ with $x\in S_{\{\lambda\}_P,P}(C)$. Non-emptiness of the intersection $\Gr_{G,\mu}(C)\cap S_{\{\lambda\}_P}(C)$ then implies that $(-\lambda)_{\dom}\leq\mu_{\dom}$.

Let $M$ be a Levi subgroup of $P$. By Lemma \ref{lemcompv}, $v$ is the element of $X_*(Z_M)_{\mathbb Q}^{\Gamma}$ corresponding to  $-\lambda^{\sharp_{M}}\in\pi_1(M)_{\Gamma}$. Let $[b']\in B(M)$ be basic with $\kappa_{M}(b')= -\lambda^{\sharp_{M}}$. Then $\nu_{b'}$ is central in $M$ and $\kappa_{M}(b')=v^{\sharp_M}\in\pi_1(M)_{\Gamma,\Q}$. Thus $v=\nu_{b'}$. Finally, we have $\nu_{b'}\leq (-\lambda)_{\dom}\leq \mu_{\dom}$, and $\kappa_G(b')=-\lambda^{\sharp_G}=\mu^{\sharp_G}$, thus $[b']\in B(G,\mu)$. This finishes the proof of (1).

Let $\lambda$ be as in (2). By \cite[Cor.~A.10]{RapoportAppendix} and our assumption $\kappa_M(b'_M)=-\lambda^{\sharp_M}$, the Newton stratum $\Gr_{M,-\lambda,1}^{[b'_{M}]_M^*}$ is non-empty. Let $x_{M}\in \Gr_{M,-\lambda,1}^{[b'_{M}]_M^*}(C)$. Let $x$ be its image under the natural inclusion $\Gr_{M}\rightarrow \Gr_G$. Since $-\lambda\in\{\mu\}$, we have $x\in\Gr_{G,\mu,1}^{[b']^*}(C)$. By definition of $x$, the pair $(\E_1,x)$ has a reduction to $P$ of slope vector $\nu_{b'}$. By maximality of the Harder-Narasimhan reduction and by Remark \ref{remcompHNNewt} we thus have $\nu_{b'}\leq \HN(\E_1,x)\leq \nu_{b'}$, and hence $x$ is a point in the claimed Harder-Narasimhan stratum. This proves (2), and (3) follows along the same lines.
\end{proof}

Let $G_{\ad}$ be the adjoint group of $G$. By a subscript ad we denote the image of elements or subsets of $G$ in $G_{\ad}$, and similarly for other invariants of elements.

\begin{lemma}\label{lemgad}
Let $\{\mu\}$ be a conjugacy class of cocharacters of $G_{\overline F}$, and let $x\in \Gr_{G,\mu}(C)$. Then $\HN_G(x)_{\ad}=\HN_{G_{\ad}}(x_{\ad})$, and the canonical reduction of $(\E^{G_{\ad}}_1,x_{\ad})$ (to a parabolic subgroup $P'$) is obtained from the canonical reduction of $(\E^G_1,x)$ (to a subgroup $P$) via the map $P\rightarrow P/Z(G)= P'\subseteq G_{\ad}$. 
\end{lemma}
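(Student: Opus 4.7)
The plan is to derive the entire statement by restricting the Tannakian construction of the Harder--Narasimhan isoc-filtration (Theorem \ref{itm : HN filtration - general reductive group} together with Construction \ref{const1}) along the quotient map $\pi\colon G\to G_{\ad}$. Since both canonical reductions are characterised by universal properties on representation categories, it suffices to show that the HN isoc-filtration for $G_{\ad}$ is the restriction of the one for $G$ to the subcategory $\pi^{*}\Rep_{F}G_{\ad}\subseteq\Rep_{F}G$, and then read off the parabolic and the cocharacter from Construction \ref{const1}.

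First I would observe that $\pi^{*}\colon\Rep_{F}G_{\ad}\to\Rep_{F}G$ is a fully faithful exact tensor functor whose essential image consists of the representations on which $Z(G)$ acts trivially. For any $V\in\Rep_{F}G_{\ad}$, the pairs $(P_{\E^{G}_{1}}(\pi^{*}V),P_{x}(\pi^{*}V))$ and $(P_{\E^{G_{\ad}}_{1}}(V),P_{x_{\ad}}(V))$ coincide as objects of $\overline{\mathcal C}$: the underlying vector bundle is trivial of rank $\dim_{F}V$ in both cases, and the $\B+$-lattice associated to a point only depends on its image in $\Gr_{\GL_{V}}(C)$, which is the same for $x$ and $x_{\ad}$. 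Hence the Harder--Narasimhan filtration in $\mathcal C$ is identical on the two pairs. It follows that the restriction of $\prescript{\bullet(x)}{}{\E^{G}_{1}}$ along $\pi^{*}$ is an isoc-filtration of $\E^{G_{\ad}}_{1}$ whose value on any $V$ realises the HN filtration of $(P_{\E^{G_{\ad}}_{1}}(V),P_{x_{\ad}}(V))$; by the uniqueness part of Theorem \ref{itm : HN filtration - general reductive group}, this restriction must equal $\prescript{\bullet(x_{\ad})}{}{\E^{G_{\ad}}_{1}}$.

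Next I would transfer this identification to the level of parabolic subgroups via Construction \ref{const1}. By definition $P=\underline{\Aut}^{\otimes}(\omega^{F}_{\text{iso}}\circ\prescript{\bullet(x)}{}{\E^{G}_{1}})$ and $P'=\underline{\Aut}^{\otimes}(\omega^{F}_{\text{iso}}\circ\prescript{\bullet(x_{\ad})}{}{\E^{G_{\ad}}_{1}})$. Restriction along $\pi^{*}$ induces a natural morphism $P\to P'$ whose kernel is the subgroup of $G$ acting trivially on every representation pulled back from $G_{\ad}$; by Tannakian reconstruction this kernel is exactly $Z(G)$. Surjectivity of $P\to P'$ follows because any tensor automorphism of $\omega^{F}_{\text{iso}}\circ\prescript{\bullet(x_{\ad})}{}{\E^{G_{\ad}}_{1}}$ extends to one of $\omega^{F}_{\text{iso}}\circ\prescript{\bullet(x)}{}{\E^{G}_{1}}$ via the fully faithful embedding $\pi^{*}$, giving $P/Z(G)\cong P'$. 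Note in passing that $Z(G)\subseteq P$ holds automatically, since every parabolic contains a maximal torus and hence the centre.

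Finally, for the HN vector, the cocharacter $v_{x}\colon\mathbb G_{m,F}\to P$ furnished by Construction \ref{const1} splits $\omega_{\text{iso}}\circ\prescript{\bullet(x)}{}{\E^{G}_{1}}$ and is central in the Levi of $P$. Composing with $\pi$ yields a cocharacter $\pi\circ v_{x}\colon\mathbb G_{m,F}\to P'$ that is central in the Levi of $P'$ and, by the tensor compatibility established above, splits $\omega_{\text{iso}}\circ\prescript{\bullet(x_{\ad})}{}{\E^{G_{\ad}}_{1}}$. By the uniqueness of such a splitting it must agree with $\HN(\E^{G_{\ad}}_{1},x_{\ad})$, yielding $\HN_{G}(x)_{\ad}=\HN_{G_{\ad}}(x_{\ad})$. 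The main technical obstacle I foresee is the Tannakian bookkeeping for the identification $\ker(P\to P')=Z(G)$ and the surjectivity of $P\to P'$; everything else is formal from the universal characterisations of the HN isoc-filtration and its associated parabolic reduction.
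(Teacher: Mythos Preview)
Your argument is essentially correct, but it takes a considerably more elaborate route than the paper. The paper's proof is two lines: it invokes Proposition~\ref{propHNparred}, which characterises the canonical reduction of $(\E_1,x)$ as the unique parabolic reduction whose slope vector is maximal and $P$-regular. Since the projection $G\to G_{\ad}$ induces a bijection between parabolic $F$-subgroups of $G$ and of $G_{\ad}$, and since slope vectors of the corresponding reductions of $\E_{1,x}$ and $\E_{1,x_{\ad}}$ agree under this bijection (both are computed from the same degrees), the maximality criterion transfers directly. No Tannakian bookkeeping is needed.

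Your approach instead restricts the Tannakian HN isoc-filtration along $\pi^{*}\colon\Rep_{F}G_{\ad}\hookrightarrow\Rep_{F}G$ and then reads off the parabolic and the cocharacter from Construction~\ref{const1}. Step~1 (the restricted isoc-filtration equals $\prescript{\bullet(x_{\ad})}{}{\E^{G_{\ad}}_{1}}$) is clean and correct. However, your Step~2 argument for surjectivity of $P\to P'$ (``any tensor automorphism \ldots\ extends via the fully faithful embedding'') is not justified as written: a tensor automorphism on a full tensor subcategory need not extend. You yourself flag this as the main obstacle. The fix is to reverse the order of Steps~2 and~3: once you know (from Step~1) that $\pi\circ v_{x}$ splits the $G_{\ad}$-filtration, Saavedra's theory tells you that the parabolic attached to the filtration is $P(\pi\circ v_{x})=\pi(P(v_{x}))=\pi(P)$, which forces $P'=\pi(P)$ without any extension argument. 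With this reordering your proof goes through, but it is still longer than simply appealing to the maximality criterion of Proposition~\ref{propHNparred}.
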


\begin{proof}
The projection map $G\rightarrow G_{\ad}$ induces a bijection between parabolic $F$-subgroups of $G$ and of $G_{\ad}$, and also identifies all other data needed in the description of slope vectors of parabolic reductions of vector bundles, and in the criterion given in Proposition \ref{propHNparred}. Thus the proposition implies the lemma.
\end{proof}

In the remainder of this section we introduce a second, finer invariant, called the Harder-Narasimhan type of a modification. It is along the lines of the notion of \hn types in \cite{DOR}, but differs from it if $\mu$ is not minuscule. Furthermore, our definition is less technical, due to our restriction to the case of modifications of the trivial bundle.

\begin{definition}
\begin{enumerate}
\item A \hn pair is a pair $(P,\{\lambda\}_P)$ consisting of a parabolic subgroup $P$ of $G$  and a $P(\overline F)$-conjugacy class $\{\lambda\}$ of cocharacters of $P_{\overline F}$ satisfying the following property. Let $M$ be a Levi factor of $P$ containing a representative $\lambda$ of the given class. Then $P=P(\av_M(-\lambda))$ where $\av_M(-\lambda)$ is the unique rational cocharacter of $M$ that is central in $M$ and whose image in $\pi_1(M)_{\Gamma}$ agrees with $-\lambda^{\sharp_M}$. Notice that this condition holds for all $\lambda, M$ if it holds for one such pair.  
\item A \hn type is a $G(F)$-conjugacy class of \hn pairs.
\item For a conjugacy class $\{\mu\}\in X_*(G)/G$ and $[b']\in B(G,\mu)_{\HN}$ let $\Theta(\mu,[b'])$ be the set of \hn types containing a pair $(P,\{\lambda\})$ such that $[b']$ has a reduction $b'_M$ to some Levi factor $M$ of $P$ that is basic in $M$, such that $-\lambda^{\sharp_M}=\kappa_M(b')\in \pi_1(M)_{\Gamma}$ and such that $(-\lambda)_{G-\dom}\leq \mu_{G-\dom}$. Here, $\lambda^{\sharp_M}$ is the image of the projection of $\lambda$ to $M$ in $\pi_1(M)_{\Gamma}$.
\item Let $x\in \Gr_{G,\mu}(C)$. Consider the canonical parabolic reduction of $(\E_1,x)$, and let $P\subseteq G$ be the associated parabolic subgroup. Let $\lambda\in X_*(P)$ with $x\in S_{\{\lambda\}_P,P}$. Then the $G(F)$-conjugacy class of $(P,\{\lambda\}_P)$ is called the Harder-Narasimhan type of $(\E,x)$.
\end{enumerate}
\end{definition}

\begin{remark}
Parabolic subgroups of $G$ are $G(F)$-conjugate if and only if they are $G(\overline F)$-conjugate. Furthermore, the conjugacy classes of parabolic subgroups of $G$ are in bijection with subsets of the set of simple roots of $G$ relative to $F$. In particular, for any given $G,\mu$ and $[b']\in B(G,\mu)_{\HN}$, the set $\Theta(\mu,[b'])$ is finite. By definition of $B(G,\mu)_{\HN}$, it is also non-empty. 

It is easy to find examples where $\Theta(\mu,[b'])$ has more than one element. For example, let $G=\GL_5$, let $\mu=(4,3,2,1,0)\in X_*(T)$ and let $P$ be the standard parabolic subgroup obtained as stabilizer of $\langle e_1,e_2\rangle\subseteq \breve F^5$. Then $-\lambda_1=(1,4,0,2,3)$ and $-\lambda_2=(2,3,0,1,4)$ are two different cocharacters with $(P,\{\lambda_i\})\in \Theta(\mu,[b'])$ for $[b']$ of Newton polygon $(\tfrac{5}{2}^{(2)},\tfrac{5}{3}^{(3)})$.
\end{remark}

\begin{lemma}
For $x\in \Gr_{G,\mu}(C)^{\HN=[b']}$, the Harder-Narasimhan type of $(\E_1,x)$ is a well-defined element of $\Theta(\mu,[b'])$.

Conversely, the Harder-Narasimhan type $(P,\{\lambda\}_P)$ of $x$ determines $\HN(\E,x)$ as being the unique class $[b']\in B(G,\mu)$ having a representative $b'_M$ in a Levi factor $M$ of $P$ that is basic in $M$ and with $\kappa_M(b'_M)=(-\lambda)^{\sharp_M}$.
\end{lemma}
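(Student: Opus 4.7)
For well-definedness, observe that by Proposition \ref{propHNparred}, the canonical reduction of $(\E_1,x)$ produces a specific parabolic $F$-subgroup $P\subseteq G$. The semi-infinite orbit decomposition recalled in the remark preceding Proposition \ref{thmnonempty1} then determines a unique $P(\overline F)$-conjugacy class $\{\lambda\}_P$ with $x\in S_{\{\lambda\}_P,P}(C)$, so passing to the $G(F)$-conjugacy class of $(P,\{\lambda\}_P)$ defines an invariant of $x$.

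Next, I would verify that $(P,\{\lambda\}_P)$ is a Harder-Narasimhan pair. Fix a Levi factor $M$ of $P$ containing a representative $\lambda$. By Lemma \ref{lemcompv} the slope vector of the canonical $P$-reduction of $\E_x$ equals $\HN(\E,x)=\nu_{b'}$, and by the computation carried out in the proof of Proposition \ref{thmnonempty1}(1), this same slope vector is the unique element of $X_*(Z_M)_{\Q}^{\Gamma}$ with image $-\lambda^{\sharp_M}$ in $\pi_1(M)_{\Gamma}$, i.e.\ $\av_M(-\lambda)$. Since the canonical reduction is $P$-regular by Proposition \ref{propHNparred}, $P=P(\nu_{b'})=P(\av_M(-\lambda))$, which is exactly the Harder-Narasimhan pair condition. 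To conclude that $(P,\{\lambda\}_P)\in\Theta(\mu,[b'])$, I would combine the inequality $(-\lambda)_{\dom}\leq\mu_{\dom}$, which follows from $\Gr_{G,\mu}\cap S_{\{\lambda\}_P,P}\neq\emptyset$ via the remark before Proposition \ref{thmnonempty1}, with the existence of a basic reduction $b'_M\in M$ of $[b']$ satisfying $\kappa_M(b'_M)=-\lambda^{\sharp_M}$, which is exactly what the proof of Proposition \ref{thmnonempty1}(1) constructs.

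For the converse, suppose the Harder-Narasimhan type of $x$ is represented by $(P,\{\lambda\}_P)$. The first part of the lemma forces $[b']=\HN(\E,x)$ to possess a reduction $b'_M$ to a Levi factor $M$ of $P$ that is basic in $M$ and satisfies $\kappa_M(b'_M)=(-\lambda)^{\sharp_M}$. Because basic classes in $B(M)$ are characterized by their Kottwitz invariants via the bijection $B(M)_{\basic}\xrightarrow{\sim}\pi_1(M)_{\Gamma}$, the class $[b'_M]_M$ is uniquely determined, and hence so is its image $[b']\in B(G)$. The only point requiring real care is the identification of the slope vector $\HN(\E,x)$ with the central cocharacter $\av_M(-\lambda)$ read off from the semi-infinite orbit of $x$; once this is in hand, both directions of the lemma follow from routine bookkeeping with the Kottwitz bijection for basic classes.
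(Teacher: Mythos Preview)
Your proposal is correct and follows essentially the same approach as the paper: both arguments use the uniqueness of the canonical reduction to pin down $P$ and $\{\lambda\}_P$, identify $\HN(\E,x)$ with $\av_M(-\lambda)$ via the slope-vector computation (Lemma \ref{lemcompv} and the proof of Proposition \ref{thmnonempty1}(1)), deduce $P=P(\av_M(-\lambda))$ from $P$-regularity, obtain $(-\lambda)_{\dom}\leq\mu_{\dom}$ from the non-empty intersection $S_{\{\lambda\}_P,P}\cap\Gr_{G,\mu}$, and for the converse invoke the bijection between basic classes in $B(M)$ and $\pi_1(M)_{\Gamma}$. The paper's proof is simply a more compressed version of yours.
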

\begin{proof}
The uniqueness of the canonical reduction implies that $P$ and $\lambda$ are uniquely determined. We have 
\begin{equation}\label{eqHNtype}
(-\lambda)^{\sharp_M}=\HN(\E,x)^{\sharp_M}=\kappa_M(b')\in \pi_1(M)_{\Gamma}.
\end{equation}

Further, $\av_M(-\lambda)$ is the \HN vector of $(\E,x)$, hence the corresponding parabolic subgroup is indeed $P$. Since $x\in S_{\{\lambda\}_P}(C)\cap \Gr_{G,\mu}(C)$, we have $(-\lambda)_{\dom}\leq \mu_{\dom}$.  Altogether, we have shown that the $G(F)$-conjugacy class of $(P,\lambda)$ is an element of $\Theta(\mu,[b'])$.

Because $\HN(\E_b,x)$ is basic in $M$, it is determined by its value of $\kappa_M$. This then follows again from \eqref{eqHNtype}.
\end{proof}
In this article we are mainly concerned with the Harder-Narasimhan vector of a modification $(\E_1,x)$.

\section{The Harder-Narasimhan stratification of the \BG}

\begin{definition}
We consider the (surjective) map $$\HN:|\Gr_{G}|\rightarrow B(G)_{\HN}$$ that maps any $x\in\Gr_{G}(C)$ to $\HN(\E_1,x)$, which by Proposition \ref{thmnonempty1} is indeed an element of $B(G)_{\HN}$.  One can easily check that this is well-defined in the sense that it is independent of the choice of $C$ and only depends on the element of $|\Gr_{G}|$ underlying the point $x$.

For $[b']\in B(G)_{\HN}$ let $\Gr_{G}(C)^{\HN\leq[b']}$ be the subset of all points with image $[b'']$ for some $[b'']\leq[b']$ and likewise for $\Gr_{G,\mu}(C)^{\HN\leq[b']}$, $\Gr_{G,\leq \mu}(C)^{\HN\leq[b']}$ and/or the subsets replacing $\HN\leq[b']$ by $\HN=[b']$.
\end{definition}

Let $P$ be a parabolic subgroup of $G$ and let $M$ be a Levi subgroup. Let $\lambda\in X_*(P)$. Then $[v(\lambda)]$ is defined to be the element of $B(G)$ corresponding to the unique basic element of $B(M)$ with $\kappa_M(v(\lambda))=\lambda^{\sharp_M}$. It only depends on the $P(\overline F)$-conjugacy class $\{\lambda\}_P$ and neither on the choice of $M$ nor of a representative $\lambda$.

\begin{thm}\label{thmdescr}
Let $[b']\in B(G)_{\HN}$.
\begin{enumerate}
\item We have 
\begin{equation}\label{eqexpdesc}
\Gr_{G}(C)^{\HN\geq [b']}=\bigcup_{P\subseteq G}\bigcup_{\{\{\lambda\}_P\mid [v(\lambda)]\geq[b']\}}S_{\{\lambda\}_P,P}(C)
\end{equation}
where the first union is taken over all parabolic subgroups of $G$.
\item $\Gr_{G,\mu}(C)^{\HN\geq[b']}$, resp. $\Gr_{G,\mu}(C)^{\HN=[b']}$ are the sets of $C$-valued points of a closed resp., a locally closed subspace of $|\Gr_{G,\mu}|$. They are invariant under the action of $G(F)$ on $\Gr_{G,\mu}$. Similarly, $\Gr_{G,\leq\mu}(C)^{\HN=[b']}$ is the set of $C$-valued points of a locally closed subspace of $|\Gr_{G,\leq\mu}|$, and $\Gr_{G,\leq\mu}(C)^{\HN\leq [b']}$ is the set of $C$-valued points of a closed subspace of $|\Gr_{G,\leq \mu}|$, and even of $|\Gr_{G}|$. 
\item $\HN:|\Gr_{G}|\rightarrow B(G)_{\HN}$ is lower semi-continuous.
\end{enumerate}
\end{thm}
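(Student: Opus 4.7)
The plan is to establish (1) directly from the construction of the canonical reduction, deduce (2) as a finite-union statement that reduces to a closure relation for semi-infinite cells, and then obtain (3) from the second half of (2) via the colimit topology $\Gr_G = \bigcup_{\{\mu\}} \Gr_{G,\leq\mu}$.

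For (1), both inclusions are direct. Given $x$ with $\HN(\E_1,x) \geq [b']$, Construction \ref{const1} provides a canonical parabolic $P$; the Iwasawa decomposition places $x$ in a unique cell $S_{\{\lambda\}_P,P}(C)$, and Proposition \ref{thmnonempty1} together with Lemma \ref{lemcompv} identifies $\HN(\E_1,x)$ with $[v(\lambda)]$, so $[v(\lambda)] \geq [b']$. Conversely, if $x \in S_{\{\lambda\}_P,P}(C)$, then $(\E_1,x)$ inherits a $P$-reduction of slope vector corresponding to $[v(\lambda)]$ from Construction \ref{const1} applied to the trivial $P$-bundle, and the canonical reduction dominates this one by Proposition \ref{propHNparred}, giving $\HN(\E_1,x) \geq [v(\lambda)] \geq [b']$.

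For (2), part (1) rewrites
\[
\Gr_{G,\leq\mu}(C)^{\HN \geq [b']} = \bigcup_{P,\{\lambda\}_P \colon [v(\lambda)] \geq [b']} \bigl(S_{\{\lambda\}_P,P} \cap \Gr_{G,\leq\mu}\bigr)(C).
\]
This union is \emph{finite}, because $P$ ranges over finitely many $G(F)$-conjugacy classes and nonemptiness of the intersection forces $(-\lambda)_{\dom} \leq \mu_{\dom}$ by the Mirkovic--Vilonen-style inequality noted before Proposition \ref{thmnonempty1}. Each cell $S_{\{\lambda\}_P,P}$ is locally closed in $\Gr_G$ by the argument of \cite[VI.3]{FarguesScholze}, so the union is constructible; the assertions for $\HN = [b']$ then follow by set-theoretic differences and the dual assertion for $\HN \leq [b']$ on $\Gr_{G,\leq\mu}$ by complementation. $G(F)$-invariance is immediate, since left translation by $g \in G(F)$ is an automorphism of $\E_1$ and thus preserves the isomorphism class of $\E_{1,x}$ together with its HN invariant. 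Closedness of the union reduces to the closure-relation statement $\overline{S_{\{\lambda\}_P,P}} \cap S_{\{\lambda'\}_{P'},P'} \neq \emptyset \implies [v(\lambda')] \geq [v(\lambda)]$, which is the main technical obstacle. I plan to prove it by Tannakian reduction (Section \ref{secTannaka}) to $G = \GL_n$, where it is equivalent to upper semi-continuity of the HN polygon on families of modifications of vector bundles; this in turn follows from upper semi-continuity of Newton polygons in families on the Fargues-Fontaine curve, combined with the inequality $\HN \leq \Newt^*$ built into the formalism and the finite range of HN on quasi-compact pieces.

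Finally, for (3): $\Gr_G = \bigcup_{\{\mu\}} \Gr_{G,\leq\mu}$ is a filtered union along closed immersions, so closedness of $\Gr_{G,\leq\mu}^{\HN \leq [b']}$ in each $\Gr_{G,\leq\mu}$ (from (2)) yields closedness of $\Gr_G^{\HN \leq [b']}$ in $\Gr_G$, which is precisely lower semi-continuity of $\HN\colon |\Gr_G| \to B(G)_{\HN}$.
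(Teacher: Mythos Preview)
Your argument for (1) is essentially the paper's, and your deduction of (3) from (2) is fine. The problem is in (2).

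The union in \eqref{eqexpdesc} is \emph{not} finite after intersecting with $\Gr_{G,\leq\mu}$. It is true that there are only finitely many $G(F)$-conjugacy classes of parabolics $P$ and, for each, only finitely many classes $\{\lambda\}_P$ with $(-\lambda)_{\dom}\leq\mu_{\dom}$; but within a fixed conjugacy class there are infinitely many parabolic subgroups, parametrized by the infinite set $(G/P)(F)$. So the union you wrote is an \emph{infinite} union of locally closed pieces, and constructibility/closedness does not follow formally. Your $G(F)$-invariance observation does not help: the $G(F)$-saturation of a closed subset is not closed in general.

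This is exactly the difficulty the paper isolates. Its argument proceeds in two steps. First, for a fixed $P$, the closure of $S_{\{\lambda\}_P,P}$ is a union of cells $S_{\{\lambda'\}_P,P}$ for the \emph{same} $P$, and one checks $[v(\lambda')]\geq[b']$ for these $\lambda'$ using that $\kappa_M(\pr_M(\cdot))\geq_P\kappa_M(b')$ is a closed condition. This reduces the question to closedness of
\[
\bigcup_{g\in (G/P)(F)}\overline{S_{\{g\lambda g^{-1}\}_{gPg^{-1}},gPg^{-1}}}\cap \Gr_{G,\leq\mu}
\]
for finitely many fixed $(P,\{\lambda\}_P)$. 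Second, one uses that $(G/P)(F)$ is \emph{compact} (as in \cite[Prop.~8.2.1]{DOR}): given a sequence $x_n$ in the union converging to $x$, pass to a convergent subsequence $g_n\to g$ in $(G/P)(F)$ and conclude $x\in \overline{S_{\{g\lambda g^{-1}\}_{gPg^{-1}}}}\cap\Gr_{G,\leq\mu}$. This compactness argument is the crux, and it is absent from your proposal.

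Your alternative route---reducing to $\GL_n$ and invoking semi-continuity of the Newton polygon together with $\HN\leq\Newt^*$---does not work either. The inequality goes the wrong way: from $\HN(x_n)=[b']$ and semi-continuity of $\Newt^*$ you only get $\Newt^*(x)\geq[b']$, which says nothing about $\HN(x)$. To bound $\HN(x)$ from below you need to exhibit an actual reduction of $(\E_1,x)$ with large slope vector, and that is precisely what the compactness argument produces (a limiting parabolic $gPg^{-1}$).
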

\begin{definition}
Let $\Gr_{G,\mu}^{\HN\geq[b']}$ denote the closed, locally spatial subdiamond of $\Gr_{G,\mu}$ with $\Gr_{G,\mu}^{\HN\geq[b']}(C)=\Gr_{G,\mu}(C)^{\HN\geq[b']}$. It is called the closed \hn stratum for $[b']$ in $\Gr_{G,\mu}$.

Likewise, we define the \hn stratum for $[b']$ to be the locally closed locally spatial subdiamond $\Gr_{G,\mu}^{\HN=[b']}\subseteq \Gr_{G,\mu}$ with $\Gr_{G,\mu}^{\HN=[b']}(C)=\Gr_{G,\mu}(C)^{\HN=[b']}$.

We also use corresponding notions for $\Gr_{G,\leq\mu}$ instead of $\Gr_{G,\mu}$.
\end{definition}

\begin{proof}[Proof of Theorem \ref{thmdescr}]
For (1) consider some $x\in \Gr_{G}(C)$ with $\HN(x)=[b'']\geq [b']$. Let $P$ be the parabolic subgroup of $G$ corresponding to the canonical reduction of $(\E,x)$, and let $\{\lambda\}_P$ be such that $x\in S_{\{\lambda\}_P,P}(C)$. Then by \cite[Lemma 3.11(1)]{Viehmann20} the slope vector of the reduction $(\E_{1,x})_P$ is equal to $[v(\lambda)]=[b'']\geq [b']$, which proves that the left hand side is contained in the right hand side of \eqref{eqexpdesc}. Conversely, let $x\in S_{\{\lambda\}_P,P}(C)$ for some $P\subset G$ and some $\{\lambda\}_P$. Then the slope vector of the parabolic reduction $(\E_{1,x})_P$ corresponding to $\E_1^P$ is (again by \cite[Lemma 3.11(1)]{Viehmann20}) equal to $[v(\lambda)]\geq [b']$. By the maximality of the Harder-Narasimhan vector, this implies that $\HN(\E,x)\geq [b']$.

(3) follows immediately from (2), so it remains to prove (2). For this, it is enough to show that the intersection of $\Gr_{G,\leq\mu}$ with the right hand side of \eqref{eqexpdesc} is closed in $\Gr_{G,\leq \mu}$. Since $S_{\{\lambda\}_P,P}$ is a group orbit, its closure is a union of other $S_{\{\lambda'\}_P,P}$ for the same $P$. For $x\in S_{\{\lambda\}_P,P}$ we have $\kappa_M(\pr_M(x))=\lambda^{\sharp_M}\geq_P \kappa_M(b')$ where $a\geq_P a'$ for $a,a'\in \pi_1(M)_{\Gamma}$ if $a-a'$ is a non-negative linear combination of coroots for the unipotent radical of $P$. The condition $\kappa_M(\pr_M(x))\geq a$ for some fixed $a$ being a closed condition, we see that also $(\lambda')^{\sharp_M}\geq_P \kappa_M(b')$ for all $S_{\{\lambda'\}_P,P}\subseteq \overline{S_{\{\lambda\}_P,P}}$. In other words, $[v(\lambda')]\geq [b']$ for these $\lambda'$. In particular, for $P$ and $\{\lambda\}$ as on the right hand side of \eqref{eqexpdesc}, $\overline{S_{\{\lambda\}_{P},P}(C)}\cap \Gr_{G,\leq\mu}(C)$ is contained in the right hand side. Hence the intersection of $\Gr_{G,\leq\mu}$ with the right hand side of \eqref{eqexpdesc} is a finite union of subspaces of the form
\begin{equation}\label{41red}
\bigcup_{g\in G(F)}\overline{S_{\{g\lambda g^{-1}\}_{gPg^{-1}},gPg^{-1}}(C)}\cap \Gr_{G,\leq\mu}(C)
\end{equation}
for some fixed parabolic subgroup $P$ of $G$ and some fixed $\{\lambda\}_P$ with $[v(\lambda)]\geq [b']$. It remains to show that \eqref{41red} is a closed subspace. Furthermore, it is enough to take the union over all $g\in (G/P)(F)$.

By the same argument as in the proof of \cite[Prop. 8.2.1]{DOR}, one proves that $(G/P)(F)$ is a compact subspace of $G/P$. Let $x$ be a point in the closure of \eqref{41red}. Let $x_n$ be a sequence of elements of \eqref{41red} converging towards $x$ and let $g_n\in (G/P)(F)$ with $x_n\in S_{\{g_n\lambda g_n^{-1}\}_{g_nPg_n^{-1}},g_nPg_n^{-1}}(C)\cap \Gr_{G,\leq\mu}(C)$. Since $(G/P)(F)$ is compact, we may assume that the sequence $g_n$ has a limit $g$. The sequence $g_n^{-1}x_n$ converges to $g^{-1}x$, and is contained in the closed subspace $\overline{S_{\{\lambda\}_{P},P}(C)}\cap \Gr_{G,\leq\mu}(C)$. Hence $x\in \overline{S_{\{g\lambda g^{-1}\}_{gPg^{-1}},gPg^{-1}}(C)}\cap \Gr_{G,\leq\mu}(C)$.
\end{proof}
\begin{remark}\label{remdesc+}
Let $[b']\in B(G)_{\HN}$. Let $P$, $M$ and $b'_M$ be as in the definition of $B(G)_{\HN}$. 
Using the same argument as at the beginning of the proof of Theorem \ref{thmdescr}, we also obtain that 
\begin{equation*}
\Gr_{G}(C)^{\HN=[b']}=\bigcup_{g\in G(F)/P(F)}\bigcup_{\{\{\lambda\}_{gPg^{-1}}\mid [v(\lambda)]=[b']\}}S_{\{\lambda\}_{gPg^{-1}},gPg^{-1}}(C)
\end{equation*}
where the first union is taken over all parabolic subgroups of $G$.
\end{remark}

By definition, for any $G$ and $\{\mu\}$, the semi-stable Harder-Narasimhan stratum in $\Gr_{G,\mu}$ coincides with the weakly admissible locus $\Gr_{G,\mu,1}^{\wa}$ in the sense of \cite[Def.~4.1]{Viehmann20}. Indeed, if $x$ is in the semi-stable Harder-Narasimhan stratum then $\HN(\E_{1}, x)(\chi) = 0 $ for any parabolic subgroup $P$ and any character $\chi \in X^*(P / Z_G) $. Moreover, if $ \HN(\E_{1}, x) - v((\E_{1,x})_P) $ is a non-negative rational linear combination of positive absolute roots then for any $P$-dominant character $ \chi \in X^*(P / Z_G) $, we have $ \HN(\E_{1}, x)(\chi) - v((\E_{1,x})_P)(\chi) \geq 0 $ and hence $v((\E_{1,x})_P)$ is non-positive.

Furthermore, the theorem yields the following description of the complement of the weakly admissible locus.
\begin{cor}
Let $b_0\in B(G,\mu)$ be the basic element. Then $$\Gr_{G,\mu}\setminus\Gr_{G,\mu}^{\wa}=\bigcup_{P\subseteq G}\bigcup_{\{\{\lambda\}_P\mid [v(\lambda)]\nleq[b_0]\}}S_{\{\lambda\}_P,P}\cap \Gr_{G,\mu}.$$
\end{cor}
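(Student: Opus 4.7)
The plan is to reduce the corollary to Theorem \ref{thmdescr}(1), using the identification of the weakly admissible locus with the semi-stable Harder-Narasimhan stratum recalled just before the statement.

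First, I would translate the condition of lying in $\Gr_{G,\mu}^{\wa}$ into Harder-Narasimhan language. By the paragraph preceding the corollary, $\Gr_{G,\mu}^{\wa}$ is precisely the semi-stable HN stratum, i.e.\ the set of $x$ whose canonical parabolic reduction is to $P=G$; equivalently, $\HN(\E_{1},x)$ is central in $G$. Since $\HN(\E_{1},x)\in B(G,\mu)_{\HN}\subseteq B(G,\mu)$ by Proposition \ref{thmnonempty1}, and the unique element of $B(G,\mu)$ whose Newton vector is central in $G$ is the basic element $[b_0]$, we deduce $\Gr_{G,\mu}^{\wa}=\Gr_{G,\mu}^{\HN=[b_0]}$. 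Because $[b_0]$ is the minimum of $B(G,\mu)$ for the Newton order, the condition $x\notin\Gr_{G,\mu}^{\wa}$ is then equivalent to $\HN(\E_{1},x)\nleq[b_0]$.

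Second, I would prove both inclusions directly with the tools already used in the proof of Theorem \ref{thmdescr}(1). For the inclusion $\supseteq$: if $x\in S_{\{\lambda\}_P,P}\cap \Gr_{G,\mu}(C)$, then by \cite[Lemma~3.11(1)]{Viehmann20} the slope vector of the $P$-reduction $(\E_{1,x})_P$ coming from $\E_{1}^P$ equals $[v(\lambda)]$, and maximality of the canonical reduction (Proposition \ref{propHNparred}) gives $\HN(\E_{1},x)\geq[v(\lambda)]$. If one had $\HN(\E_{1},x)\leq[b_0]$, transitivity would force $[v(\lambda)]\leq[b_0]$, contradicting the assumption $[v(\lambda)]\nleq[b_0]$; hence $x\notin\Gr_{G,\mu}^{\wa}$. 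For the inclusion $\subseteq$: given $x\in\Gr_{G,\mu}$ with $\HN(\E_{1},x)\nleq[b_0]$, let $P$ be the parabolic subgroup produced by the canonical reduction and let $\{\lambda\}_P$ be the unique $P(\overline F)$-conjugacy class of cocharacters with $x\in S_{\{\lambda\}_P,P}$ (existence coming from the Iwasawa-type decomposition recalled before Proposition \ref{thmnonempty1}). Then $[v(\lambda)]=\HN(\E_{1},x)\nleq[b_0]$, showing $x$ is in the right-hand side.

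No step presents a real obstacle: the corollary is essentially a repackaging of Theorem \ref{thmdescr}(1) once one observes that the weakly admissible locus is exactly the HN stratum attached to the basic class in $B(G,\mu)$. The only mild point to be careful about is the direction of the inequality: one must use that $[b_0]$ is the Newton-minimum of $B(G,\mu)$ so that ``$\HN(\E_{1},x)\neq[b_0]$'' and ``$\HN(\E_{1},x)\nleq[b_0]$'' coincide for $x\in\Gr_{G,\mu}$.
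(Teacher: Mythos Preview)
Your proposal is correct and follows exactly the intended route: the paper states the corollary without proof, presenting it as an immediate consequence of Theorem~\ref{thmdescr}(1) together with the identification of the weakly admissible locus with the semi-stable Harder--Narasimhan stratum explained in the preceding paragraph. Your write-up simply makes explicit the two inclusions and the observation that $[b_0]$ is the minimum of $B(G,\mu)$, which is precisely what the paper leaves to the reader.
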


\begin{remark}
In general, $\Gr_{G,\mu}^{\HN\geq [b']}$ is not equal to the closure of $\Gr_{G,\mu}^{\HN=[b']}$. Indeed, let $G=\GL_7$, let $B$ be the Borel subgroup of upper triangular matrices and let $T$ be the diagonal torus. Let $\mu=(1,1,1,1,0,0,0)\in X_*(T)_{\dom}$. Since $\mu$ is minuscule, we may use the Bialynicki-Birula isomorphism $\BB_{\mu}$ (compare \eqref{eqBB}) to identify $\Gr_{G,\mu}$ with $\Fl(G,\mu)^{\diamond}$, the diamond corresponding to the flag variety for $G$ and $\mu$. Let $[b']\in B(G,\mu)$ with Newton vector $({\tfrac{2}{3}}^{(3)},{\tfrac{1}{2}}^{(4)})$ and let $[b'']\geq [b']$ with Newton vector $(1,{\tfrac{3}{5}}^{(5)},0)$. Since $G$ is split, both elements are in $B(G)_{\HN}$. Consider the standard parabolic subgroup $P$ corresponding to $\nu_{b''}$ and its standard Levi subgroup $M$, and let $\lambda_0=(-1,0,0,-1,-1,-1,0)\in X_*(M)$. Then $\lambda_0=w.(-\mu)_{\dom}$ for some $w\in S_7$ with $\ell(w)=6$. We have $S_{\{\lambda_0\}_P,P}\cap \Gr_{G,\mu}\subseteq \Gr_{G,\mu}^{\HN\geq [b'']}$. Let $\E_1$ be the trivial vector bundle of rank 7. 

We claim that on an open and dense subset $S'$ of $S_{\{\lambda_0\}_P,P}\cap \Gr_{G,\mu}$, any sub-vector bundle of $\E_{1,x}$ corresponding to a sub-isocrystal of $\E_1$ of rank 3 has degree $\leq 1$. Indeed, $\BB_{\mu}(S_{\{\lambda_0\}_P,P}\cap \Gr_{G,\mu})$ is the diamond associated with the Bruhat cell of $\Fl(G,\mu)$ for $w$, an irreducible subscheme of dimension $\langle 2\rho,\mu_{\dom}\rangle- \ell(w)=6$. Let $({\breve F}^n,1\cdot\sigma)$ be the isocrystal corresponding to $\E_1$, and let $N'$ be the sub-isocrystal generated by the first three standard basis vectors. Then the sub-vector bundle of $\E_{1,x}$ corresponding to $N'$ has degree $\geq 2$ if and only if $x\in \overline{S_{\{\lambda_1\}_P,P}\cap \Gr_{G,\mu}}$ where $\lambda_1=(-1,-1,0,-1,-1,0,0)$. In the same way as for $\lambda_0$, one sees that $\overline{S_{\{\lambda_1\}_P,P}\cap \Gr_{G,\mu}}$ corresponds to a closed subscheme of $\Fl(G,\mu)$ of dimension 2. In particular, its intersection with the scheme associated with $\BB_{\mu}(S_{\{\lambda_0\}_P,P}\cap \Gr_{G,\mu})$ is a closed subscheme of the latter which is of smaller dimension. The condition that any sub-vector bundle of $\E_{1,x}$ corresponding to a sub-isocrystal of $\E_1$ of rank 3 has degree $\leq 1$ is satisfied if and only if $x$ is in the complement of $$\bigcup_{g\in G(F)}\overline{S_{\{\lambda_1\}_P,P}\cap \Gr_{G,\mu}}.$$ In the same way as in the proof of Theorem \ref{thmdescr} we then see that its complement in $S_{\{\lambda_0\}_P,P}\cap \Gr_{G,\mu}$ is open and dense in the latter, which proves the claim.

On the other hand, $\Gr_{G,\mu}^{\HN=[b']}\subset \bigcup_{P'\subseteq G}\bigcup_{\{\{\lambda\}_{P'}\mid [v(\lambda)]=[b']\}}S_{\{\lambda\}_{P'},{P'}}(C)\cap \Gr_{G,\mu}(C)$ where $P'$ runs through the $G(F)$-orbit of the standard parabolic corresponding to $\nu_{b'}$. From the proof of Theorem \ref{thmdescr} we see that $\Gr_{G,\mu}^{\HN=[b'']}\subseteq \overline{\Gr_{G,\mu}^{\HN=[b']}}$ can only hold if for every $x\in \Gr_{G,\mu}^{\HN=[b'']}(C)$ there is a $P'$ and $\lambda$ as above such that $x\in \overline{S_{\{\lambda\}_{P'},{P'}}\cap \Gr_{G,\mu}}$. Fix some such $x$ and $P'$. Then $P'$ corresponds to a sub-vector bundle of $\E_1$ of rank 3 which defines a filtration in $\mathcal C$. Since $[v(\lambda)]=[b']$, for every $x'\in S_{\{\lambda\}_{P'},{P'}}\cap \Gr_{G,\mu}$, the corresponding sub-vector bundle of $\E_{1,x'}$ is of degree 2. Thus the same holds for every $x$ in the closure. In particular, every $x\in S'$ is not in the closure of $\Gr_{G,\mu}^{\HN=[b']}$, but lies in $\overline {\Gr_{G,\mu}^{\HN=[b'']}}$.

One can also explicitly construct points $x\in\Gr_{G,\mu}^{\HN=[b'']}(C)$ that lie in the closure of $\Gr_{G,\mu}^{\HN=[b']}$. Thus the closure of a Harder-Narasimhan stratum is in general not a union of strata.

A similar behaviour is shown by the Harder-Narasimhan stratification of \cite{DOR}. However, we could not find an example of this for minuscule $\mu$ (and therefore applicable to our theory) in the literature.
\end{remark}

\section{Harder-Narasimhan-strata in flag varieties, and classical points}\label{seccompDOR}

Let $\{\mu\}$ be a conjugacy class of cocharacters of $G_{\overline F}$, and let $\mu$ be a representative. Let $\Fl(G,\mu)$ be the flag variety for $G$ and $\{\mu\}$. In \cite[9.6]{DOR}, Dat, Orlik and Rapoport introduce a Harder-Narasimhan stratification of $\Fl(G,\mu)$. Their semi-stable stratum coincides with the weakly admissible locus or period domain of Rapoport and Zink.

By \cite[Prop. 19.4.2]{SW17} we have the natural Bialynicki-Birula map
\begin{equation}\label{eqBB}
\BB_{\mu}:\Gr_{G,\mu}\rightarrow \Fl(G,\mu)^{\diamond}
\end{equation}
to the diamond associated with $\Fl(G,\mu)$. It is an isomorphism if $\mu$ is minuscule. On the level of $C$-points, $\BB_{\mu}$ has the following simple description. Write $x\in \Gr_{G,\mu}(C)$ as $x=g\mu(\xi^{-1})G(\B+)/G(\B+)$ with $g\in G(\B+)$. Let $\bar g\in G(C)$ be the image of $g$ under the natural projection $G(\B+)\rightarrow G(C)$. Then $\BB_{\mu}(x)$ is the image of $\bar g$ in $\mathcal F\ell(G,\mu)$.

We fix any section $C\rightarrow \B+(C)$ of the reduction modulo $\xi$, and thus consider $ G(C)$ as a subset of $G(\B+) $. Restricting to this subset, we obtain a natural bijection 
\begin{equation}\label{eqbbc}
\BB_{\mu} : G(C) \mu (\xi^{-1}) G(\B+) / G(\B+) \longrightarrow \Fl(G,\mu)(C).
\end{equation}

%Let us recall the explicit description of $\BB_{\mu}$ on the level of $ C $-points when $ G = \GL_n $. In this case $ \mu $ is given by a tuple of integers $ (m_1, \dotsc, m_n) $ with $ m_1 \geq \cdots \geq m_n $. Then $ \Gr_{\GL_n,\mu} (C) $ parametrizes lattices $ \Xi \subset \BdR(C)^n $ of relative position $ (m_1, \dotsc, m_n)$ to the standard lattice $\B+(C)^n$. We can associate with such a lattice a descending filtration $ \Fil^{\bullet}_{\Xi} $ on the residue $ C $-vector space $ C^n = \B+ (C)^n / (\xi \B+ (C))^n $ with 
%\[
%\Fil^i_{\Xi} = \dfrac{\xi^i \Xi \cap \B+ (C)^n }{\xi^i \Xi \cap (\xi \B+ (C))^n}
%\]
%which gives us a point in $ \Fl(\GL_n,\mu)(C) $. 

\begin{thm}\label{thmcompDOR}
Assume that 
\begin{enumerate}
\item $x \in G(C) \mu (\xi) G(\B+) / G(\B+)$ for some section $C\rightarrow \B+(C)$ or
\item $x \in \Gr_{G,\mu}(C)$ for some minuscule $\mu$.
\end{enumerate}
Then the Harder-Narasimhan vector of $ (\mathcal{E}_1, x) $ equals the Harder-Narasimhan vector of $ (D, \BB_{\mu}(x)) $ \`a la \cite{DOR} where $D$ is the isocrystal associated with $ 1\in G(\breve F)$. Furthermore, the Harder-Narasimhan filtrations correspond to the same parabolic subgroup of $G$.
\end{thm}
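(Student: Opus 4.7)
The plan is to reduce to $G=\GL_n$ using the Tannakian characterisation of both HN formalisms, and then to compare the two degree functions directly on sub-isocrystals. I would first observe that both HN filtrations are characterised by their restrictions to every representation $(V,\rho)\in\Rep_F G$: on our side by Theorem \ref{itm : HN filtration - general reductive group} together with Construction \ref{const1}, on the DOR side by \cite[Thm.~5.3.1]{DOR}. The canonical parabolic subgroup is recovered in both cases as the tensor automorphism group of the resulting $G$-filtration via Saavedra's theorem, so if the $\GL(V)$-HN filtrations agree for every $(V,\rho)$, the associated parabolics agree as well. Hence it suffices to treat $G=\GL_n$ together with a single representation. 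For the reduction of (2) to (1), I would use that when $\mu$ is minuscule the Cartan decomposition yields $\Gr_{G,\mu}(C)=G(\B+)\mu(\xi)^{-1}G(\B+)/G(\B+)$; applying any section $C\to \B+$ of the reduction modulo $\xi$, one replaces the representative $g\in G(\B+)$ of $x$ by the lift of $\bar g\in G(C)$, so that $x$ is of the form considered in (1) (the $\mu(\xi)$ versus $\mu(\xi)^{-1}$ being a matter of sign convention for the Cartan cell).

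\emph{Comparison for $\GL_n$.} For $G=\GL_n$, both HN filtrations arise via the Andr\'e formalism from a rank and a degree function on a category of exact sub-objects. By Remark \ref{remexsqC}, strict sub-objects of $(\E_1,x)$ in $\mathcal C$ are in bijection with sub-isocrystals $D'\subseteq D=(\breve F^n,\sigma)$; since $b=1$, these are nothing other than $F$-subspaces $V'\subseteq F^n$. The analogous statement holds in the DOR category of filtered isocrystals. Both rank functions evaluate to $\dim_F V'$. Writing $x=g\mu(\xi)^{-1}G(\B+)/G(\B+)$ with $g\in G(C)\subset G(\B+)$ via the chosen section, the $\B+$-lattice $\Lambda_x=g\mu(\xi)^{-1}(\B+)^n\subset \BdR^n$ and the filtration on $C^n$ attached to $\bar g$ and $\mu$ via $\BB_\mu$ are linked by reduction modulo $\xi$. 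After diagonalising $\mu$, I would verify that for any $V'\subseteq F^n$ the relative position of $\Lambda_x\cap V'_{\BdR}$ in $V'\otimes_F\B+$ coincides with the jump sequence of the filtration induced on $V'_C$. The two degree functions then evaluate to the same integer (up to the sign convention of Definition \ref{polygon}), and the two HN filtrations coincide by uniqueness.

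\emph{Main obstacle.} The hard part is the compatibility of the induced-substructure construction with reduction modulo $\xi$: one must check that $\Lambda_x\cap V'_{\BdR}$ really is the image under $g\mu(\xi)^{-1}$ of the intersection of $V'_{\BdR}$ with the standard lattice, and that its type under $\BB_\mu$ corresponds to the type of the induced filtration on $V'_C$. This should follow from the fact that $g\in G(C)\subset G(\B+)$ is a $\xi$-unit and commutes with the inclusion $V'\hookrightarrow F^n$, so no correction terms appear when restricting to subspaces, and the claim reduces to the diagonal case where it becomes a direct computation with the weights of $\mu$ on $V'$.
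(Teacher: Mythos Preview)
Your overall architecture is the same as the paper's: reduce to $\GL_n$ via the Tannakian characterisation (Theorem \ref{itm : HN filtration - general reductive group} on our side, \cite[Thm.~5.3.1]{DOR} on theirs), and for $\GL_n$ show that the two degree functions agree on each sub-isocrystal $V'\subseteq \breve F^n$. The reduction of (2) to (1) via a section $C\to \B+$ is also what the paper uses, through the bijection \eqref{eqbbc}.

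However, your argument for the $\GL_n$ degree comparison has a genuine gap. You write that $g\in G(C)$ ``commutes with the inclusion $V'\hookrightarrow F^n$'' and that the claim therefore ``reduces to the diagonal case''. This is false: a general $g\in\GL_n(C)$ does \emph{not} preserve an arbitrary $F$-subspace $V'$, so $\Lambda_x\cap V'_{\BdR}$ is \emph{not} the image of $(V'\otimes\B+)$ under $g\mu(\xi)^{-1}$, and no diagonalisation of $\mu$ alone helps. The paper handles exactly this issue by using the Bruhat decomposition relative to the stabiliser $P$ of $V'$: one writes $g=p_1wp_2$ with $p_1\in P(C)$, $w\in W$, $p_2\in P_\mu(C)$. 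The factor $p_2$ is absorbed into $G(\B+)$ because $\mu(\xi)p_2\mu(\xi)^{-1}\in G(\B+)$, leaving the representative $p_1\cdot{}^w\mu(\xi^{-1})$. Now $p_1$ \emph{does} preserve $V'$, and the induced lattice on $V'_{\BdR}$ and the induced filtration on $V'_C$ are both computed from the image $m_1$ of $p_1$ in $\GL(V')$ together with the $V'$-component of ${}^w\mu$; their degrees visibly agree. This passage from the Iwasawa-type description of $\Lambda_x\cap V'_{\BdR}$ to the Bruhat-type description of $\BB_\mu(x)\cap V'_C$ is precisely the ``hard part'' you flag, and it does not follow from $g$ being a $\xi$-unit.

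A second, smaller point: for the Tannakian step you need that for every $(V,\rho)\in\Rep_F G$ the element $\rho(x)$ is again of the shape in (1). This is not automatic and the paper checks it explicitly by observing that the subcategory of $\overline{\mathcal C}$ with representatives in $\GL(V)(C)\cdot\mu_V(\xi^{-1})$ is closed under $\otimes$ and $\oplus$ (via the Kronecker product). You should state this.
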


\begin{proof}
	We use the Tannakian formalism. Thus, suppose first that $ G = \GL_n $ and that $ \mu $ is not necessary minuscule.
	
	Let $ (\mathcal{E}, x) $ be an object in $\overline{\mathcal{C}}$ such that $\E=\E_1$ is the trivial vector bundle of rank $n$ and $x$ has a representative in $\GL_n(C) \mu (\xi^{-1})$. It corresponds to the filtered isocrystal $ (\breve F^n, \Id_{\breve F^n} \sigma, \BB_{\mu}(x)) $ where we view $\BB_{\mu}(x)$ as a filtration on $\breve F^n$. Each sub-isocrystal $ (V, \Id_V\sigma) $ of $  (\breve F^n, \Id_{\breve F^n} \sigma) $ gives rise to a sub-filtered isocrystal $(V, \Id_V \sigma, \BB_{\mu}(x) \cap V \otimes_{\breve F} C)$ of $ (\breve F^n, \Id_{\breve F^n} \sigma, \BB_{\mu}(x)) $ and to a sub-object $ (\overline{\mathcal{E}}, \overline{x}) $ of $ (\mathcal{E}, x) $. More precisely, $ \overline{\mathcal{E}} $ is the trivial sub-vector bundle of $\E$ of rank $ \dim_{\breve F} V $ corresponding to the sub-isocrystal $(V,\Id_V\sigma)$ of $ (\breve F^n, \Id_{\breve F^n} \sigma)$. By Beauville-Laszlo's gluing theorem, $ \mathcal{E}^{\tri}_{\B+} \simeq \breve F^n \otimes_{\breve F} \B+ $ and $ \overline{\mathcal{E}}^{\tri}_{\B+} \simeq V \otimes_{\breve F} \B+ $. Then $\overline x$ is given by $ \overline{x} \cdot\overline{\mathcal{E}}^{\tri}_{\B+}  = V \otimes_{\breve F} \BdR \cap x\cdot(\B+)^n.$
	
	By definition we have that $ \rank (\overline{\mathcal{E}}, \overline{x}) = \rank(V, \Id_V \sigma, \BB_{\mu}(x) \cap V \otimes_{\breve F} C) = \dim_{\breve F} V $, that $ \deg (\overline{\mathcal{E}}, \overline{x}) = -\deg (\overline{x}) $ and that $ \deg (V, \Id_V \sigma, \BB_{\mu}(x) \cap V \otimes_{\breve F} C) = \deg (\BB_{\mu}(x) \cap V \otimes_{\breve F} C ) $. It remains to show that 
\begin{equation}\label{eqthmhnhn}
 \deg (\overline{x}) = \deg ( \BB_{\mu}(x) \cap V \otimes_{\breve F} C ) .
 \end{equation}
	
Let $P\subset G$ be the stabilizer of $V$, a parabolic subgroup of $G$. Because $V$ is stable under $\sigma$, the subgroup $P$ is defined over $F$. Its Levi quotient is $M=\GL(V)\times \GL(\breve F^n/V)$. We consider a representative of $x$ of the form $g\mu(\xi^{-1})$ with $g\in \GL_n(C)$. Using the Bruhat decomposition we can write $g=p_1wp_2$ with $p_1\in P(C)$, $p_2\in P_{\mu}(C)$ and $w\in W$, the Weyl group of $G$. Since $\mu(\xi)p_2\mu(\xi^{-1})\in G(\B+)$, we can replace the representative of $x$ by one of the form $p_1w\mu(\xi^{-1})$, or $p_1{}^w\mu(\xi^{-1})\in S_{\{{}^w\mu^{-1}\}_{M}}$. From this last description we see that $\bar x$ is of the form $m_1({}^w\mu(\xi^{-1}))_1$ where $m_1$ is the image of $p_1$ in the first factor of $M$, and $({}^w\mu(\xi^{-1}))_1$ is the corresponding image of ${}^w\mu(\xi^{-1})$. Its degree is the degree of $({}^w\mu(\xi^{-1}))_1$. On the other hand we have $\BB_{\mu}(x)=p_1wP_{\mu}(C)/P_{\mu}(C)$. Again we obtain that $\BB_{\mu}(x)\cap V\otimes_{\breve F}C$ is the filtration corresponding to $m_1({}^w\mu(\xi^{-1}))_1$, hence the two degrees agree as claimed.
	
Therefore our Harder-Narasimhan filtration of $ (\mathcal{E}, x)$ and the Harder-Narasimhan filtration of $ (\breve F^n, \Id_{\breve F^n} \sigma, \BB_{\mu}(x))$ \`a la \cite{DOR} are identified by the map $\BB_{\mu}$.
	
The full subcategory of $\overline{\mathcal{C}}$ with objects $(\E,x)$ where $x\in G(C)\mu(\xi^{-1})$ for some $\mu$ is stable under tensor products and direct sums. Indeed, let $ (\mathcal{E}, x) $ and $ (\mathcal{E}', x') $ be objects in $\overline{\mathcal{C}}$ such that $x = A \mu (\xi^{-1}) $ resp.~$x' = B \mu' (\xi^{-1}) $ where $ A \in \GL_n(C)$ resp.~$ B \in \GL_{n'}(C)$. Then $ x \otimes x' $ is represented by the Kronecker product of matrices $ A \mu (\xi^{-1}) \otimes B \mu' (\xi^{-1}) = ( A \otimes B ) ( \mu(\xi^{-1}) \otimes \mu'(\xi^{-1}) )$. Similarly, $ (\mathcal{E}, x) \oplus (\mathcal{E}', x') =(\E\oplus \E', x\oplus x')$ with $ x \oplus x' \in \GL_{n+n'}(C)\cdot (\mu (\xi^{-1}), \mu' (\xi^{-1}) ) $.
	
Granted the above properties, we consider the case where $G$ is any reductive group. Let $ \mathcal{E}_1 $ be the trivial $G$-bundle and $ x \in G(C) \mu (\xi^{-1}) $. For every algebraic representation $ \rho : G \longmapsto \GL_V \in \Rep_{F}G $, the element $ \rho (x) $ belongs to the subset $ \GL_V(C) \rho (\mu) (\xi^{-1}) $. Hence the corresponding Harder-Narasimhan filtrations associated with $ (\rho(\mathcal{E}_1), \rho(x)) $ \`a la \cite{DOR} and by our setting coincide. In other words, the Harder-Narasimhan filtrations of $ (\mathcal{E}_1, x) $ \`a la \cite{DOR} and by our setting coincide.  
\end{proof}

\begin{remark}\label{remcompshen}
This theorem crucially uses its assumptions (1) or (2) and is not true in general. The key point is that under this assumption we can directly compare the Iwasawa decomposition and the Bruhat decomposition of the given element $g\mu(\xi^{-1})$ to prove \eqref{eqthmhnhn}. For non-minuscule $\mu$, even the weakly admissible locus in $\Gr_{G,\mu}$ (i.e., the semi-stable or basic Harder-Narasimhan stratum) does not coincide with the inverse image under $\BB_{\mu}$ of the weakly admissible locus in $\Fl(G,\mu)$ as in \cite{DOR}. An explicit example for this is given in \cite[Ex.~4.10]{Viehmann20}.
\end{remark}

We apply this to study classical points of $\Gr_{G,\mu}$, are defined analogously to the usual notion of classical points on flag varieties.

\begin{definition}
A classical point of $\Gr_G$ is a $K$-valued point for some finite extension $K$ of $\breve F$.
\end{definition}
\begin{remark}
Classical points are a particular case of the points we consider in Theorem \ref{thmcompDOR}. Indeed, let $K$ be a finite extension of $\breve{F}$. Then the $K$-valued points of $\Gr_{G}$ are $C$-valued points (for some algebraically closed complete extension of $K$) that are invariant under $\Gal(C|K)$. Thus each such point has a representative  in $G(K)\mu(\xi)^{-1}$ for some choice of a Galois-equivariant section $K\rightarrow \B+(K)$. Notice that any such section identifies $K$ with $\B+(C)^{\Gal(C|K)}\cong K$.

Recall from \eqref{eqbbc} that for all $\mu$, the Bialynicki-Birula map induces a bijection $$G(K)\mu(\xi^{-1})G(\B+(K))/G(\B+(K))\rightarrow \Fl(G,\mu)(K).$$ 
\end{remark}
We now prove a generalization of \cite[Theorem 5.2]{Viehmann20}, where a similar comparison was shown for basic $[b']$.

\begin{prop}\label{propclpt}
Let $x$ be a classical point of $\Gr_{G,\mu}$. Then for $[b']\in B(G,\mu)$ the following are equivalent.
\begin{enumerate}
\item $x\in \Gr_{G,\mu}^{[b']^*}$
\item  $x\in \Gr_{G,\mu}^{\HN=[b']}$
\item the Harder-Narasimhan vector of $\BB_{\mu}(x)\in \Fl(G,\mu)$ in the sense of \cite{DOR} is equal to $\nu_{b'}$.
\end{enumerate}
\end{prop}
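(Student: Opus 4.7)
My plan is to establish the chain $(2) \Leftrightarrow (3) \Leftrightarrow (1)$, reducing the general case to the basic case [Viehmann20, Thm.~5.2] via passage to a Levi subgroup. The equivalence $(2) \Leftrightarrow (3)$ is immediate from Theorem \ref{thmcompDOR}(1): the hypothesis that $x$ is a classical point provides exactly a representative in $G(K)\mu(\xi^{-1})G(\B+(K))/G(\B+(K))$, and that theorem identifies our Harder-Narasimhan formalism on $(\E_1, x)$ with the Dat-Orlik-Rapoport one on $\BB_\mu(x)$, preserving both slope vectors and canonical parabolic reductions.

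For $(2) \Rightarrow (1)$, I would consider the canonical reduction of $(\E_1, x)$, with parabolic $P$ and chosen Levi $M$; Construction \ref{const1} produces an $M$-modification $x_M$ of the trivial $M$-bundle with $x_M \in \Gr_{M, -\lambda}$ for some $\{\lambda\}_M$ such that $(P, \{\lambda\}_P)$ is the Harder-Narasimhan type of $(\E_1, x)$. First, one shows that $x_M$ is again a classical point (now of $\Gr_M$): writing a classical representative $g\mu(\xi^{-1})$ with $g \in G(K)$ and applying the Bruhat/Iwasawa decomposition over $K$ along $P$, one obtains an $M(K)$-representative for $x_M$ of the form $m(-\lambda)(\xi^{-1})$. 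Second, since by assumption the slope vector $\nu_{b'}$ is central in $M$, the pair $(\E_{1_M}, x_M)$ is semi-stable in the $M$-version of $\mathcal{C}$ and hence lies in $\Gr_{M, -\lambda, 1}^{\wa}$. Applying the basic case [Viehmann20, Thm.~5.2] now to $M$ yields that the Newton class of $(\E_{1_M}, x_M)$ is basic in $B(M)$ with invariant $\kappa_M = -\lambda^{\sharp_M} = \kappa_M(b'_M)$, i.e., it is exactly the reduction $b'_M$ of $[b']$ to $M$ featured in the description of $B(G,\mu)_{\HN}$. Pushing forward to $G$ gives $\Newt(\E_{1,x}) = [b']^*$.

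The converse $(1) \Rightarrow (2)$ is then essentially free. Starting from any classical $x$, the preceding argument (applied with the Harder-Narasimhan class $[c] \in B(G,\mu)_{\HN}$ of $x$ in place of $[b']$) shows $\Newt(\E_{1,x}) = [c]^*$, so the map $[c] \mapsto [c]^*$ from the HN class to the Newton class of $\E_{1,x}$ is well-defined on classical points. Hence if $\Newt(\E_{1,x}) = [b']^*$, then necessarily $[c] = [b']$, i.e., $\HN(\E_1, x) = \nu_{b'}$.

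The main obstacle I expect is the classicality of the $M$-reduction $x_M$: one needs to verify that the canonical reduction of a classical $G$-point remains classical when viewed as an $M$-point, which requires matching the section $K \to \B+(K)$ used to exhibit $x$ as classical with the Iwasawa decomposition over $K$ used to realize the $P$-reduction. The remaining bookkeeping (tracking $\kappa_M$-invariants through the basic-case theorem and the description of $B(G,\mu)_{\HN}$) is routine, mirroring the arguments already used in the proof of Proposition \ref{thmnonempty1}.
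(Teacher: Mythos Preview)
Your approach is essentially the paper's: reduce to the Levi $M$ of the canonical parabolic, check the $M$-projection $x_M$ is still classical via the Bruhat decomposition over $K$ (the paper does this by citing the argument proving \eqref{eqthmhnhn}), apply \cite[Thm.~5.2]{Viehmann20} to the semi-stable $M$-pair, and read off the Newton class. The equivalence $(2)\Leftrightarrow(3)$ via Theorem~\ref{thmcompDOR} is likewise identical.

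One step you gloss over is not automatic. The sentence ``pushing forward to $G$ gives $\Newt(\E_{1,x}) = [b']^*$'' presumes that the isomorphism $(\E_{1,x})_P \times^P M \cong \E^M_{(b')^{-1}}$ determines the isomorphism class of $\E_{1,x}$ itself; but a $P$-bundle need not split to its Levi quotient, so a priori $\E^M_{(b')^{-1}} \times^M G$ has no reason to be $\E_{1,x}$. The paper closes this by invoking \cite[Cor.~2.9, Thm.~2.7]{MC}: since the slope vector of the reduction equals $\HN(\E,x)=\nu_{b'}$, which is $P$-dominant, the $P$-reduction does split, and hence $\E^M_{(b')^{-1}}$ is a genuine $M$-reduction of $\E_{1,x}$, forcing $\E_{1,x}\cong \E_{(b')^{-1}}^G$. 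Your identified ``main obstacle'' (classicality of $x_M$) is handled exactly as you suggest and is not where the difficulty lies.
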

\begin{proof}
We first prove equivalence of (1) and (2). We consider the canonical reduction $(\E_{1,x})_P$ of $(\E_1,x)$ where $P$ denotes the associated parabolic subgroup of $G$. Let $[b']:=\HN(\E_1,x)\in B(G,\mu)$. Then we have to show that $\E_{1,x}\cong \E_{[b']^*}$. 

Let $M$ be a Levi subgroup of $P$, and let $b'$ be a representative of $[b']$ in $M$ such that $P$ is the parabolic subgroup associated with the Newton point of $b'$. Using the same argument as in the proof of \eqref{eqthmhnhn}, we obtain a representative of $x$ of the form $p\mu(\xi^{-1})$ with $p\in P(K)$. Let $m$ be the image of $p$ in $M(K)$. By \cite[Lemma 3.10]{Viehmann20} we have $(\E_{1,x})_P\times^P M=((\E_{1}^P)\times^P M)_{m}=\E^M_{1,m}$. Isoc-filtrations of $(\E_{1,x})_P\times^P M$ (to parabolic subgroups of $M$) induce isoc-filtrations of $\E_{1,x}$ to corresponding parabolic subgroups of $G$ contained in $P$. From the maximality of $\HN(\E_1,x)$ we obtain that $(\E_1^M,m)$ is semi-stable (i.e., weakly admissible). By \cite[Thm.~5.2]{Viehmann20}, it is also admissible. In other words, $(\E_{1,x})_P\times^P M\cong \E_{[b']^*}^M$. The slope vector (in Fargues-Fontaine's sense) of $(\E_{1,x})_P\times^P M$ equals $\HN(\E,x)$ and thus is dominant with respect to $P$. By \cite[Cor.~2.9, Thm.~2.7]{MC} this implies that $(\E_{1,x})_P\times^P M$ is a reduction of $\E_{1,x}$ to $M$, that is $\E_{1,x}\cong \E_{[b']^*}^G$.

The equivalence of (2) and (3) follows from Theorem \ref{thmcompDOR}.
\end{proof}

As an application we can determine the non-emptiness pattern for \hn strata in the sense of \cite{DOR}, for the case that $b=1$.
\begin{prop}\label{propnonemptyDOR}
 Let $\{\mu\}$ be a conjugacy class of \textit{not necessary minuscule} cocharacters of $G_{\overline F}$. Then the \hn stratum for some Harder-Narasimhan vector $\nu$ in $\Fl(G,\mu,1)$  in the sense of \cite{DOR} is non-empty if and only if $\nu=\nu_{b'}$ for some $[b']\in B(G,\mu)$ satisfying the following condition. There is a parabolic subgroup $P$ of $G$ with a Levi subgroup $M$ such that $[b']$ has a representative $b'_M$ in $M$ that is basic in $M$, and such that $P$ is the parabolic subgroup associated with $\nu_{b'_M}$. Furthermore, there is a $\lambda\in X_*(P)$ in the conjugacy class of $-\mu$ and such that $\kappa_M(b'_M)=\lambda^{\sharp_M}\in \pi_1(M)_{\Gamma}$.
\end{prop}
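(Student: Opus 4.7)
The strategy is to reduce the statement to Proposition \ref{thmnonempty1}, concerning the non-emptiness of Harder-Narasimhan strata in $\Gr_{G,\mu}$ for $b=1$, by passing between classical points of $\Gr_{G,\mu}$ and $\Fl(G,\mu)$ via the bijection \eqref{eqbbc} and using the compatibility of Harder-Narasimhan vectors at classical points supplied by Theorem \ref{thmcompDOR}(1). Since the two stratifications do not coincide away from classical points (Remark \ref{remcompshen}), this is exactly the locus where one can transport information from one side to the other.

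For the ``if'' direction, suppose $[b']$ admits the data $(P, M, b'_M, \lambda)$ of the statement. After the relevant sign adjustment, this is precisely the situation of Proposition \ref{thmnonempty1}(2), whose hypothesis $(-\lambda)_{\dom}=\mu_{\dom}$ corresponds to $\lambda$ being in the $G$-conjugacy class of $-\mu$. The proof of that proposition produces a point $x\in\Gr_{G,\mu}(C)$ with $\HN(\mathcal E_1,x)=\nu_{b'}$ as the image in $\Gr_G$ of a point $x_M$ of the basic Newton stratum $\Gr_{M,-\lambda,1}^{[b'_M]^*}$. I plan to specialize this construction by taking $x_M = (-\lambda)(\xi^{-1})$, which is a classical point lying in the correct $M$-Newton stratum, and to use a Weyl element $w$ with $w(-\lambda)w^{-1}=\mu$ (available over a finite extension $K/F$ splitting the maximal torus) to rewrite the image as $x = w\mu(\xi^{-1})w^{-1}\in G(K)\mu(\xi^{-1})G(\B+)$, so that $x$ is a classical point of $\Gr_{G,\mu}(K)$. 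Applying $\BB_{\mu}$ via \eqref{eqbbc} yields a classical $y=\BB_{\mu}(x)\in\Fl(G,\mu)(K)$, and Theorem \ref{thmcompDOR}(1) shows the DOR Harder-Narasimhan vector of $y$ equals $\nu_{b'}$, proving non-emptiness of the stratum.

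For the ``only if'' direction, suppose $\Fl(G,\mu,1)^{HN=\nu}$ is non-empty. The key step is to produce a classical point $y$ inside this Dat-Orlik-Rapoport stratum; I expect this to be the principal obstacle. While classical points are dense in the open weakly admissible stratum, the analogous density in the deeper DOR HN strata for general (non-minuscule) $\mu$ is not stated in the cited references, and must be deduced either by a Tannakian reduction to Orlik's \cite{Orlik06} explicit combinatorial parameterization for $\GL_n$ (where classical points are manifestly dense in each stratum) using the formalism of Section \ref{secTannaka}, or by directly constructing a classical filtered isocrystal over a finite extension of $F$ realizing the given Harder-Narasimhan type from the combinatorial data. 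Granting such a classical $y$, let $x=\BB_{\mu}^{-1}(y)\in\Gr_{G,\mu}(K)$ be its preimage under \eqref{eqbbc}. Then Theorem \ref{thmcompDOR}(1) gives $\HN(\mathcal E_1,x)=\nu$, and Proposition \ref{thmnonempty1}(1) supplies $[b']\in B(G,\mu)_{\HN}$ with $\nu=\nu_{b'}$ together with data $(P,M,b'_M,\lambda_0)$ satisfying $(-\lambda_0)_{\dom}\leq\mu_{\dom}$. Because $x$ is classical of type exactly $\mu$, writing $x = g\mu(\xi^{-1})$ with $g\in G(K)\subset G(\B+)$ and performing the Iwasawa/Bruhat decomposition relative to the canonical parabolic $P$ (as in the proof of Theorem \ref{thmcompDOR}) shows that the $\lambda_0$ governing the $P$-cell of $x$ is a Weyl conjugate of $-\mu$; hence the inequality is an equality and $\lambda_0$ lies in the $G$-conjugacy class of $-\mu$, giving the required $\lambda$.

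The main technical point is therefore the existence of classical points in every non-empty DOR HN stratum of $\Fl(G,\mu,1)$ for general $\mu$. This is expected in analogy with Orlik's $\GL_n$ results but is not directly available in the cited references, so its extension to arbitrary reductive $G$ via a Tannakian argument or a direct combinatorial construction is where the real work of the proof lies; everything else is a bookkeeping translation through the Bialynicki-Birula bijection for classical points.
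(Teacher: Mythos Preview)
Your overall strategy is right---transport the question through the bijection \eqref{eqbbc} and Theorem \ref{thmcompDOR}---but both directions contain gaps, and in the ``only if'' direction you have manufactured an obstacle that does not exist.

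\textbf{The ``only if'' direction.} You identify as the main technical point the existence of a \emph{classical} point (over a finite extension of $F$) inside every non-empty DOR stratum. This is unnecessary. The bijection \eqref{eqbbc} holds for any complete algebraically closed $C$, and hypothesis (1) of Theorem \ref{thmcompDOR} does not require $x$ to be defined over a finite extension; it only requires $x\in G(C)\mu(\xi^{-1})G(\B+)/G(\B+)$ for some section $C\to\B+(C)$. So one simply takes any $C$-point $y$ of the non-empty DOR stratum, pulls it back to $x$ via \eqref{eqbbc}, and applies Theorem \ref{thmcompDOR}(1) directly to obtain $\HN(\E_1,x)=\nu$. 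The refinement that the resulting $\lambda$ lies in the $G$-conjugacy class of $-\mu$ (not merely $(-\lambda)_{\dom}\le\mu_{\dom}$) follows from the containment
\[
G(C)\mu(\xi^{-1})G(\B+)/G(\B+)\subset\coprod_{\{\lambda\}_P:\lambda\in\{-\mu\}_G}S_{\{\lambda\}_P,P},
\]
which is exactly what the Bruhat/Iwasawa comparison in the proof of \eqref{eqthmhnhn} gives. No density argument, no Tannakian reduction to \cite{Orlik06}, is needed.

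\textbf{The ``if'' direction.} Your specific choice $x_M=(-\lambda)(\xi^{-1})$ does not in general lie in the basic $M$-Newton stratum $\Gr_{M,-\lambda,1}^{[b'_M]^*}$: this point is always in the \emph{maximal} stratum of $B(M,-\lambda)$ (cf.\ Corollary \ref{corfarconj12}), which is basic only when $-\lambda$ is central in $M$. Since the argument of Proposition \ref{thmnonempty1}(2) uses membership in the basic Newton stratum to bound $\HN(\E_1,x)$ from above via Remark \ref{remcompHNNewt}, your construction does not yield $\HN(\E_1,x)=\nu_{b'}$ in general. The paper instead produces the needed classical point by invoking \cite[9.5.10]{DOR}: the semi-stable (weakly admissible) locus in $\Fl(M,-\lambda,1)$ is non-empty and contains classical points. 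For such a point, Proposition \ref{propclpt} shows it lies in the basic $M$-Newton stratum, and then the induction-to-$G$ argument goes through.
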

\begin{remark}
\begin{enumerate}
\item In \cite{Orlik06}, Orlik gave an analogous non-emptiness criterion for \hn strata in the sense of \cite{DOR} for $G=\GL_n$, and arbitrary $b$.
\item  For minuscule $\mu$, Theorem \ref{thmcompDOR} implies that $B(G,\mu)_{\HN}$ consists of those $[b']$ that satisfy the condition of Proposition \ref{propclpt}. We expect that for non-minuscule $\mu$, and already for $G=\GL_n$, the set of non-empty Harder-Narasimhan strata in $\Gr_{G,\mu}$ in our sense strictly contains the one for $\Fl(G,\mu)$ in the sense of \cite{DOR}.
\end{enumerate}
\end{remark}

\begin{proof}%[Proof of Proposition \ref{thmnonemptyDOR}]
Let $C = \widehat{\overline{F}}$ and consider the embedding $C\rightarrow \B+$ induced by the inclusion $ \overline{k} \hookrightarrow C^{\flat,\circ} $ where $\overline{k}$ is the residue field of $C$. Then we have a bijection $$ G(C)\mu(\xi^{-1})G(\B+)/G(\B+) \simeq \Fl(G, \mu) (C) $$ and from the proof of \eqref{eqthmhnhn} we see that for every parabolic subgroup $P$ of $G$ we have $$ G(C)\mu(\xi^{-1})G(\B+)/G(\B+) \subset \displaystyle \coprod_{ \{\lambda\}_P:\lambda\in \{-\mu\}_G} S_{\{\lambda\}_P,P}.$$ Thus the necessity of the claimed condition follows from Proposition \ref{thmnonempty1} and Theorem \ref{thmcompDOR}.

By \cite[9.5.10]{DOR}, the semi-stable stratum in $\Fl(M,-\lambda,1)$ is non-empty and open, and also has classical points. Let $x\in M(K)\lambda(\xi)M(\B+)/M(\B+)$ be the image of such a point, for some finite extension $K$ of $\breve F$. Then by Proposition \ref{propclpt}, $x$ is in the basic Newton stratum, which by  $\kappa_M(b'_M)=\lambda^{\sharp_M}$ is the Newton stratum for $[b'_M]^{*_M}_M$. Its image in $\Gr_G$ is a classical point $x_G$ of $\Gr_G$. By definition of $x_G$, the modified bundle $\E_{1,x_G}$ has a reduction $\E_{1,x_G,P}$ to $P$ such that the associated slope vector $\nu_{b'_M}$ is $P$-dominant and central in $M$. Further, $\E_{1,x_G,P}\times^P M=\E^M_{1,x}$ is semi-stable. Thus it is a reduction of $\E_{1,x_G}$, which implies that $x_G$ is a classical point in the Newton stratum for $[b']^*$. Again by Proposition \ref{propclpt} this shows that $\BB_{\mu}(x_G)$ is in the claimed Harder-Narasimhan stratum.
\end{proof}

\section{Newton strata and the Hodge-Newton decomposition}\label{secnewt}

\begin{lemma}\label{remcompHNNewt}
Let $\E$ be the trivial $G$-bundle on $X$ and let $x\in \Gr_{G,\mu}(C)$. Then
\begin{equation}\label{eqcomphnn}
\HN(\E,x)\leq \Newt(\E_x)^*.
\end{equation} 

In particular, for any $[b']\in B(G,-\mu)$, we have the two containments
\begin{eqnarray}
\label{eqnehn}\Gr_{G,\mu}^{[b']}&\subseteq &\bigcup_{[b'']^*\leq [b']} \Gr_{G,\mu}^{\HN=[b'']}\\
\label{eqhnne}\Gr_{G,\mu}^{\HN=[b']}&\subseteq &\bigcup_{[b'']^*\geq [b']} \Gr_{G,\mu}^{[b'']}.
\end{eqnarray}
\end{lemma}
\begin{proof}
 Recall that $\Newt(\E_{x})$ is defined via a different Harder-Narasimhan formalism, this time corresponding to all parabolic reductions of $\E_{1,x}$ (not necessarily corresponding to an isoc-filtration of $\E_1$), but for the same slope function. Thus the comparison theorem for this other Harder-Narasimhan theory implies \eqref{eqcomphnn}, and the other two assertions are an immediate consequence.
\end{proof}

However, Newton strata and Harder-Narasimhan strata are in general far from being equal. For minuscule $\mu$, a description of the set of all Newton strata in $\Gr_{G,\mu}$ containing points that are semistable in the sense of our present Harder-Narasimhan formalism is given by \cite[Thm.~1.3]{Viehmann20}. 

The following is an example of a non-basic Newton stratum in some $\Gr_{G,\mu}$ that is completely contained in the weakly admissible locus (i.e., in the basic or semistable Harder-Narasimhan stratum). This disproves an expectation expressed in \cite[9.7.2 (2)]{Far19}.

\begin{ex}
Let $G_0=\GL_5$, and $[b]\in B(G_0)$ superbasic of slope $2/5$. Let $G=G_b$ be the inner form of $G_0$ corresponding to $b$. Then $G$ does not have any proper parabolic subgroups. Thus the only non-empty Harder-Narasimhan stratum in any $\Gr_{G,\mu}$ is the weakly admissible locus, which then coincides with $\Gr_{G,\mu}$. However, there are in general many non-empty Newton strata. For example consider the minuscule cocharacter $\mu=(1,1,0,0,0)$. Then $B(G,-\mu)=\{[b]^*,[b']\}$ where $[b']$ corresponds to the element of $B(\GL_5)$ with Newton slopes $-\tfrac{1}{3}$ and $-\tfrac{1}{2}$ with multiplicities 3 and 2, respectively. Thus $\Gr_{G,\mu,1}^{[b']}$ is a non-basic and non-empty Newton stratum contained in the weakly admissible locus.

More generally, let $G$ be a reductive group over $F$, let $\{\mu\}$ be a conjugacy class of cocharacters, and let $[b']\in B(G,-\mu)$. Assume that there is no non-basic $[b'']\in B(G,\mu)_{\HN}$ with $[b'']^*\leq [b']$. Then by \eqref{eqcomphnn}, we obtain that $\Gr_{G,\mu,1}^{[b']}$ is contained in the weakly admissible locus.  
\end{ex}

\begin{ex}

Let us give an example illustrating that the condition in the previous example is, however, not necessary for $\Gr_{G,\mu,1}^{[b']}$ to be contained in the weakly admissible locus. We use the compatibilities with inner twists explained in Section \ref{seccompat}. Let $ G_0= \GL_{14} $, and let $[b] \in B(G) $ such that $ \mathcal{E}_b = \mathcal{O}(\frac{5}{7}) \oplus \mathcal{O}(\frac{5}{7}) $ where $ \mathcal{O}(\lambda) $ is the stable vector bundle of slope $ \lambda $. Let $G=G_b$ be the inner form of $G_0$ corresponding to $b$. Then the strict parabolic subgroups of $G$ are all in one $G(F)$-orbit, and the associated Levi quotient is the corresponding inner form of $\GL_7\times \GL_7$.
	
Let	$ \mu = (1^{(4)}, 0^{(10)}) $. We consider the weakly admissible locus $ \Gr_{G, \mu,1}^{\text{wa}} $. Let $[b'_1], [b'_2] \in B(G_b)$ such that $ \mathcal{E}_{b'_1}$ and $ \mathcal{E}_{b'_2}$ are the $G_b$-bundles corresponding to the $\GL_{14}$-bundles  $\mathcal{O}(\frac{3}{2})^2 \oplus \mathcal{O}(\frac{4}{5})^2 $ and $ \mathcal{O}(\frac{8}{7}) \oplus \mathcal{O}(\frac{6}{7}) $ via the inner twist. Then $ [b'_1]>[b'_2]$ and $\Gr_{G,\mu,1}^{[b'_2]}$ is not totally contained in the weakly admissible locus (for example, one can construct a classical point in this Newton stratum, which then has Harder-Narasimhan vector $-\nu_{b'_2}$). We claim that $\Gr_{G,\mu,1}^{[b'_1]}\subset \Gr_{G,\mu,1}^{\wa}$.
	
	Suppose that there exists an $x \in \Gr_{G,\mu,1}^{[b_1']}(C) \setminus \Gr_{G,\mu,1}^{\wa}(C)$.  Using the inner twist between $G$ and $\GL_{14}$ we obtain the following. Let $P$ denote the standard parabolic subgroup of $\GL_{14}$ whose standard Levi factor $M$ is $\GL_7\times \GL_7$. Let $[b_1]=[b'_1b]\in B(\GL_{14})$ be the class corresponding to  $\mathcal{O}(\frac{3}{2})^2 \oplus \mathcal{O}(\frac{4}{5})^2 $. Then there is a reduction of $b$ to $P(\breve {\mathbb Q}_p)$ such that the corresponding reduction of $\E_{b'_1}$ is violating the semi-stability condition.
	
	We know that $ (\mathcal{E}_b)_P \times^P M \simeq \mathcal{O}(\frac{5}{7}) \times \mathcal{O}(\frac{5}{7}) $ and $ (\mathcal{E}_{b'_1})_P \times^P M \simeq \mathcal{O}(\frac{5}{7})_{x_1} \times \mathcal{O}(\frac{5}{7})_{x_2} $ for some $ x_i\in \Gr_{\GL_7,\mu_i}(C)$ where $ \mu_i = (1^{(n_i)}, 0^{(7 - n_i)}) $ with $n_1 + n_2 = 4$. Since the reduction contradicts weak admissibility, we have $ n_1 > 2 $. Since $ \mathcal{O}(\frac{5}{7})_{x_1} $ is a sub-vector bundle of $ \mathcal{E}_{b_1} $, the biggest slope of $ \mathcal{O}(\frac{5}{7})_{x_1} $ is equal or smaller than $\frac{3}{2}$. Moreover, the comparison between the Harder-Narasimhan vector and the Newton polygon implies that $ n_1 < 4 $, hence $n_1 = 3$ and $n_2 = 1$.
	
	Since $\mathcal{O}(\frac{5}{7})_{x_1}$ can not contain $ \mathcal{O}(\frac{3}{2})^2 $, by \cite[Cor.~2.9]{MC}, the biggest slope of $\mathcal{O}(\frac{5}{7})_{x_2}$ is not smaller than $\frac{3}{2}$. However this is not true since $ \mu_2 = (1^{(1)}, 0^{(6)}) $.  
\end{ex}

In the remainder of this section we consider the exceptional cases where certain Harder-Narasimhan strata and Newton strata coincide. This is closely related to the concept of Hodge-Newton decomposability, which we recall from \cite[7.1]{Viehmann20} and reformulate for our context. The signs we use here differ from those in \cite{Viehmann20} since our $[b']$ is such that $\nu_{b'}$ plays the role of a Harder-Narasimhan vector (either for our theory or the one of Fargues-Fontaine) whereas in loc.~cit., $\nu_{b'}$ is a Newton vector of a $G$-bundle. 

To define the notion of Hodge-Newton decomposability, recall that the choice of an inner twisting between $G$ and a quasi-split inner form $H$ allows to identify the Newton chamber of $G$ and $H$, where the Newton chamber is the set of Galois-invariant $G(\overline{F})$-conjugacy classes of homomorphisms $\mathbb {D}_{\overline F}\rightarrow G_{\overline F}$. Applying this to both $\nu_{b'}$ and $\mu^{\diamond}$ (where $\mu^{\diamond}$ is the Galois average of $\mu$), a triple $(G,[b'],\mu)$ with $[b']\in B(G,\mu)$ is called Hodge-Newton decomposable if there is a proper standard Levi subgroup $M'$ of $H$ that contains the centralizer of the dominant Newton point $\nu_{b'}$ of $[b']$ and such that $\nu_{b',\dom}\leq_{M'} (\mu^{\diamond})_{\dom}$, i.e.~$(\mu^{\diamond})_{\dom}-(\nu_{b'})_{\dom}$ is a non-negative rational linear combination of positive coroots of $M'$. If $(G,\mu)$ is fixed, then we also say that $[b']$ is Hodge-Newton decomposable. 

Let $M$ be a Levi subgroup of (a parabolic subgroup of) $G$. From the Iwasawa decomposition we obtain a map $$\pr_M:\Gr_G(C)\rightarrow \Gr_M(C)$$ mapping $x\in \Gr_G(C)$ to the unique element $x_M$ with $x\in U(\BdR)x_M$ where $U(\BdR)$ is the unipotent radical of $P$. If $x\in S_{\{\lambda\}_P,P}$ for some cocharacter $\lambda$ of $P$, then $x_M\in \Gr_{M,\lambda_{M}}$ where $\lambda_{M}\in X_*(M)$ is a representative of the conjugacy class $\{\lambda\}_P$.

The following theorem is a variant of the Hodge-Newton decomposition, stated in terms of Harder-Narasimhan vectors, and generalized to not necessarily quasi-split groups.
\begin{thm}\label{thmhone}
Let $(G,[b'],\mu)$ be Hodge-Newton decomposable with respect to some $M'$.
\begin{enumerate}
\item There is a parabolic subgroup $P$ of $G$ with a Levi subgroup $M$ both defined over $F$ and such that $M$ is an inner form of $M'$. Further, there are representatives $b'_M$ and $\mu_M$ of $[b']$ and $\{\mu\}$ such that conjugation by $\mu_{M}$ on the unipotent radical of $P$ has only non-negative weights and such that $[b'_M]\in B(M, \mu_{M})$. Then the $M$-dominant Newton point of $b'_M$ is also $P$-dominant and $P$-regular, i.e., conjugation by $\nu_{b_M}$ on the unipotent radical of $P$ has only positive weights.
\item Let $x\in \Gr_{G,\mu,1}^{[b']^*}(C)$. Let $\E_{1,x}^P$ be the reduction of $\E_{1,x}$ corresponding to $b'_M\in M$. Then 
\begin{enumerate}
\item $\E_{1,x_M}^M$ is a reduction of $\E_{1,x}$ to $M$, and $\Newt(\E_{1,x_M}^M)=[b'_M]^*_M$.
\item Let $\E_1^P$ be the reduction to $P$ of $\E_1$ that corresponds to $\E_{1,x}^P$. Then $\E_{1}^P\times^P M$ is a reduction of $\E_{1}$ to $M$. In particular, $\HN(\E_1,x)\leq_M [b']$.
\item Choose $P$ within its $G(F)$-conjugacy class in such a way that $\E_1^P$ as in (b) is the natural reduction to $P$ of the trivial $G$-bundle $\E_1$. Then $x\in S_{\{\mu_M^{-1}\},P}(C)$. Let $x_M=\pr_M(x)$. Then $x_M\in \Gr_{M,\mu_M,1}^{[b'_M]^*_M}$.
\item $\HN(\E_1,x)$ is the image of $\HN(\E_1^M,x_M)\in B(M,\mu_M)_{\HN}$ in $B(G,\mu)$.
\item $\pr_M^{-1}(\{x_M\})\cap \Gr_{G,\mu,1}=\{x\}.$
\end{enumerate}
\end{enumerate}
\end{thm}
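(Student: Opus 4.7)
The plan is to reduce most of the statement to the Hodge-Newton decomposition theorem for modifications of the trivial $G$-bundle on the Fargues-Fontaine curve, and then to deduce the remaining assertions by translating between the affine Schubert and semi-infinite stratifications of $\Gr_G$.

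For part (1), I would first note that, since $\nu_{b'}$ is $\Gamma$-invariant and rational, its centralizer in $H$ is a standard Levi defined over $F$, and hence so is $M' \supseteq Z_H(\nu_{b'})$. Under the inner twist $G_{\breve F} \cong H_{\breve F}$ the Levi $M'$ corresponds to a Levi $M \subseteq G$ defined over $F$, because the twisting cocycle takes values in $Z_H(\nu_{b'}) \subseteq M'$. I would take $P \supseteq M$ to be the parabolic for which $\nu_{b'}$ is $P$-dominant, and choose $b'_M \in M(\breve F)$ with $[b'_M]_M$ basic and $\nu^M_{b'_M} = \nu_{b'}$; then $\nu_{b'}$ is automatically central in $M$ and $P$-regular by construction of $P$. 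Next I would pick $\mu_M \in X_*(M)$ a representative of $\{\mu\}_G$ in $M$: the Hodge-Newton inequality $\nu_{b',\dom} \leq_{M'} (\mu^{\diamond})_{\dom}$ guarantees such a representative exists, and functoriality of $\kappa$ together with $\kappa_G(b') = \mu^{\sharp_G}$ yields $\kappa_M(b'_M) = \mu_M^{\sharp_M}$, hence $[b'_M] \in B(M, \mu_M)$.

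For (2)(a) I would invoke the Hodge-Newton decomposition for modifications of the trivial $G$-bundle, producing a canonical $M$-reduction of $\E_{1,x}$ whose induced $M$-bundle has Newton class $[b'_M]^*_M$. For (2)(b), this reduction is in particular a $P$-reduction of $\E_{1,x}$, which via Beauville-Laszlo and the identification $\E_{1,x}|_{X \setminus \{\infty\}} \cong \E_1|_{X \setminus \{\infty\}}$ corresponds to a $P$-reduction $\E_1^P$ of $\E_1$. Since $\E_1$ is the trivial $G$-bundle, $\E_1^P$ is a trivial $P$-bundle, so after conjugating $P$ within its $G(F)$-orbit one may assume $\E_1^P$ is the tautological reduction of $\E_1$, and then $\E_1^P \times^P M$ is a reduction of $\E_1$ to $M$. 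Comparing the slope vector $\nu_{b'}$ of this $P$-reduction of $(\E_1, x)$ with the maximality of $\HN$ from Proposition \ref{propHNparred} will yield $\HN(\E_1, x) \leq_M [b']$.

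For (2)(c), with this normalization of $P$, the identification of $\E_1^P$ with the tautological reduction forces a representative of $x$ of the form $n \cdot m \cdot \mu_M(\xi^{-1}) \cdot k$ with $n \in N(\BdR)$, $m \in M(\B+)$, $k \in G(\B+)$, placing $x$ in $S_{\{\mu_M^{-1}\}_P, P}$ by the sign convention for semi-infinite cells; the Beauville-Laszlo data at $\infty$ then identifies $\E^M_{1, x_M}$ with the $M$-reduction from (a), so that $x_M \in \Gr_{M, \mu_M, 1}^{[b'_M]^*_M}$. For (2)(d), since $\HN(\E_1^M, x_M) \in B(M, \mu_M)_{\HN}$ by Proposition \ref{thmnonempty1} while Remark \ref{remcompHNNewt} gives $\HN(\E_1^M, x_M) \leq [b'_M]$, and since the basic class $[b'_M]$ is the minimum of $B(M, \mu_M)$, I would conclude $\HN(\E_1^M, x_M) = [b'_M]$; combined with (b), its image in $B(G, \mu)$ is $\HN(\E_1, x)$. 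Finally for (2)(e), the fiber $\pr_M^{-1}(\{x_M\})$ is an $N(\BdR)$-orbit, and the Hodge-Newton inequality translates into a weight condition for the adjoint action of $\mu_M$ on $\Lie N$ that constrains any $n \in N(\BdR)$ keeping $n \cdot x_M$ inside $\Gr_{G, \mu}$ to stabilize $x$. The principal technical hurdles will be the Hodge-Newton decomposition (2)(a), and secondarily the rigidity assertion (2)(e), which requires a direct weight analysis on $\Lie N$.
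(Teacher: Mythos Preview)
Your proposal has two genuine gaps.

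\textbf{Part (1): the inner twist argument is incorrect.} You assert that under the inner twist $G_{\breve F}\cong H_{\breve F}$ the Levi $M'$ transfers to a Levi $M\subseteq G$ defined over $F$ ``because the twisting cocycle takes values in $Z_H(\nu_{b'})\subseteq M'$.'' But the inner twist between $G$ and $H$ is realized by some fixed basic class $[b_0]\in B(G)$, and there is no a priori reason why $[b_0]$ admits a representative in $M'(\breve F)$. The paper handles this by first passing to the quasi-split form $H=G_b$, then constructing a \emph{specific} basic element $\tilde b\in \tilde M(\breve F)$ whose image in $B(H)$ corresponds to $[1]_G$; only then does the identification $G\cong H_{\tilde b}$ carry $\tilde M$ to an $F$-rational Levi of $G$. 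Your shortcut skips exactly this construction.

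A related problem: you choose $b'_M$ \emph{basic} in $M$. This forces $\nu_{b'}$ to be central in $M$, hence $M'=Z_H(\nu_{b'})$. But the theorem is stated for an arbitrary $M'$ containing $Z_H(\nu_{b'})$, so you are only treating a special case.

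\textbf{Part (2)(d): the argument collapses outside that special case.} Your proof of (d) uses that $[b'_M]$ is basic in $M$ to force $\HN(\E_1^M,x_M)=[b'_M]$. For general $M'$, $[b'_M]$ is not basic, $\HN(\E_1^M,x_M)$ can be strictly smaller than $[b'_M]$, and one must show that the canonical reduction of $(\E_1,x)$ is \emph{compatible} with the $P$-reduction coming from the Hodge--Newton decomposition, i.e.\ that the parabolic $P_x$ for $\HN(\E_1,x)$ is contained in (a conjugate of) $P$. The paper does this via a Schieder-type comparison (Theorem~\ref{thmcompareff}): the $P$-reduction of $\E_{1,x}$ has the same image in $\pi_1(M)_{\Gamma,\Q}$ as the canonical reduction, and the comparison theorem then forces it to be a coarsening of the canonical one. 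This step is entirely absent from your outline.

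Minor point on (2)(b): your claim that ``since $\E_1$ is trivial, $\E_1^P$ is a trivial $P$-bundle'' is in fact correct on the Fargues--Fontaine curve (because $H^0(X,\O_X)=F$ implies $(G/P)(X)=(G/P)(F)$), but you should say so; the paper instead proves $v^{\sharp_M}=0$ directly via the inequality chain~\eqref{eqhonedecchain}, which also yields the more precise statement $\HN(\E_1,x)^{\sharp_M}=\nu_{b'}^{\sharp_M}$ needed later.
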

\begin{proof}
Replacing $G$ by $G_{\ad}$ we may assume that $G$ is adjoint. Let $b\in G(\breve F)$ be a basic element such that the associated inner form $H=G_{b}$ is quasi-split. %Let $b'$ be a representative of $[b']$ and let $M'\subseteq G_{\overline F}$ be the base change to $\overline F$ of the Levi subgroup $M'$ in the definition of Hodge-Newton reducibility. Let $b'_M\in M$ be a representative of $[b']$. Further let $\mu_M\in M(\overline F)\cap \{\mu\}$ with $\mu_M^{\sharp_M}=\kappa_M(b'_M)$. 
Let $[\tilde b']\in B(H)$ be the image of $[b']$ under the induced isomorphism $B(G)\cong B(H)$. We choose a Borel subgroup and a maximal torus $H\supseteq B\supseteq T$. Let $\tilde M$ be the standard Levi subgroup of $H$ from the definition of Hodge-Newton decomposability. It contains the centralizer of the dominant Newton point of $[\tilde b']$, and is stable under the Frobenius $\sigma$. Let $\tilde b'\in\tilde M(\breve F)\cap[\tilde b']$ be such that its $\tilde M$-dominant Newton point $\nu_{\tilde b'}$ is dominant. Let $\tilde b\in \tilde M(\breve F)$ with $\kappa_{\tilde M}(\tilde b)=\kappa_{\tilde M}(\tilde b')-\mu_{\dom}^{\sharp_{\tilde M}}$ and such that $[\tilde b]_{\tilde M}\in B(\tilde M)$ is basic. We claim that $[\tilde b]_H$ corresponds to $[1]_G$ under the isomorphism $B(G)\cong B(H)$. Let $[b_0]_H\in B(H)$ be the image of $[1]_G$. We have $\kappa_G(b')=\mu^{\sharp_G}$ and hence $\kappa_H(\tilde b)=\kappa_H(\tilde b')-\mu^{\sharp_H}=\kappa_H(b_0).$ 
Further, $$\nu_{\tilde b}^{\sharp_{\tilde M}}=\nu_{\tilde b'}^{\sharp_{\tilde M}}-\mu_{\dom}^{\sharp_{\tilde M}}=\nu_{\tilde b'}^{\sharp_{\tilde M}}-\nu_{b',\dom}^{\sharp_{\tilde M}}=-\nu_b^{\sharp_{\tilde M}}=\nu_{b_0}^{\sharp_{\tilde M}}.$$ Since both $\nu_{\tilde b}$ and $\nu_{b_0}$ are central in $\tilde M$, they agree. Thus also $[\tilde b]_{H}=[b_0]_H$.

Let $G'=H_{b_0}$. Since $\tilde M$ and the corresponding parabolic subgroup $\tilde P=\tilde M B$ are stable under $b_0$ and under $\sigma$, the group $G'$ has a parabolic subgroup and Levi subgroup $P'=\tilde P_{b_0}\supseteq M'=\tilde M_{b_0}$ defined over $F$. Furthermore, the class in $B(G')$ corresponding to $[\tilde b']_H$ has a representative in $M'$ with analogous properties as in (1). Using that $G'=G_{bb_0}\cong G$, we also obtain (1) for $G$.

Now we prove (2). Assertion (a) follows from the corresponding properties of the canonical reduction. Let $\E_1^P$ be the reduction to $P$ of $\E_1$ that corresponds to $\E_{1,x}^P$ and let $v$ be the slope vector of $\E_1^P$. Since $\E_1$ is semi-stable, we have $v\leq 0$  where $0$ denotes the trivial slope vector. Let $\{\tilde \mu\}_M$ be such that $\pr_M(x)\in \Gr_{M,\tilde \mu}$. Since $x\in \Gr_{G,\mu}$ we have $\tilde{\mu}_{\dom}\leq\mu_{\dom}$. By \cite[Lemma 3.10]{Viehmann20} we have for the slope vector $v'$ of $\E_{1,x}$ that 
\begin{equation}\label{eqhonedecchain}
(v')^{\sharp_M}=\kappa_M(b'_M)=v^{\sharp_M}+\tilde{\mu}^{\sharp_M}\leq 0+\mu_M^{\sharp_M}=\nu_{b'}^{\sharp_M}=(v')^{\sharp_M}. 
\end{equation}
Thus equality holds at each step. This implies that $v^{\sharp_M}=0$. Hence $\E_1^P\times^P M$ is the trivial $M$-bundle, and a reduction of $\E_1^G$ to $M$. In particular, the reduction $\E_{1,x}^P$ also corresponds to a reduction of $(\E_1,x)$ to $P$, hence $\HN(\E_1,x)^{\sharp_M}\geq\nu_{b'}^{\sharp_M}$. Comparison with the Newton point of $\E_{1,x}$ also shows  $\HN(\E_1,x)^{\sharp_M}\leq\nu_{b'}^{\sharp_M}$, hence the last assertion of (b).

For quasi-split groups, (c) is shown in \cite[Prop. 7.8]{Viehmann20}. The general case is shown analogously, using the existence of $M$ and $P$ we proved above. A main ingredient of this proof and our proof of (e) below is \cite[Lemma 7.9]{Viehmann20}. Since for this lemma, the reformulation in the non-quasi-split case is not obvious, we prove its generalization to our context as Lemma \ref{lem79} below.

Let $P_x$ be the parabolic subgroup of $G$ corresponding to the canonical reduction of $(\E_1,x)$, and let $P_0\supseteq P_x$ be such that $P_0$ is conjugate to $P$. Let $M_0$ be its Levi quotient. For (d) we have to show that $\E_{1,x}^{P_0}=\E_{1,x}^P$ as reductions of $\E_{1,x}$. The reduction $\E_{1,x}^{P_0}$ has a slope vector $v_0$ whose image in $\pi_1(M_0)_{\Gamma}$ corresponds to $\kappa_M(\HN(\E_1,x))=\kappa_M(b'_M)$. Since $P_0$ and $P$ are conjugate, the proof of (1) shows that there is a $b\in P_0(\breve F)$ such that $[b]\in B(G)$ is basic and such that $G_b$ is quasi-split. Let $\tilde{\E}_{P_{0,b}}$ be the $P_{0,b}$-bundle corresponding to $\E_{1,x}^{P_0}$, a reduction of the $G_b$-bundle $\tilde \E$ corresponding to $\E_{1,x}$. The above condition on $v_0$ then translates into the condition that $\tilde{\E}_{P_{0,b}}$ satisfies the assumption of Theorem \ref{thmcompareff}. Since $M$ contains the centralizer of $\nu_{b'}$, the Theorem implies that $\tilde{\E}_{P_{0,b}}$ is a coarsening of the canonical reduction of $\tilde E$, and as such uniquely defined by its slope vector. Twisting back via $b^{-1}$, we obtain that $\E_{1,x}^{P_0}$ indeed agrees with $\E_{1,x}^P$ as reductions of $\E_{1,x}$, which proves (d).

(e) follows from Lemma \ref{lem79}.
\end{proof} 

\begin{lemma}\label{lem79}
Let $P\subseteq G$ be a parabolic subgroup of $G$ and let $M$ be a Levi factor. Let $\mu\in X_*(M)$ be $P$-dominant. Let $U$ be the unipotent radical of $P$. Let $\lambda\in X_*(M)$ with $\lambda^{\sharp_M}=\mu^{\sharp_M}\in \pi_1(M)_{\Gamma}$. Then
$$S_{\{\lambda\}_P,P}(C)\cap \Gr_{G,\mu}(C)\subseteq M(\B+)\mu(\xi)G(\B+)/G(\B+).$$
\end{lemma}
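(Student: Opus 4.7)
The plan is to combine an Iwasawa-decomposition analysis with the Tannakian formalism, reducing the problem to a Mazur-type inequality for $\B+$-lattices.

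First I would pick any $x\in S_{\{\lambda\}_P,P}(C)\cap\Gr_{G,\mu}(C)$ and, using the Iwasawa description recalled before the lemma, write $x = n\,m_0\,\lambda(\xi)\,G(\B+)/G(\B+)$ with $n\in U(\BdR)$ and $m_0\in M(\B+)$. The natural projection $\pr_M\colon\Gr_G\to\Gr_M$ coming from $P\twoheadrightarrow M$ then sends $x$ to $m_0\,\lambda(\xi)\,M(\B+)/M(\B+)\in\Gr_{M,\lambda_M}(C)$, where $\lambda_M\in X_*(M)$ is a representative of $\{\lambda\}_P$. The hypothesis $\lambda^{\sharp_M}=\mu^{\sharp_M}$ guarantees that $\pr_M(x)$ lies in the same connected component of $\Gr_M$ as any point of the $M$-Schubert cell $\Gr_{M,\mu}$, which pins down the $\kappa_M$-class of $\pr_M(x)$.

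By the Tannakian formalism of Section \ref{secTannaka}, the problem reduces to the case $G=\GL_n$ with $P$ a standard parabolic and $M=\prod_i\GL_{n_i}$. In this setting $x$ corresponds to a $\B+$-lattice $\Lambda\subset\BdR^n$ of Cartan type $\mu$, and the flag $F^\bullet$ of $\BdR^n$ stabilized by $P$ induces a filtration $\Lambda_\bullet$ of $\Lambda$ whose successive quotients $\Lambda_i/\Lambda_{i+1}$ are $\B+$-lattices in $F^i/F^{i+1}$ of cumulative types encoding $\lambda_M$. The crucial ingredient is a Mazur-type inequality: for each $P$-dominant character $\chi\in X^*(P/Z_G)$, one has $\sum_i\langle\chi,\lambda^{(i)}\rangle\leq\langle\chi,\mu\rangle$, with the two sides recording the ``Iwasawa degree'' and the ``Cartan degree'' of $x$ against $\chi$. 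The equality $\lambda^{\sharp_M}=\mu^{\sharp_M}$ forces equality on all characters factoring through $M^{\mathrm{ab}}$, while the $P$-dominance of $\mu$ forbids any strict inequality in the remaining $P$-dominant directions; consequently the two sides agree identically.

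The equality case of this Mazur inequality implies that the filtration $\Lambda_\bullet$ of $\Lambda$ is split in the sense that $\Lambda\cong\bigoplus_i\Lambda_i/\Lambda_{i+1}$ as $\B+$-modules, compatibly with a chosen Levi splitting of $F^\bullet$. Such a splitting yields an element $m\in M(\B+)$ together with the identity $x = m\,\mu(\xi)\,G(\B+)/G(\B+)$, which is the desired containment in $M(\B+)\mu(\xi)G(\B+)/G(\B+)$. The main obstacle is making the Mazur-type inequality and its equality case rigorous in the filtered $\B+$-lattice category; via the equivalence of categories between $\overline{\mathcal{C}}$ and $\HT^{\BdR}$ established in Section 3, this can be transported to the analogous classical statement for Hodge-Tate modules, but the bookkeeping along the $P$-dominant character directions --- which is exactly what turns the pointwise $\pi_1$-equality into a splitting of the filtration --- is the technical heart of the argument.
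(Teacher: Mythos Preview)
Your proposal misses the central difficulty of the lemma: the hypothesis is an equality in the Galois \emph{coinvariants} $\pi_1(M)_\Gamma$, yet any reduction to a split situation (whether by base change to $\overline F$ or by passing to $\GL_n$) requires the equality in $\pi_1(M)$ itself. This lift is the content of the paper's Claim and is not automatic. The argument runs as follows: the kernel $\pi_1(G,M)$ of $\pi_1(M)\to\pi_1(G)$ is free on the images of the simple coroots in $U$; since $P$, $M$, $U$ are all defined over $F$, the Galois group permutes these generators, so $\pi_1(G,M)_\Gamma$ is torsion-free and injects into $\pi_1(M)_\Gamma$. From $\lambda_{\dom}\le\mu_{\dom}$ and the $P$-dominance of $\mu$ one sees that $(\mu-\lambda)^{\sharp_M}\in\pi_1(G,M)$ is a nonnegative integral combination of these generators; its image in $\pi_1(G,M)_\Gamma$ is again such a combination, and it vanishes in $\pi_1(M)_\Gamma$ by hypothesis, hence vanishes in $\pi_1(G,M)_\Gamma$, hence in $\pi_1(G,M)$. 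Only then may one base-change to $\overline F$ and invoke the split result of Kottwitz. Your sketch never confronts this step.

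The Tannakian route you propose does not avoid it and introduces its own gap. Even granting the $\GL_n$ case, knowing for every $F$-representation $\rho$ that $\rho(x)\in M'(\B+)\rho(\mu)(\xi)\GL(V)(\B+)$ (with $M'$ the Levi of $\GL(V)$ containing $\rho(M)$) does not yield $x\in M(\B+)\mu(\xi)G(\B+)$, because $M'$ is strictly larger than $\rho(M)$ and the target set is not a Tannakian condition. Moreover, your equality-case argument for $\GL_n$ is incomplete: a splitting of the filtration $\Lambda_\bullet$ compatible with the Levi decomposition gives only $x\in M(\BdR)G(\B+)$, and the assertion that one lands in the particular coset $M(\B+)\mu(\xi)$ is precisely what Kottwitz's combinatorial lemma establishes---it does not follow from the splitting alone, since $\pr_M(x)$ has $M$-type $\lambda$, not $\mu$.
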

This lemma and its proof are a generalization of corresponding results for unramified groups in \cite{Kot03}.
\begin{proof}
The left hand side is empty unless $\lambda_{\dom}\leq \mu_{\dom}$, so we assume this.

{\it Claim.} We have $\lambda^{\sharp_M}=\mu^{\sharp_M}\in \pi_1(M)$, without taking coinvariants.

We compute the fundamental groups with respect to a maximal torus of $M_{\overline F}$ and a Borel subgroup $B\subseteq P_{\overline{F}}$. Let $\pi_1(G,M)$ be the kernel of the natural projection $\pi_1(M)\rightarrow \pi_1(G)$. In other words, we have a short exact sequence $$0\rightarrow\pi_1(G,M)\rightarrow \pi_1(M)\rightarrow\pi_1(G)\rightarrow 0.$$ Taking coinvariants, we get an exact sequence
$$\pi_1(G,M)_{\Gamma}\rightarrow \pi_1(M)_{\Gamma}\rightarrow\pi_1(G)_{\Gamma}\rightarrow 0.$$
The group $\pi_1(G,M)$ is a free abelian group generated by the images in $\pi_1(M)$ of the simple roots of $T$ in $U$. 
Since $P$, $M$ and $U$ are $\Gamma$-invariant, $\pi_1(G,M)_{\Gamma}$ is a free abelian group generated by the set of $\Gamma$-orbits on the above set of generators of $\pi_1(G,M)$, which are permuted under the action of $\Gamma$. Thus $\pi_1(G,M)_{\Gamma}$ is torsion-free, and maps injectively to $\pi_1(M)_{\Gamma}$. 

From $\lambda_{\dom}\leq \mu_{\dom}$ and the fact that $\mu$ is $P$-dominant, we obtain that $(\mu-\lambda)^{\sharp_M}\in \pi_1(M)$ is the image of a non-negative linear combination of positive coroots. In particular, it lies in the subgroup $\pi_1(G,M)$. We consider its image in $\pi_1(G,M)_{\Gamma}$. Any non-negative linear combination of positive coroots in $U$ is mapped to a similar linear combination in $\pi_1(G,M)_{\Gamma}$, and the image of $(\mu-\lambda)^{\sharp_M}$ vanishes if and only if $(\mu-\lambda)^{\sharp_M}$ itself is $0$ in $\pi_1(G,M)$. We assumed that $(\mu-\lambda)^{\sharp_M}$ is mapped to $0$ under $\pi_1(G,M)_{\Gamma}\hookrightarrow \pi_1(M)_{\Gamma}$. Hence $(\mu-\lambda)^{\sharp_M}=0$ in $\pi_1(M)$. This finishes the proof of the claim.

The remainder of the assertion not involving a Galois action, we may base change to $\overline F$, and apply \cite[Lemma 2.2]{Kot03} and its proof to conclude as in the case for split groups.
\end{proof}

From the theorem we obtain that the inclusion $M\hookrightarrow G$ induces natural maps $$\Gr_{M,\mu_M}^{[b']^*_M}(C)\rightarrow \Gr_{G,\mu}^{[b']^*}(C)$$ and, for $[b'']_M=\HN(\E_1^M,x_M)$, $$\Gr_{M,\mu_M}^{\HN=[b'']_M}(C)\rightarrow \Gr_{G,\mu}^{\HN=[b'']_G}(C)$$ that are bijections, and that are sections of the corresponding restrictions of $\pr_{M}$.

\begin{cor}\label{corhone1}
Let $x\in \Gr_{G,\mu}(C)$. Then $\Newt(\E_{1,x})^*\in B(G,\mu)$ is Hodge-Newton decomposable for some Levi $M'$ if and only if $\HN(\E_1,x)\in B(G,\mu)$ is Hodge-Newton decomposable for $M'$.
\end{cor}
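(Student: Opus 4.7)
The plan is to reduce both directions to the single statement that $x$ factors through the natural closed embedding $\Gr_{M, \mu_M} \hookrightarrow \Gr_G$, where $M \subseteq G$ denotes the $F$-Levi that is an inner form of $M'$ (with corresponding $F$-parabolic $P$) and $\mu_M \in X_*(M)$ is a $P$-dominant representative of $\{\mu\}$. Once such a factorization is established, the modification $\E_{1,x}$ becomes the pushforward of the $M$-modification $(\E_1^M, x_M)$ with $x_M = \pr_M(x)$, so both $\Newt(\E_{1,x})^*$ and $\HN(\E_1,x)$ will lie in the image of the canonical injection $B(M, \mu_M) \hookrightarrow B(G,\mu)$. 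This image is exactly the locus of Hodge-Newton decomposable classes with respect to $M'$, which will close both implications simultaneously.

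The forward direction is then essentially immediate from Theorem \ref{thmhone}: part (2)(c) supplies the factorization of $x$ through $\Gr_{M, \mu_M}$, while part (2)(d) identifies $\HN(\E_1,x)$ with the image in $B(G,\mu)$ of an element of $B(M,\mu_M)_{\HN}$, placing it in the Hodge-Newton decomposable locus with respect to $M'$.

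For the reverse direction, I would begin with the canonical HN-reduction of $(\E_1, x)$ supplied by Proposition \ref{thmnonempty1}: there is a parabolic $P_x$ with Levi $M_0$ such that $[b''_{M_0}]_{M_0}$ is basic, $x \in S_{\{\lambda\}_{P_x}, P_x}$, $\kappa_{M_0}(b''_{M_0}) = -\lambda^{\sharp_{M_0}}$, and $(-\lambda)_{\dom} \leq \mu_{\dom}$. Since $M_0$ equals the centralizer of $\nu_{b''}$ in $G$ and, by the assumption that $\HN(\E_1,x)$ is Hodge-Newton decomposable with respect to $M'$, this centralizer is contained in $M$, one may after replacing $P$ by a suitable $G(F)$-conjugate arrange $P_x \subseteq P$ and select a representative $\lambda \in X_*(M)$. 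The equality $(-\lambda)^{\sharp_M} = \kappa_M(b''_M) = \mu_M^{\sharp_M}$ in $\pi_1(M)_{\Gamma}$, together with the Hodge bound $(-\lambda)_{\dom} \leq \mu_{\dom}$, would then allow me to invoke Lemma \ref{lem79} to conclude that $x$ factors through $\Gr_{M, \mu_M}$, completing the argument via the reduction described in the first paragraph.

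The main obstacle will lie in this reverse direction, specifically in the careful bookkeeping of sign conventions between $\lambda$ and $-\lambda$, and between $\mu_M$ and $\mu_M^{-1}$, when transferring from the canonical HN-reduction output of Proposition \ref{thmnonempty1} to the hypotheses of Lemma \ref{lem79}, as well as in verifying that the chosen cocharacter representative actually lies in $X_*(M)$ and satisfies the required dominance condition.
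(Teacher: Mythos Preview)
Your forward direction (Newton decomposable $\Rightarrow$ HN decomposable) is fine and matches the paper: both you and the paper invoke Theorem~\ref{thmhone}(2), you via parts (c) and (d), the paper via part (b), which already records the conclusion $\HN(\E_1,x)\leq_M [b']$ directly.

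For the reverse direction your route is substantially heavier than necessary. You propose to re-establish a Hodge--Newton decomposition of $x$ starting from the canonical HN-reduction, pushing $P_x$ inside $P$ and then invoking Lemma~\ref{lem79} to force $x$ into the image of $\Gr_{M,\mu_M}$. This can be made to work (the key integrality $(-\lambda)^{\sharp_M}=\mu_M^{\sharp_M}$ in $\pi_1(M)_\Gamma$ follows because the difference lies in the torsion-free kernel of $\pi_1(M)_\Gamma\to\pi_1(G)_\Gamma$ and is rationally zero), but it essentially re-proves a variant of Theorem~\ref{thmhone} from the HN side. The paper instead uses a two-line squeeze argument: one always has $\HN(\E_1,x)\leq \Newt(\E_{1,x})^*\leq [\mu(\xi)]$ in $B(G,\mu)$, and if $(\mu^\diamond)_{\dom}-\nu_{\HN,\dom}$ is a non-negative combination of coroots of $M'$, then the intermediate difference $(\mu^\diamond)_{\dom}-\nu_{\Newt^*,\dom}$ is automatically such a combination as well (since a non-negative sum of positive coroots decomposing as a sum of two non-negative combinations of positive coroots, with total support in $\Delta_{M'}$, forces each summand to have support in $\Delta_{M'}$). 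No factorization of $x$ is needed at all. Your approach does buy the stronger geometric statement that $x$ actually lies in the image of $\Gr_{M,\mu_M}$, but this is not required for the corollary, and the sign and dominance bookkeeping you flag as ``the main obstacle'' disappears entirely in the paper's argument.
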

\begin{proof}
Assume that $\HN(\E_1,x)\in B(G,\mu)$ is Hodge-Newton decomposable for $M'$. Then $\HN(\E_1,x)\leq \Newt(\E_{1,x})^*\leq [\mu(\xi)]$ where the Newton point of $\mu(\xi)$ is $\mu^{\diamond}$. In other words, $\mu^{\diamond}-\HN(\E_1,x)$ is a non-negative rational linear combination of positive coroots of $M'$. We have a decomposition of  $\mu^{\diamond}-\HN(\E_1,x)$ into a sum of $\mu^{\diamond}-\Newt(\E_{1,x})^*$ and $\Newt(\E_{1,x})^*-\HN(\E_1,x)$, both of which are non-negative rational linear combinations of positive coroots of $G$. Thus both summands are non-negative rational linear combinations of positive coroots of $M'$, which implies that $\Newt(\E_{1,x})^*$ is also Hodge-Newton decomposable for $M'$. The other direction follows from Theorem \ref{thmhone} (2)(b).
\end{proof}

The following Corollary is a generalization of \cite[Conj.~1 (2)]{Far19}.
\begin{cor}\label{corfarconj12}
Let $[b']\in B(G,\mu)$ be Hodge-Newton decomposable with respect to the Levi subgroup $M'$ that is the centralizer of $\nu_{b'}$ in the quasi-split inner form of $G$. Then   
\begin{equation}\label{eqfarconj12}
\Gr_{G,\mu}^{[b']^*}\subseteq\Gr_{G,\mu}^{\HN=[b']}.
\end{equation}
For the maximal element $[\mu(\xi)]$ of $B(G,\mu)$, we have equality in \eqref{eqfarconj12}. 
\end{cor}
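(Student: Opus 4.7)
The plan is to apply Theorem \ref{thmhone} to reduce the question to a statement about the basic Newton stratum in $\Gr_{M,\mu_M}$, where $M \subseteq G$ is the Levi subgroup provided by the Hodge-Newton decomposition. The crucial input beyond Theorem \ref{thmhone} is the already-invoked fact (\cite[Thm.~5.2]{Viehmann20}, used in the proof of Proposition \ref{propclpt}) that admissibility implies weak admissibility for the basic Newton stratum.

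Let me sketch the first inclusion. Fix $x \in \Gr_{G,\mu}^{[b']^*}(C)$ and apply Theorem \ref{thmhone}(1) to obtain a parabolic $P \subseteq G$ with Levi $M$, where $M$ is an inner form of $M'$, together with representatives $b'_M$ and $\mu_M$ such that $[b'_M]_M \in B(M,\mu_M)$. Because $M'$ is by hypothesis the centralizer of $\nu_{b'}$, the Newton point $\nu_{b'_M}$ is central in $M$, so $[b'_M]_M$ is the unique basic class in $B(M,\mu_M)$. By Theorem \ref{thmhone}(2)(c), the projection $x_M = \pr_M(x)$ lies in the basic Newton stratum $\Gr_{M,\mu_M,1}^{[b'_M]^*_M}$.

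Next I would invoke \cite[Thm.~5.2]{Viehmann20}: the basic Newton stratum in $\Gr_{M,\mu_M,1}$ coincides with the admissible locus, and is in particular contained in the weakly admissible locus $\Gr_{M,\mu_M}^{\wa}$, which by definition is the semi-stable Harder-Narasimhan stratum. Consequently $\HN(\E_1^M, x_M) = [b'_M]_M$. Applying Theorem \ref{thmhone}(2)(d), the image of $[b'_M]_M$ in $B(G,\mu)$ under the map induced by $M \hookrightarrow G$ is precisely $[b']$, so $\HN(\E_1,x) = [b']$. This proves $x \in \Gr_{G,\mu}^{\HN=[b']}$, giving the first inclusion.

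For the equality statement when $[b'] = [\mu(\xi)]$ is the maximal element of $B(G,\mu)$, the reverse inclusion follows immediately from \eqref{eqcomphnn}: any $x$ with $\HN(\E_1,x) = [\mu(\xi)]$ satisfies $[\mu(\xi)] = \HN(\E_1,x) \leq \Newt(\E_{1,x})^*$, and maximality of $[\mu(\xi)]$ in $B(G,\mu)$ forces $\Newt(\E_{1,x})^* = [\mu(\xi)]$, i.e., $x \in \Gr_{G,\mu}^{[\mu(\xi)]^*}$. (If $\mu$ is central then $B(G,\mu)$ is a singleton and there is nothing to show.) There is essentially no obstacle in this proof; the only points requiring attention are to verify that the centralizer hypothesis on $M'$ really forces $[b'_M]_M$ to be basic in $M$, so that the semi-stability of $x_M$ in $\Gr_{M,\mu_M,1}$ is exactly what \cite[Thm.~5.2]{Viehmann20} provides, and then to propagate this semi-stability back to $G$ via part (d) of Theorem \ref{thmhone}.
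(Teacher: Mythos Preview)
Your proof is correct and follows essentially the same approach as the paper's. The only remark is that your invocation of \cite[Thm.~5.2]{Viehmann20} is unnecessary and slightly misattributed: that result concerns the \emph{equivalence} of admissibility and weak admissibility for classical points, whereas all you need here is the inclusion ``basic Newton stratum $\subseteq$ basic Harder-Narasimhan stratum'', which is precisely \eqref{eqcomphnn} together with minimality of the basic class in $B(M,\mu_M)$---exactly the argument the paper uses.
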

\begin{proof}
 Let $x\in \Gr_{G,\mu}^{[b']^*}(C).$ From the theorem we obtain a parabolic subgroup $P$ with Levi subgroup $M$ such that $M$ is an inner form of $M'$ and $x_M=\pr_M(x)$ such that both $\HN(\E_{1},x)$ and the Newton point of $\E_{1,x}$ can be computed from the corresponding invariants for $(\E_1^M,x_M)$. Because $M'$ is the centralizer of the Newton point of $[b']$, we obtain that $\E_{1,x_M}^M$ is semi-stable, or in other words in the basic Newton stratum. Then by \eqref{eqcomphnn} together with minimality of the basic Newton point we obtain that $\HN(\E_1^M,x_M)=[b'_M]$, which finishes the proof of \eqref{eqfarconj12}.

Let now $[b']$ be maximal in $B(G,\mu)$. Then $[b']=[\mu(\xi)]$ clearly satisfies the assumption of the first assertion of the corollary. It remains to show that the reverse containment in \eqref{eqfarconj12} also holds for this $[b']$. But this follows immediately from \eqref{eqhnne} together with the maximality of $[b']$.
\end{proof}

Our next result proves \cite[Conj.~1 (1)]{Far19}.
\begin{prop}\label{propfullyhn}
The following are equivalent.
\begin{enumerate}
\item The Newton stratification and the Harder-Narasimhan stratification on $\Gr_{G,\mu,1}$ coincide.
\item The basic Newton stratum in $\Gr_{G,\mu,1}$ agrees with the weakly admissible locus.
\item $(G,\mu)$ is fully Hodge-Newton decomposable in the sense of  \cite[Def.~3.1]{GHN}.
\end{enumerate}
\end{prop}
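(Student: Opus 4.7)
The plan is to verify the cycle $(1) \Rightarrow (2) \Rightarrow (3) \Rightarrow (1)$. The implication $(1) \Rightarrow (2)$ is immediate from the definitions: by construction, the weakly admissible locus in $\Gr_{G,\mu,1}$ is the Harder-Narasimhan stratum for the basic element $[b_0] \in B(G,\mu)$, while the basic Newton stratum is $\Gr_{G,\mu,1}^{[b_0]^*}$, so hypothesis $(1)$ identifies them. For $(2) \Rightarrow (3)$ I would invoke the main theorem of \cite{CFS}: the basic Newton stratum in $\Gr_{G,\mu,1}$ coincides with the admissible locus (the locus on which $\E_{1,x}$ is semi-stable), so $(2)$ becomes the equality between the admissible and the weakly admissible loci, which by loc.~cit.~is equivalent to $(G,\mu)$ being fully Hodge-Newton decomposable.

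The core of the argument is $(3) \Rightarrow (1)$, where I would first establish the pointwise inclusion $\Gr_{G,\mu,1}^{[b']^*} \subseteq \Gr_{G,\mu,1}^{\HN=[b']}$ for every $[b'] \in B(G,\mu)$. For non-basic $[b']$, fully Hodge-Newton decomposability guarantees that $(G,[b'],\mu)$ is Hodge-Newton decomposable with respect to the centralizer of $\nu_{b'}$ in the quasi-split inner form of $G$, so Corollary \ref{corfarconj12} applies directly and yields the inclusion. For the basic element $[b_0]$ I would argue separately using the general inequality \eqref{eqcomphnn}: any $x \in \Gr_{G,\mu,1}^{[b_0]^*}$ satisfies $\HN(\E_1,x) \leq [b_0]$, and since $[b_0]$ is the minimum of $B(G,\mu)$ under the partial order while $\HN(\E_1,x)$ lies in $B(G,\mu)_{\HN} \subseteq B(G,\mu)$ by Proposition \ref{thmnonempty1}, this forces $\HN(\E_1,x) = [b_0]$.

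To conclude $(1)$, I would compare the two decompositions: $\{\Gr_{G,\mu,1}^{[b']^*}\}_{[b'] \in B(G,\mu)}$ and $\{\Gr_{G,\mu,1}^{\HN=[b']}\}_{[b'] \in B(G,\mu)_{\HN}}$ are both disjoint partitions of $\Gr_{G,\mu,1}$, and every Newton stratum is non-empty. The piecewise inclusion together with non-emptiness forces $B(G,\mu)_{\HN} = B(G,\mu)$, and then disjointness of the Harder-Narasimhan stratification upgrades each inclusion to an equality, which is precisely $(1)$. The main obstacle is the invocation of \cite{CFS} for $(2) \Rightarrow (3)$; the internal parts of the argument use only Corollary \ref{corfarconj12}, the minimality of the basic stratum in $B(G,\mu)$, and the partition-comparison step.
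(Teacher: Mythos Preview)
Your overall cycle $(1)\Rightarrow(2)\Rightarrow(3)\Rightarrow(1)$ matches the paper's, and the partition-comparison step at the end is fine. There is, however, a genuine gap in your $(3)\Rightarrow(1)$ argument. You write that ``fully Hodge-Newton decomposability guarantees that $(G,[b'],\mu)$ is Hodge-Newton decomposable with respect to the centralizer of $\nu_{b'}$''. This is \emph{not} what fully Hodge-Newton decomposable means: by definition (and by the characterization quoted just after the proposition) it only says that each non-basic $[b']$ is Hodge-Newton decomposable with respect to \emph{some} proper standard Levi $M'$ containing the centralizer of $\nu_{b'}$. Corollary~\ref{corfarconj12}, on the other hand, requires $M'$ to be exactly the centralizer of $\nu_{b'}$, and this is what makes the reduction to the semi-stable case in $M$ work. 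The paper isolates precisely this strengthening as a separate \emph{Claim} and proves it by a short but nontrivial argument: if some non-basic $[b']$ fails the condition, one produces, via a suitable $\kappa_{M_\alpha}$ and a basic $[b_0]\in B(M_\alpha)$ with $\kappa_{M_\alpha}(b_0)=\kappa_{M_\alpha}(b')$, a non-basic element of $B(G,\mu)$ that is not Hodge-Newton decomposable at all, contradicting $(3)$. You need to supply this step.

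A smaller point: for $(2)\Rightarrow(3)$ you invoke \cite{CFS} as a black box. That reference treats the minuscule case (flag varieties), so strictly speaking one has to transport the argument to the affine Schubert cell $\Gr_{G,\mu}$; the paper acknowledges this and points to \cite[6.7]{Shen} for the non-minuscule adaptation. Your handling of the basic class via $\HN(\E_1,x)\leq[b_0]$ and minimality in $B(G,\mu)$ is correct and makes explicit a point the paper leaves implicit.
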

There are several characterizations of being fully Hodge-Newton decomposable. For example, $(G,\mu)$ is fully Hodge-Newton decomposable if and only if every non-basic $[b']\in B(G,\mu)$ is Hodge-Newton decomposable.
\begin{proof}
Clearly (1) implies (2). The proof that (2) implies (3) is almost literally the same as in the minuscule case, compare \cite[6]{CFS}, the only difference being that one has to replace the flag varieties by affine Schubert cells in the $\B+$-Grassmannian. For more details, compare \cite[6.7]{Shen}.

It remains to show that (3) implies (1).

{\it Claim.} If $(G,\mu)$ is fully Hodge-Newton decomposable, then every non-basic $[b']\in B(G,\mu)$ is Hodge-Newton decomposable with respect to $M'$ where $M'$ is equal to the centralizer of $\nu_{b'}$ in the quasi-split inner form $H$ of $G$.

Since the claim is an assertion only involving the Newton points of elements of $B(G,\mu)$, we may replace $G$ by the quasi-split inner form of its adjoint group and thus assume that $G$ is quasi-split. Assume that there is some $[b']\in B(G,\mu)$ that does not satisfy the assertion of the claim. Let $M'$ be the centralizer of the dominant Newton point of $[b']$ and choose a corresponding representative $b'\in M'(\breve F)\cap [b']$. Then $\mu_{\dom}-\nu_{b'}\notin\Phi_{M'}$. Let $\alpha$ be a simple root of $G$ such that $\mu_{\dom}-\nu_{b'}\notin\Phi_{M_{\alpha}}$ where $M_{\alpha}$ is the standard Levi subgroup of the maximal standard parabolic corresponding to $\alpha$. Then $\kappa_{M_{\alpha}}(b')\neq\mu^{\sharp_{M_{\alpha}}}$. Let $b_0$ be basic in $M_{\alpha}$ and with $\kappa_{M_{\alpha}}(b_0)=\kappa_{M_{\alpha}}(b')$. Then $[b_0]\leq[b']$, hence $b_0\in B(G,\mu)$. Furthermore, $M_{\alpha}$ is the centralizer of $\nu_{b_0}$. Because $\kappa_{M_{\alpha}}(b_0)\neq\mu^{\sharp_{M_{\alpha}}}$ and $M_{\alpha}$ is maximal, it is not Hodge-Newton decomposable. Thus $(G,\mu)$ is not fully Hodge-Newton decomposable, which finishes the proof of the claim.

From Corollary \ref{corfarconj12} we obtain that $\Gr_{G,\mu,1}^{[b']^*}\subseteq \Gr_{G,\mu}^{\HN=[b']}$ for all $[b']\in B(G,\mu)$. Thus we also have equality in these containments.
\end{proof}
 
Our last result in this context classifies all $[b']$ for which the corresponding Newton stratum and Harder-Narasimhan stratum coincide. 
\begin{remark}\label{rem123}
Let $x\in \Gr_{G,\mu}(C)$. Recall from Corollary \ref{corhone1} that $(\Newt(\E_{1,x}))^*$ is Hodge-Newton decomposable for some $M'$ (with respect to $(G,\mu)$) if and only if the same holds for $\HN(\E_1,x)$. Let $M'$ be the smallest Levi subgroup of the quasi-split inner form of $G$ for which this is the case, and let $M$ and $x_M$ be as in Theorem \ref{thmhone}. Then by the theorem, $(\Newt(\E_{1,x}))^*=\HN(\E_1,x)$ if and only if $\Newt(\E_{1,x_M}^M)^*=\HN(\E_1^M,x_M)$. Thus, it is enough consider equality of Newton strata and Harder-Narasimhan strata for Hodge-Newton indecomposable $[b']$.
\end{remark}
\begin{cor}
Let $[b']\in B(G,\mu)$. Assume that $([b'],\mu)$ is Hodge-Newton indecomposable and that $\mu$ is minuscule. Then $\Gr_{G,\mu}^{[b']^{*}}=\Gr_{G,\mu}^{\HN=[b']}$ if and only if $(G,\mu)$ is fully Hodge-Newton decomposable and $[b']\in B(G,\mu)$ is the basic class.
\end{cor}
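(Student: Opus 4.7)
The plan is to handle the two implications separately, using only material already established earlier in the paper together with \cite[Thm.~1.3]{Viehmann20}.

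For the ``if'' direction, the hypothesis that $(G,\mu)$ is fully Hodge-Newton decomposable lets me invoke Proposition \ref{propfullyhn} (implication $(3)\Rightarrow(1)$): the Newton and Harder-Narasimhan stratifications of $\Gr_{G,\mu,1}$ coincide stratum by stratum. Applied to the basic class $[b']$ this yields the claimed equality.

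For the ``only if'' direction I plan a two-step argument: first show that $[b']$ must be basic, then reduce to the known basic case. To prove the first step, I argue by contradiction and suppose that $[b']$ is non-basic. Since $\mu$ is minuscule and $([b'],\mu)$ is Hodge-Newton indecomposable, \cite[Thm.~1.3]{Viehmann20} guarantees that the weakly admissible locus $\Gr_{G,\mu}^{\wa}$ meets the Newton stratum $\Gr_{G,\mu}^{[b']^{*}}$ non-trivially. But the weakly admissible locus agrees with the semi-stable, hence basic, Harder-Narasimhan stratum $\Gr_{G,\mu}^{\HN=[b_0]}$, where $[b_0]$ is the basic class of $B(G,\mu)$ (as noted in the paragraph preceding Proposition \ref{propfullyhn}). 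Any point $x$ in this intersection therefore lies in $\Gr_{G,\mu}^{[b']^{*}}$ while $\HN(\E_{1},x)=[b_0]\neq[b']$, so $x\notin\Gr_{G,\mu}^{\HN=[b']}$. This contradicts the assumed equality, forcing $[b']=[b_0]$.

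Once $[b']$ is known to be basic, the equality $\Gr_{G,\mu}^{[b']^{*}}=\Gr_{G,\mu}^{\HN=[b']}$ says exactly that the admissible locus equals the weakly admissible locus in $\Gr_{G,\mu,1}$, and by the equivalence $(2)\Leftrightarrow(3)$ of Proposition \ref{propfullyhn} this is precisely the condition that $(G,\mu)$ is fully Hodge-Newton decomposable. The only genuinely non-formal input is the appeal to Viehmann's non-emptiness theorem in Step 1; given that, the rest is a direct repackaging of results already in the paper, and I do not expect any further obstacle.
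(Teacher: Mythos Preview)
Your proposal is correct and uses the same ingredients as the paper's proof: Proposition~\ref{propfullyhn} for the ``if'' direction and for the basic case, and \cite[Thm.~1.3]{Viehmann20} to produce a point in the Newton stratum with basic Harder-Narasimhan vector. The only organizational difference is that the paper argues the contrapositive in one stroke (assuming $(G,\mu)$ is not fully Hodge-Newton decomposable and using that, under the standing Hodge-Newton indecomposability assumption on $[b']$, the case ``fully Hodge-Newton decomposable but $[b']$ non-basic'' is vacuous), whereas you first isolate that $[b']$ must be basic and then invoke $(2)\Leftrightarrow(3)$; both routes amount to the same argument.
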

\begin{proof}
If $(G,\mu)$ is fully Hodge-Newton decomposable and $[b']\in B(G,\mu)$ is the basic class, then $\Gr_{G,\mu}^{[b']^*}=\Gr_{G,\mu}^{\HN=[b']}$ by Proposition \ref{propfullyhn}. Assume that $(G,\mu)$ is not fully Hodge-Newton decomposable. Then by \cite[Thm.~1.3]{Viehmann20}, every Hodge-Newton indecomposable Newton stratum intersects the basic Harder-Narasimhan stratum, which proves the other implication. 
\end{proof}

%\section{Geometric properties}\label{secgeom}

\section{Dimensions of strata}\label{secdim}

For this section we assume that $\mu$ is minuscule. For this case, we have the comparison to the Harder-Narasimhan strata of \cite{DOR}. Since little is known about the geometric properties of the \hn strata of loc.~cit., our results also give new insight for this more classical theory.

\begin{prop}\label{propdim}
Assume that $\mu$ is minuscule, and let $[b']\in B(G,\mu)_{\HN}$. Then 
\begin{eqnarray*}
\dim \Fl (G,\mu)^{\HN=[b']}&\leq&\max_{\lambda\in \Theta(\mu,[b'])}\langle 2\rho,\mu+\lambda\rangle.
\end{eqnarray*} 
Here, we identify $\lambda$ with its $M_{b'}$-dominant representative in $X_*(T)$ and $\mu$ is the representative in $X_*(T)_{\dom}$. Further, $\rho$ is the half-sum of the positive roots of $T$ in $H$.
\end{prop}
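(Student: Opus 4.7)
The plan is to combine the explicit description of the HN stratum from Remark~\ref{remdesc+} with the Bialynicki-Birula isomorphism in the minuscule case and a Mirkovi\'c-Vilonen type dimension estimate for the intersection of semi-infinite orbits with affine Schubert cells.

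By Remark~\ref{remdesc+},
\[
\Gr_{G,\mu}^{\HN=[b']}=\bigcup_{g\in G(F)/P(F)}\;\bigcup_{\{\lambda\}_{gPg^{-1}}:\,[v(\lambda)]=[b']}S_{\{\lambda\}_{gPg^{-1}},gPg^{-1}}\cap\Gr_{G,\mu}.
\]
Since $G(F)$ is locally profinite (hence of diamond dimension zero) and $G(F)/P(F)$ is compact in the $F$-topology (as in the proof of Theorem~\ref{thmdescr}), the outer $G(F)$-translation does not change the dimension. Therefore it suffices to show
\[
\dim\bigl(S_{\{\lambda\}_P,P}\cap\Gr_{G,\mu}\bigr)\leq\langle 2\rho,\mu+\lambda\rangle
\]
for each representative $(P,\{\lambda\}_P)\in\Theta(\mu,[b'])$, with $\lambda$ the chosen $M$-dominant representative. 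Since $\mu$ is minuscule, I can work inside $\Fl(G,\mu)$ via $\BB_\mu$.

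Choose a Borel $B\subseteq P$ containing $T$. The Iwasawa decomposition refines
\[
S_{\{\lambda\}_P,P}\cap\Gr_{G,\mu}=\bigsqcup_{\lambda'\in W_M\cdot\lambda}\bigl(S_{\{\lambda'\}_B,B}\cap\Gr_{G,\mu}\bigr),
\]
and arguing as in the proof of Theorem~\ref{thmcompDOR}, each piece is sent by $\BB_\mu$ to the Schubert cell $B\cdot\dot w_{\lambda'}\cdot P_\mu/P_\mu\subseteq\Fl(G,\mu)$, where $w_{\lambda'}$ is the minimum-length representative in $W/W_{P_\mu}$ with $w_{\lambda'}\cdot\mu=-\lambda'$. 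For minuscule $\mu$ a direct root-theoretic computation (using $\langle\alpha,\mu\rangle\in\{-1,0,1\}$ for all roots $\alpha$) yields
\[
\dim\bigl(S_{\{\lambda'\}_B,B}\cap\Gr_{G,\mu}\bigr)=\ell(w_{\lambda'})=\langle\rho,\mu+\lambda'\rangle.
\]

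It remains to maximize over $\lambda'\in W_M\cdot\lambda$ and to compare with the proposed bound. The $M$-dominance of $\lambda$ makes $\lambda-\lambda'$ a non-negative $\Q$-linear combination of positive coroots of $M$; these are positive coroots of $G$ and therefore pair non-negatively with $\rho$, so $\langle\rho,\mu+\lambda'\rangle\leq\langle\rho,\mu+\lambda\rangle$. Finally, since $-\lambda\in W\cdot\mu$ with $\mu$ dominant, $\mu+\lambda$ is a non-negative $\Q$-linear combination of positive coroots of $G$, hence $\langle\rho,\mu+\lambda\rangle\geq 0$ and therefore $\langle\rho,\mu+\lambda\rangle\leq 2\langle\rho,\mu+\lambda\rangle=\langle 2\rho,\mu+\lambda\rangle$. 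Chaining the estimates gives the claim. The main obstacle is the middle step, i.e.~transferring the semi-infinite orbit structure across the Bialynicki-Birula map and justifying the Mirkovi\'c-Vilonen-type dimension formula in the adic/diamond setting; this is essentially classical for minuscule $\mu$, but needs to be spelled out carefully using the explicit formula for $\BB_\mu$ on classical points from Section~\ref{seccompDOR}.
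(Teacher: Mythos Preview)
Your argument is correct and follows the same strategy as the paper's (very terse) proof: use Remark~\ref{remdesc+} to cover the HN stratum by pieces $S_{\{\lambda\}_{gPg^{-1}},gPg^{-1}}\cap\Gr_{G,\mu}$, observe that the $G(F)$-translation does not affect dimensions, and then bound each piece via Bruhat-cell dimensions in $\Fl(G,\mu)$ after applying $\BB_\mu$. The paper simply appeals to ``well-known formulas for the dimension of Bruhat cells in flag varieties'' where you spell out the refinement to Borel semi-infinite orbits and the identity $\ell(w_{\lambda'})=\langle\rho,\mu+\lambda'\rangle$.

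One point worth noting: your computation actually gives the sharper bound $\max_{\lambda}\langle\rho,\mu+\lambda\rangle$, and your last step---doubling via $\langle\rho,\mu+\lambda\rangle\leq\langle 2\rho,\mu+\lambda\rangle$ since the pairing is non-negative---is correct but wasteful. The paper itself uses the bound in the form $\langle\rho,\mu+\lambda\rangle$ in the proof of Proposition~\ref{propdim2}, so the $2\rho$ in the statement of Proposition~\ref{propdim} appears to be a slip; you have in fact proved what is needed downstream.
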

\begin{proof}
From Remark \ref{remdesc+} we obtain that $\dim \Fl (G,\mu)^{\HN=[b']}=\dim \Gr_{G,\mu}^{\HN=[b']}\leq \max_{g,\lambda}\dim S_{\{\lambda\}_{gPg^{-1}},gPg^{-1}}\cap \Gr_{G,\mu}$. Since $\mu$ is minuscule, $\lambda\in \{-\mu\}$. To compute the dimension of the right hand side we may base change to $\overline F$. The above claim is then obtained from the well-known formulas for the dimension of Bruhat cells in flag varieties.
\end{proof}

\begin{remark}
\begin{enumerate}
\item So far, there is no formula for the dimension of intersections $S_{\lambda}\cap \Gr_{G,\mu}$ in the $\B+$-Grassmannian. For the usual affine Grassmannian over $\mathbb C$, the analogous intersections (for $P$ equal to a chosen Borel subgroup of $G$ and $\mu$ the dominant representative of $\{\mu\}$) are of dimension $\langle 2\rho, \mu+\lambda\rangle$. In view of this formula, we expect that if one can establish a similar dimension theory for the $\B+$-Grassmannian, then also $\dim \Gr_{G,\mu}^{\HN=[b']}\leq\max_{\lambda\in \Theta(\mu,[b'])}\langle 2\rho,\mu+\lambda\rangle$.
\item It would be interesting to know it equality holds in the above dimension estimate.
\end{enumerate}
\end{remark}

In \cite[Conj.~2(2)]{Far19}, Fargues conjectured that the dimension of a Harder-Narasimhan stratum should agree with the dimension of the corresponding Newton stratum, which is by \cite[IV.1]{FarguesScholze} given by $\langle 2\rho,\mu-\nu_{b'}\rangle$. Our next result is a classification of all $[b']\in B(G,\mu)_{\HN}$ for which this is the case. In particular, we show that Fargues' conjectural dimension formula only holds in exceptional cases.

\begin{prop}\label{propdim2}
Let $\mu$ be minuscule and $[b']\in B(G,\mu)_{\HN}$. Then $$\dim \Fl(G,\mu)^{[b']^*}\geq\dim \Fl(G,\mu)^{\HN=[b']}$$ with equality if and only if $[b']$ is basic in the smallest Levi subgroup to which $(G,\mu,[b'])$ is Hodge-Newton decomposable.
\end{prop}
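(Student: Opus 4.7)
The plan is to establish the inequality using \eqref{eqhnne} and the Fargues--Scholze dimension formula for Newton strata, and then to characterize the equality case by combining the Hodge--Newton decomposition of Theorem \ref{thmhone} with Corollary \ref{corfarconj12} and the non-emptiness result \cite[Thm.~1.3]{Viehmann20}.

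For the inequality I would use \eqref{eqhnne} together with the Bialynicki--Birula identification (Theorem \ref{thmcompDOR}) to place $\Fl(G,\mu)^{\HN=[b']}$ inside $\bigcup_{[b'']\geq [b'],\,[b'']\in B(G,\mu)} \Fl(G,\mu)^{[b'']^*}$. By \cite[IV.1]{FarguesScholze}, $\dim \Fl(G,\mu)^{[b'']^*} = \langle 2\rho, \mu-\nu_{b''}\rangle$. Since $[b'']\geq [b']$ means $\nu_{b''}-\nu_{b'}$ is a non-negative rational combination of positive coroots, each pairing non-negatively with $2\rho$, all these dimensions are bounded by $\langle 2\rho, \mu-\nu_{b'}\rangle$, with strict inequality when $[b'']>[b']$. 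This yields $\dim \Fl(G,\mu)^{\HN=[b']} \leq \dim \Fl(G,\mu)^{[b']^*}$, and moreover any top-dimensional contribution to the HN stratum must lie inside $\Fl(G,\mu)^{[b']^*}$ itself.

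To handle equality, let $M'$ be the smallest Levi to which $(G,\mu,[b'])$ is Hodge--Newton decomposable (with $M'=G$ if no proper such Levi exists). Theorem \ref{thmhone}(2), combined with Lemma \ref{lem79}, shows that $\pr_{M'}$ induces a dimension-preserving bijection between $\Fl(G,\mu)^{[b']^*}$ and $\Fl(M',\mu_{M'})^{[b'_{M'}]^*_{M'}}$ which respects the HN stratifications (Theorem \ref{thmhone}(2)(d)); by minimality of $M'$, the triple $(M',\mu_{M'},[b'_{M'}])$ is Hodge--Newton indecomposable. It thus suffices to characterize equality for Hodge--Newton indecomposable triples. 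If $[b'_{M'}]$ is basic in $M'$, then $\nu_{b'_{M'}}$ is central in $M'$, so $M'$ is the centralizer of $\nu_{b'}$ and Corollary \ref{corfarconj12} applies, giving $\Fl(M',\mu_{M'})^{[b'_{M'}]^*_{M'}}\subseteq \Fl(M',\mu_{M'})^{\HN=[b'_{M'}]}$ and thus the reverse dimension inequality. If $[b'_{M'}]$ is non-basic, then \cite[Thm.~1.3]{Viehmann20} implies that the Hodge--Newton indecomposable Newton stratum meets the open weakly admissible locus of $\Gr_{M',\mu_{M'}}$ (the basic HN stratum); by irreducibility of the Newton stratum, this intersection is dense, so the $[b'_{M'}]$-HN piece has strictly smaller dimension, and combined with the strict bounds from the first paragraph for pieces lying in higher Newton strata, this gives $\dim \Fl(M',\mu_{M'})^{\HN=[b'_{M'}]} < \dim \Fl(M',\mu_{M'})^{[b'_{M'}]^*_{M'}}$. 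The main obstacle is to justify rigorously the dimension-preservation of $\pr_{M'}$ at the diamond level and the irreducibility (or equidimensionality) of the relevant Newton strata needed in the density step.
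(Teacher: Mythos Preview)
Your proof of the inequality via \eqref{eqhnne} and the Fargues--Scholze dimension formula is correct and arguably cleaner than what the paper does; the paper does not isolate the inequality as a separate step but folds it into the case analysis. Your reduction to the Hodge--Newton indecomposable situation and your treatment of the basic case are also essentially the same as the paper's.

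The genuine gap is in the non-basic Hodge--Newton indecomposable case. Your argument runs: the Newton stratum $\Fl(M',\mu_{M'})^{[b'_{M'}]^*}$ meets the open weakly admissible locus by \cite[Thm.~1.3]{Viehmann20}, hence its intersection with the complement is a proper closed subset, hence of strictly smaller dimension. But this last step requires irreducibility of the Newton stratum, not just equidimensionality: a proper closed subset of an equidimensional space can have full dimension if it contains an entire component. Irreducibility (or even connectedness) of non-basic Newton strata in $p$-adic flag varieties is not established in the generality you need, and \cite[Thm.~1.3]{Viehmann20} only produces \emph{some} point in the intersection, not a point in every top-dimensional component. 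So as you suspected, this is not just a technical obstacle but a real missing ingredient.

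The paper avoids this entirely by using the explicit upper bound of Proposition~\ref{propdim}, namely $\dim \Fl(G,\mu)^{\HN=[b']}\leq\langle 2\rho,\mu+\lambda\rangle$ for a suitable $\lambda\in\{-\mu\}$ with $-\lambda^{\sharp_M}=\kappa_M(b')$, and then computing directly that
\[
\langle 2\rho,\mu-\nu_{b'}\rangle-\langle 2\rho,\mu+\lambda\rangle=\langle 2\rho,\mu-w_{0,M}(-\lambda)\rangle\geq 0,
\]
with equality forcing $w_{0,M}(-\lambda)=\mu$ and hence $\kappa_M(b')=\mu^{\sharp_M}$, contradicting Hodge--Newton indecomposability. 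This is a purely root-theoretic argument that sidesteps any question about the internal geometry of Newton strata. To fix your approach you would either need to prove the relevant irreducibility statement or replace the density step by this kind of direct estimate.
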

\begin{proof}
Let $M$ be the smallest standard Levi subgroup of $G$ such that $(G,\mu,[b'])$ is Hodge-Newton decomposable with respect to $M$. By Remark \ref{rem123}, Theorem \ref{thmhone} and the Bialynicki-Birula isomorphism, the projection $\Fl(G,\mu)\rightarrow \Fl(M,\mu)$ induces bijections $\Fl(G,\mu)^{[b']^*}(C)\rightarrow \Fl(M,\mu)^{[b'_M]_M^{*}}(C)$ and  $\Fl(G,\mu)^{\HN=[b']}(C)\rightarrow \Fl(M,\mu)^{\HN=[b'_M]_M}(C)$. In particular, the dimensions of corresponding strata coincide. Thus we may assume that $M=G$ and that $(G,\mu,[b'])$ is Hodge-Newton indecomposable.

If $[b']$ is basic, then $\Fl(G,\mu)^{[b']^*}\subseteq \Fl(G,\mu)^{\HN=[b']}\subseteq \Fl(G,\mu)$. Since the dimension of the basic Newton stratum agrees with that of $\Fl(G,\mu)$, the same holds for the basic Harder-Narasimhan stratum. 

Now assume that $[b']$ is non-basic. Let $M$ be the centralizer of its Newton point. From \cite[III.5]{FarguesScholze} we obtain that $\dim \Fl(G,\mu)^{[b']^*}=\langle 2\rho, \mu-\nu_{[b']^*,\dom}\rangle=\langle 2\rho, \mu-\nu_{b',\dom}\rangle$ where we write $\mu$ for the representative of $\{\mu\}$ in $X_*(T)_{\dom}$. By the above proposition we have $\dim \Fl(G,\mu)^{\HN=[b']}\leq\langle \rho, \mu+\lambda\rangle$ for some $\lambda\in \{-\mu\}$ with $-\lambda\leq_{M} \nu_{b'}$ and $\kappa_M(b')=-\lambda^{\sharp_M}$. Using that $\nu_{b'}$ is central in $M$ we obtain
\begin{eqnarray*}
\dim \Fl(G,\mu)^{[b']^*}-\dim \Fl(G,\mu)^{\HN=[b']}&\geq&\langle 2\rho, \mu-\nu_{b'}\rangle-\langle \rho, \mu+\lambda\rangle\\
&=&\langle \rho, \mu-\nu_{b'}\rangle-\langle \rho, \lambda+\nu_{b'}\rangle\\
&=&\langle \rho, \mu-\nu_{b'}\rangle-\langle \rho_M, \lambda+\nu_{b'}\rangle\\
&=&\langle \rho, \mu-\nu_{b'}\rangle-\langle \rho_M, -w_{0,M}(\lambda)-\nu_{b'}\rangle\\
&=&\langle \rho, \mu-\nu_{b'}\rangle-\langle \rho, -w_{0,M}(\lambda)-\nu_{b'}\rangle\\
&=&\langle \rho, \mu-w_{0,M}(-\lambda)\rangle.
\end{eqnarray*}
Since $-\lambda\in W.\mu$, this pairing is non-negative with equality if and only if $w_{0,M}(-\lambda)=\mu$. However, this implies that $\kappa_M(b')=-\lambda^{\sharp_M}=\mu^{\sharp_M},$ in contradiction to the Hodge-Newton indecomposability of $(G,\mu,[b'])$.
\end{proof}

\section{Compatibilities under inner twists}\label{seccompat}

In this section we explain how to generalize our results to the case of modifications of a $G$-bundle $\E_b$ for some basic $b\in G(\breve F)$ instead of the trivial bundle $\E_1$, and discuss the relation to inner twists of $G$.

Recall from Lemma \ref{lemgad} that there is a direct comparison between the Harder-Narasimhan stratification of the $\B+$-Grassmannian for $G$ and that for $G_{\ad}$. Replacing $G$ by its adjoint group we obtain that each inner form of $G$ is an inner twist by some basic element $b\in G(\breve F)$. 

Let $b\in G(\breve F)$ be basic. By $G_b$ we denote corresponding the inner form of $G$. We consider the isomorphism $\Bun_G\cong \Bun_{G_b}$ that maps any $G$-bundle $\E$ to the $G_b$-bundle of isomorphisms $\mathcal Isom_{G}(\E_b,\E)$. Here, $\E_b$ is the $G$-bundle corresponding to $b$. In particular, this identifies $\E_b=\E_b^{G}$ with the trivial $G_b$-bundle $\E_1^{G_b}$. For an explicit description in terms of $B(G)$, see also \cite[3.2.2]{Viehmann20}.

We consider the natural isomorphism $\varphi:\Gr_{G,\breve F}\rightarrow \Gr_{G_b,\breve F}$ induced by the identity map $G_{\breve F}\rightarrow G_{b,\breve F}$. It is compatible with Beauville-Laszlo uniformization in the sense that for $x\in \Gr_{G}(C)$ we have that $(\E_b^{G})_x$ is the $G$-bundle corresponding to $(\E_1^{G_b})_{\varphi(x)}$. In particular, $\varphi$ identifies corresponding Newton strata $\Gr_{G,\mu,b}^{[b']}$ and $\Gr_{G_b,\mu,1}^{[b'b^{-1}]}$. Here, a Newton stratum $\Gr_{G_b,\mu,1}^{[b'b^{-1}]}$ or $\Gr_{G,\mu,b}^{[b']}$ is non-empty if and only if $[b'b^{-1}]\in B(G_b,-\mu)$. This motivates the notation 
\begin{eqnarray*}
B(G,\mu,b)&=&\{[b']\in B(G)\mid [b'b^{-1}]\in B(G_b,-\mu)\}\\
&=&\{[b']\in B(G)\mid \kappa_G(b')
=\kappa_G(b)-\mu^{\sharp_G}, \nu_{b'}\leq \nu_b(\mu^{-1,\diamond})_{\dom}\}
\end{eqnarray*}
where $\mu^{-1,\diamond}$ is the Galois average of $\mu^{-1}$. A similar

Furthermore, the isomorphism $G_{\breve F}\rightarrow G_{b,\breve F}$ identifies parabolic subgroups of $G_b$ (defined over $F$) with parabolic subgroups of $G_{\breve F}$ that are stable under $b\sigma$. We want to use these identifications to extend our definition of the Harder-Narasimhan formalism also to modifications of basic (but not necessarily trivial) $G$-bundles for general $G$. 

Before that, we need to check compatibility with the Harder-Narasimhan formalism for any basic $b$ and $G=\GL_n$ that we established in Section \ref{sec22}.

\begin{lemma}
Let $G=\GL_n$ and let $b\in G(\breve F)$ be basic. Then the equivalence between vector bundles of rank $n$ and $G_b$-bundles, together with its analog for vector bundles of rank $n$ with a filtration in $\mathcal C$ resp.~$P_b$-bundles (for parabolic subgroups $P_b$ of $G_b$) identifies the canonical filtration of $(\E_b^{G},x)$ in the sense of Section \ref{sec22} with the canonical reduction of $(\E_1^{G_b},\varphi(x))$. 

In particular, the bijection $B(G_b)\rightarrow B(G)$ identifies the Harder-Narasimhan vector of $(\E_b^{G},x)$ with the Harder-Narasimhan vector of $(\E_1^{G_b},\varphi(x))$.
\end{lemma}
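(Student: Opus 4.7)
The plan is to match the data defining both filtrations step by step, and then invoke the maximality characterization of the canonical reduction from Proposition \ref{propHNparred}.

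First, I would recall that under the identification $\varphi:\Gr_{G,\breve F}\xrightarrow{\sim}\Gr_{G_b,\breve F}$ and the equivalence $\Bun_G\cong\Bun_{G_b}$, $\mathcal E\mapsto\mathcal{I}som_G(\mathcal E_b,\mathcal E)$, the pair $(\mathcal E_b^G,x)$ corresponds to $(\mathcal E_1^{G_b},\varphi(x))$, and the modified bundle $(\mathcal E_b^G)_x$ corresponds to $(\mathcal E_1^{G_b})_{\varphi(x)}$, since Beauville-Laszlo gluing commutes with this twist. Moreover, under $\varphi$ the element $x$ and $\varphi(x)$ share the same type, so $\deg(x)=\deg(\varphi(x))$ via the $\val_\xi\det$-formula, and ranks agree tautologically. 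Hence the rank and degree functions on $\mathcal C$ for $(\mathcal E_b^G,x)$ match the rank and degree of the corresponding vector bundle associated with $(\mathcal E_1^{G_b},\varphi(x))$ in the sense of Lemma \ref{lemcompv}.

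Next, I would match subobjects. A sub-object of $(\mathcal E_b^G,x)$ in the category $\mathcal C$ is determined by a sub-isocrystal $(V,b\sigma|_V)$ of $(\breve F^n,b\sigma)$. Such sub-isocrystals are in bijection with $b\sigma$-stable $\breve F$-subspaces of $\breve F^n$, equivalently with $b\sigma$-stable parabolic (resp.~Levi) subgroups of $\GL_{n,\breve F}$ (through their stabilizers and the associated flags). Under the identification $G_{\breve F}\cong G_{b,\breve F}$ with twisted Frobenius $b\sigma\leftrightarrow\sigma$, these correspond precisely to parabolic $F$-subgroups of $G_b$ together with a reduction, i.e.~to isoc-filtrations of $\mathcal E_1^{G_b}$ in the sense of Section \ref{secparred}. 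Using the explicit description of $(\mathcal E_b^G)_x$ in terms of lattices, the strict subobject of $(\mathcal E_b^G,x)$ corresponding to $(V,b\sigma|_V)$ has underlying vector bundle equal to the sub-bundle of $(\mathcal E_1^{G_b})_{\varphi(x)}$ obtained from the $P$-reduction via $\varphi$, and the slope vector computed in either setting coincides by Lemma \ref{lemcompv}.

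With these identifications, the Harder-Narasimhan filtration of $(\mathcal E_b^G,x)$ in $\mathcal C$ is a reduction of $(\mathcal E_1^{G_b},\varphi(x))$ to a parabolic $F$-subgroup $P\subseteq G_b$ whose slope vector is maximal (with respect to the partial order of Section \ref{secparred}) among all $\mathcal C$-subobjects, equivalently among all parabolic reductions compatible with isoc-filtrations. By Proposition \ref{propHNparred} this is exactly the canonical reduction of $(\mathcal E_1^{G_b},\varphi(x))$. The final assertion about Harder-Narasimhan vectors follows since, under the bijection $B(G_b)\to B(G)$, $[b']\mapsto[b'b]$, the class associated to a basic reduction $b'_M$ in a Levi $M\subseteq G_b$ matches up with the class of the corresponding basic reduction in the $b\sigma$-stable Levi of $G_{\breve F}$, and both determine the same element of $X_*(Z_M)_{\mathbb Q}^\Gamma$.

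The main obstacle is the bookkeeping in the second paragraph: one must check that the bijection between $b\sigma$-stable parabolics of $G_{\breve F}$ and $F$-parabolics of $G_b$ is genuinely compatible with the Beauville-Laszlo modification, so that the sub-vector bundle produced by a $\mathcal C$-subobject agrees on the nose with the vector bundle attached to the corresponding $P$-reduction of $(\mathcal E_1^{G_b})_{\varphi(x)}$. Once this compatibility is in hand, the remainder is a direct application of Proposition \ref{propHNparred} and the compatibility of slope vectors with the identification $B(G_b)\cong B(G)$.
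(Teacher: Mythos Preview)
Your approach is correct and essentially the same as the paper's: both arguments reduce the statement to showing that, under the bijection between $b\sigma$-stable parabolic subgroups of $G_{\breve F}$ and $F$-parabolic subgroups of $G_b$, the slope vector attached to a $\mathcal C$-filtration of $(\mathcal E_b^G,x)$ agrees with the slope vector of the corresponding $P_b$-reduction of $(\mathcal E_1^{G_b})_{\varphi(x)}$, and then conclude via the maximality characterizations (Proposition~\ref{itm : comparison GLn} on the $\mathcal C$ side, Proposition~\ref{propHNparred} on the $G_b$ side).

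Where the paper is more explicit than your sketch is precisely at the point you flag as the ``main obstacle.'' Rather than invoking Lemma~\ref{lemcompv} (which as stated only compares two descriptions of the slope vector on the $G_b$ side), the paper verifies the cross-identification directly: it uses that $\varphi$ is compatible with the Iwasawa decomposition, so that writing $x=x_P\cdot g$ with $x_P\in P(B_{\mathrm{dR}})$ and $g\in G(B_{\mathrm{dR}}^+)$ yields $\varphi(x)=\varphi_P(x_P)\cdot g'$ with $g'\in G_b(B_{\mathrm{dR}}^+)$; then \cite[Lemma~3.11]{Viehmann20} computes both slope vectors as central elements determined by $\kappa_M$-values, and the end of the proof of \cite[Prop.~5.2]{CFS} identifies these as Newton points of corresponding classes under $B(G_b)\cong B(G)$. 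Your sketch would be completed by inserting exactly this computation.
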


\begin{proof}
Let $P_b$ be a parabolic subgroup of $G_b$ and $\E_1^{P_b}$ be the corresponding $P_b$-reduction of the trivial $G_b$ bundle $ \E_1^{G_b} $. Let $ \E_{1, \varphi(x)}^{P_b} $ be the corresponding $P_b$-reduction of the modified $G_b$-bundle $\E_{1, \varphi(x)}^{G_b}$ and let $v_{b, P_b}$ be the corresponding slope vector. By Proposition \ref{propHNparred}, the canonical reduction of $(\E_1^{G_b},\varphi(x))$ is the unique reduction to a parabolic subgroup $P_b$ of $G_b$ such that $ v_{b, P_b} $ is maximal among the slope vectors for all possible parabolic subgroups of $G_b$.

On the other hand, we have a bijection between parabolic subgroups $P$ of $G_{\breve{F}}$ stable under $b\sigma$ and filtrations $ \prescript{\bullet}{}{(\E_b^{G},x)}$ of $(\E_b^{G},x)$ in the category $ \mathcal{C} $. Hence for any $P$ as above we have a corresponding slope vector $ v_{1, P} $ by Definition \ref{polygon}. By Proposition \ref{itm : comparison GLn}, the canonical filtration of $(\E_b^{G}, x)$ is the unique filtration such that $ v_{1, P} $ is maximal among all the filtrations of $(\E_b^{G},x)$ in $ \mathcal{C} $.     

There is a bijection between the set of parabolic subgroups of $G_b$ and parabolic subgroups of $G$ being stable under $b\sigma$. Thus if one denotes $P \subset G $ the corresponding parabolic subgroup of $ P_b \subset G_b $ then it is enough to compare $ v_{b, P_b} $ and $ v_{1, P} $. Let $ M $ be a Levi subgroup of $P$ and let $M_b$ be the corresponding Levi subgroup of $P_b$. Then the map $ \varphi $ is compatible with the Iwasawa decomposition. More precisely, if $ x = x_P \cdot g $ where $ x_P \in P(\BdR) $ and $ g \in G(\B+) $ then $ \varphi(x) = \varphi_{P}(x_P) \cdot g' $ where $ \varphi_{P}(x_P) \in P_b $ and $ g' \in G_b(\B+) $.  

By \cite[Lemma 3.11]{Viehmann20}, we see that $ v_{b, P_b} $ is central in $X_*(M_b)_{\Q}$ and thus determined by its image in $\pi_1(M_b)_{\Gamma}$, which equals $ - \kappa_{M_b}(\varphi(x_M)) $, where $x_M$ is the image of $x_P$ in $M(\BdR)$. Similarly, $ v_{1, P} $ is central in $X_*(M)_{\Q}$ and equal to the difference of the $G$-central element with image $ \kappa_G(b)$ in $\pi_1(G)_{\Gamma}$ and the element which is central in $M$ and with image $\kappa_{M}(\varphi_M (x_{M}))$. By the argument at the end of the proof of \cite[Prop. 5.2]{CFS}, these two vectors are the Newton points of corresponding elements of $B(G_b)$ resp. $B(G)$.
\end{proof}

\begin{remark}
Let $G$ be a reductive group over $F$ and let $b\in G(\breve F)$ be basic.
\begin{enumerate}
\item 
We define the Harder-Narasimhan vector of $(\E_b^G,x)$ for $x\in \Gr_{G,\mu}(C)$ to be the Harder-Narasimhan vector associated with $(\E_1^{G_b},\varphi(x))$.
\item Let $x\in \Gr_{G,\mu}(C)$ and consider the canonical reduction of $(\E_1^{G_b},\varphi(x))$. We obtain a parabolic subgroup $P'$ of $G_b$, and a reduction $(\E_{1,x}^{G_b})_{P'}$ of $\E_{1,x}$ induced by the reduction $\E_1^{P'}$. Then $P'$ corresponds to a parabolic subgroup $P$ of $G_{\breve F}$ stable under $b\sigma$. Recall that the adic Fargues-Fontaine curve is defined as $X=Y/\phi^{\Z}$ where for $S=\Spa(C^{\flat},C^{\flat,+})$, we have $$Y=\Spa~W_{\O_F}(C^{\flat +})\setminus \{[\pi]p=0\}.$$ Consider the pullback of $\E^{G_b}_{1,x}$ and $(\E_{1,x}^{G_b})_{P'}$ to $Y$. They correspond to the pullback of $(\E_b^G)_{\varphi(x)}$ to $Y$ and to a reduction to $P$ of this bundle. Using descent via $b\sigma$ one obtains a bundle on $X$. However, one has to extend the definition of parabolic reductions of $G$-bundles on $X$ to also include parabolic subgroups not defined over $F,$ but rather over $\breve F$ and stable under $b\sigma$.

One can now reformulate all of our results in this new context. Since this translation is in each case easy to carry out, but makes the notation more involved, we leave it to the reader.
\item Notice that these remarks also apply to elements $b\in [1]\in B(G)$. For such an element, we have $G\cong G_b$ and $\E_1\cong \E_b$. However, this identification leads to a different trivialization of the trivial bundle that we modify. Based on such a trivialization, all Harder-Narasimhan strata in some $\Gr_{G,\mu}$ get shifted by an element $g\in G(\breve F)$ with $g^{-1}b\sigma(g)=1$. 
\end{enumerate}
\end{remark}

\appendix
\section{A variant of Schieder's comparison theorem}

In the first part of this appendix we study a much more classical situation, namely the canonical (or Harder-Nara\-simhan) reduction of $G$-bundles on a curve. The main result, a strengthening of Schieder's comparison theorem, seems to be unknown also in this case, and the proofs are completely parallel. 

Let $k$ be an algebraically closed field and let $X$ be a smooth complete curve over $k$. Then by classical Harder-Narasimhan theory \cite{HN}, every vector bundle $\E$ on $X$ has a unique filtration $(0)=\E_0\subsetneq \E_1\subsetneq \dotsm\subsetneq \E_r=\E$, called the canonical filtration, such that the subquotients $\E_i/\E_{i-1}$ are semi-stable of some slopes $\lambda_i\in\Q$ with $\lambda_i>\lambda_{i+1}$ for all $i$.  The vector $v_{\E}\in \Q^n_+$ where $n=\rk \E$ and with components $\lambda_1,\dotsc, \lambda_r$ with multiplicities $\rk\E_i-\rk \E_{i-1}$ is called the Harder-Narasimhan polygon $HN(\E)$ of $\E$. We also associate with it the convex polygon which is the graph of the piecewise linear continuous function $[0,n]\rightarrow \R$ mapping $0$ to $0$ and whose slope on $[l-1,l]$ is equal to the $l$th component of $HN(\E)$.

Furthermore, there is the following comparison theorem.  
Let $\E'$ be a sub-vector bundle of $\E$. Then \begin{enumerate}
 \item The point $(\rk \E',\deg \E')$ lies under or on $HN(\E)$. 
\item\label{comp2} If it lies on $HN(\E)$, there is an $i$ with $\E_i\subseteq \E'\subseteq \E_{i+1}$.
 \end{enumerate} 

Let now $G$ be a reductive group over $k$. Generalizing the above theory there is for every $G$-bundle on $X$ a canonical or Harder-Narasimhan reduction to a $P$-bundle for some standard parabolic subgroup $P$ of $G$, compare for example \cite{BH}, \cite{B}. In \cite[Thm.~4.1]{S}, Schieder generalizes the first part of the above comparison theorem to this context. The first goal of the present work, Theorem \ref{thmcompare} below, is to generalize (2) to the group-theoretic context. Schieder's comparison theorem also contains a second assertion concerning refinements of canonical reductions. It is the group-theoretic version of the following statement, which is implied by \eqref{comp2} above: Assume that $(0)=\E'_0\subsetneq \E'_1\subsetneq \dotsm\subsetneq \E'_s=\E$ is a second filtration of $\E$. Associate with it a similar polygon $v$ whose slopes are the slopes of the subquotients of this filtration with corresponding multiplicities. If $v=HN(\E)$, then this new filtration is a refinement of the canonical filtration of $\E$. 

Using the classification of $G$-bundles on the Fargues-Fontaine curve, we derive in Section \ref{seccompffc} a variant of Theorem \ref{thmcompare} for this context.

\subsection{Comparing reductions of $G$-bundles}\label{seccompare}

 \subsubsection{Canonical reductions}
We recall some of the main notions regarding the canonical reduction for $G$-bundles on curves. We follow Schieder \cite[2]{S}, but replace some of his notation. For more details and proofs compare \cite{BH}, \cite{B}, \cite{S}.
 
Let $k$ be an algebraically closed field of any characteristic, and let $G$ be a reductive group over $k$. We fix a maximal torus $T$ and a Borel subgroup $B$ containing it. For a standard parabolic subgroup $P$ of $G$ we denote by $M$ its Levi quotient. Let $\Delta$ be the set of simple roots for our choice of $B$ and $T$, and $\Delta_M$ correspondingly for $M$. We denote by $\pi_1(G)$ the quotient of $X_*(T)$ by the coroot lattice. For $v\in X_*(T)_{\Q}$ let $v^{\sharp_M}$ denote its image in $\pi_1(M)_{\Q}$. We also write $v^{\sharp}$ instead of $v^{\sharp_G}$.

Let $X$ be a smooth and complete curve over $k$. As before, $G$-bundles on $X$ can be viewed as $G$-torsors on $X$ that are locally trivial for the \'etale topology, or equivalently as exact tensor functors from the category ${\rm Rep}_G$ of rational algebraic representations of $G$ to the category $\Bun_X$ of vector bundles on $X$. 

We denote by ${\rm Bun}_G$ the moduli stack of $G$-bundles on $X$, and similarly for other linear algebraic groups. For $P$ and $M$ as above we have natural maps of stacks $$\Bun_G\longleftarrow \Bun_P\longrightarrow \Bun_M$$ induced by the homomorphisms $P\hookrightarrow G$ and $P\twoheadrightarrow M$.

We have $\pi_0(\Bun_P)\cong \pi_1(M)$. For $\lambda\in \pi_1(M)$ we denote by $\Bun_{P,\lambda}$ the corresponding connected component of $\Bun_P$. 

If $H\subseteq G$ is a subgroup and ${\E}\in \Bun_G$, a reduction of ${\E}$ to $H$ is an element ${\E}_H\in\Bun_H$ such that ${\E}_H\times^H G\cong {\E}$. Let ${\E}\in \Bun_G$ and let ${\E}_P\in \Bun_P$ be a reduction to $P$. We associate with ${\E}_P$ the rational cocharacter $v\in X_*(T)_{\mathbb Q,\dom}$ defined via 
\begin{eqnarray*}
v=v({\E}_P): X^*(P)&\rightarrow &\Z\\
\chi&\mapsto& \deg \chi_*({\E}_P)
\end{eqnarray*} 
and such that $v$ is central in $M$. It is called the slope of ${\E}_P$.

\begin{remark}\label{lemdescslope}
A second description of $v$ (used in \cite{S}) is obtained by associating with ${\E}_P$ the image $\lambda_P=\lambda_P({\E}_P)\in \pi_1(M)$ under the composition $\Bun_P\rightarrow \Bun_M\rightarrow \pi_0(\Bun_M)\cong \pi_1(M)$. 

Then $v({\E}_P)\in X_*(T)_{\Q,\dom}$ is the unique element that is central in $M$ and such that $v({\E}_P)^{\sharp_M}=\lambda_P({\E}_P)$ in $\pi_1(M)_{\Q}$. In other words, it is the image of $\lambda_P({\E}_P)$ under 
\begin{equation}\label{eqliftel}
\phi_M:\pi_1(M)_{\Q}\rightarrow X_*(Z(M)^0)_{\Q}\hookrightarrow X_*(T)_{\mathbb Q}.
\end{equation}
Conversely, $\lambda_P({\E}_P)$ is uniquely determined by its image in $\pi_1(G)$, which coincides with $\lambda_G({\E})$, together with the image of $\lambda_P({\E}_P)$ in $\pi_1(M)_{\Q}$, which coincides with $v({\E}_P)^{\sharp_M}$. Indeed, this follows since the kernel of the projection map $\pi_1(M)\rightarrow \pi_1(G)$ is torsion free.
\end{remark}

For $\lambda,\lambda'\in X_*(T)_{\mathbb Q}$ we write $\lambda\preceq \lambda'$ if $\lambda'-\lambda$ is a non-negative linear combination of positive coroots. Notice that we do not assume $\lambda$ or $\lambda'$ to be dominant. 

\begin{definition}
\begin{enumerate}
\item A $G$-bundle ${\E}$ is called semistable if for every standard parabolic subgroup $P$ and every reduction ${\E}_P$ of ${\E}$ to $P$ we have $v({\E}_P)\preceq v({\E})$. We denote the semistable locus in $\Bun_G$ by $\Bun_G^{ss}$, and likewise for $\Bun_P^{ss}$, $\Bun_{G,\lambda}^{ss}$ etc.
\item Let $P$ be a standard parabolic subgroup of $G$ and $M$ its standard Levi factor. Then $v\in X_*(T)_{\mathbb Q,\dom}$ is called dominant $P$-regular if $v$ is central in $M$, and if $\langle v,\alpha\rangle>0$ for all simple roots $\alpha$ that are not in $M$. Similarly, $\lambda\in \pi_1(M)_{\mathbb Q}$ is called dominant $P$-regular if $\phi_M(\lambda)$ is dominant $P$-regular. 
\item A reduction ${\E}_P$ of a $G$-bundle is called canonical, if ${\E}_P$ is semistable and $v({\E}_P)$ is dominant $P$-regular.
\end{enumerate}
\end{definition}
Then Harder-Narasimhan theory for $G$-bundles on $X$ (\cite{BH}, \cite{B}) implies that for every $G$-bundle ${\E}$ on $X$ there is a unique standard parabolic subgroup $P$ of $G$ such that there is a canonical reduction ${\E}_P$ of ${\E}$. This reduction is then also uniquely defined.

\subsubsection{The comparison theorem}
Let $P_1,P_2$ be standard parabolic subgroups of $G$ with standard Levi subgroups $M_1,M_2$. Let ${\E}_{P_1}\in \Bun_{P_1, \lambda_{P_1}}^{ss}$ and ${\E}_{P_2}\in \Bun_{P_2, \lambda_{P_2}}$ be reductions of the same $G$-bundle ${\E}$ on the curve $X$. Assume that $ \lambda_{P_1}=\lambda$ is dominant $P_1$-regular, i.e.~${\E}_{P_1}$ is the canonical reduction of ${\E}$. Recall that by Remark \ref{lemdescslope} we have $v({\E}_{P_i})=\phi_{P_i}(\lambda_i)$. Then the first assertion of \cite[Thm.~4.1]{S} shows $v({\E}_{P_2})\preceq v({\E}_{P_1})$. The aim of this section is to prove the following generalization of the second assertion of loc.~cit.
\begin{thm}\label{thmcompare}
In the above context assume that the images of $v({\E}_{P_1})$ and of $v({\E}_{P_2})$ in $\pi_1(M_2)_{\Q}$ coincide. Let $Q=P_1\cap P_2$. Then there is a joint reduction ${\E}_Q\in \Bun_{Q}$ of ${\E}_{P_1}$ and ${\E}_{P_2}$.
\end{thm}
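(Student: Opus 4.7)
The plan is to adapt Schieder's proof of \cite[Thm.~4.1]{S} to the weakened hypothesis of equality only in $\pi_1(M_2)_{\Q}$.

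First, I interpret the two reductions as a single section $s:X\to(\E/P_1)\times_X(\E/P_2)$ and exploit the diagonal $G$-orbit stratification of $(G/P_1)\times(G/P_2)$. This stratification is indexed by $W_{M_1}\backslash W/W_{M_2}$; for any representative $\dot w$, the corresponding stratum is $G$-equivariantly isomorphic to $G/Q_w$ with $Q_w=P_1\cap\dot wP_2\dot w^{-1}$. Since $X$ is irreducible, the section lies in a unique stratum at the generic point, yielding a $Q_w$-reduction $\E_{Q_w}^U$ of $\E$ over a dense open $U\subseteq X$. Properness of $G/Q_w$ together with the valuative criterion (using that $X$ is a smooth curve) extends $\E_{Q_w}^U$ uniquely to a reduction $\E_{Q_w}$ on all of $X$; separatedness of $\E/P_i$ guarantees that the induced $P_1$-reduction equals $\E_{P_1}$ globally, and that the induced $\dot wP_2\dot w^{-1}$-reduction corresponds to $\E_{P_2}$ under the $\dot w$-identification $G/\dot wP_2\dot w^{-1}\cong G/P_2$.

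The main obstacle is then to show that $w=1$, so that $Q_w=Q=P_1\cap P_2$ and $\E_{Q_w}$ is the desired joint reduction. For this step, two identities on the slope $v(\E_{Q_w})\in X_*(T)_\Q$ are available: $v(\E_{Q_w})^{\sharp_{M_1}}=v(\E_{P_1})^{\sharp_{M_1}}$ from the induced $P_1$-reduction, and $v(\E_{Q_w})^{\sharp_{\dot wM_2\dot w^{-1}}}=\dot w(v(\E_{P_2}))^{\sharp_{\dot wM_2\dot w^{-1}}}$ from the induced $\dot wP_2\dot w^{-1}$-reduction. Because $Q_w\subseteq P_1$, the reduction $\E_{Q_w}$ also gives a parabolic reduction of the semi-stable Levi $M_1$-bundle $\E_{P_1}\times^{P_1}M_1$, so part (1) of \cite[Thm.~4.1]{S} applied inside $M_1$ yields that $v(\E_{P_1})-v(\E_{Q_w})$ is a non-negative $\Q$-linear combination of simple coroots of $M_1$. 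Translating the hypothesis $v(\E_{P_1})^{\sharp_{M_2}}=v(\E_{P_2})^{\sharp_{M_2}}$ through the $\dot w$-twist and combining with this inequality and with the dominant $P_1$-regularity of $v(\E_{P_1})$, one concludes that $\dot w\in W_{M_1}W_{M_2}$, i.e.~$w=1$ in $W_{M_1}\backslash W/W_{M_2}$.

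The heart of the argument is thus the final slope comparison: bookkeeping through the $\dot w$-conjugation, and using the $P_1$-regularity of the canonical slope together with the hypothesis on $\pi_1(M_2)_\Q$ to rule out non-trivial Bruhat double cosets. Everything else (existence of the generic relative position, extension of $\E_{Q_w}$ to all of $X$, compatibility with the given reductions) is standard orbit-stratification and valuative-criterion machinery.
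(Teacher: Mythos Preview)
Your overall strategy (pair of reductions $\Rightarrow$ generic relative position $w\in{}^{M_1}W^{M_2}$ $\Rightarrow$ show $w=1$) is exactly the paper's, but two points need fixing.

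\emph{A concrete error.} The variety $G/Q_w$ is \emph{not} proper when $w\neq 1$, so the valuative criterion does not extend $\E_{Q_w}^U$ to a $Q_w$-reduction on all of $X$. For example, for $G=\GL_2$, $P_1=P_2=B$ and $w$ the nontrivial reflection one has $Q_w=B\cap \dot wB\dot w^{-1}=T$, and $G/T$ is affine. What is true is that the section $s:X\to (\E/P_1)\times_X(\E/P_2)$ is already global, but it may leave the locally closed $w$-stratum at finitely many points. The paper (following Schieder) avoids this trap by first proving $w=1$ from the \emph{generic} relative position and Schieder's global slope inequalities, and only then using that the $w=1$ orbit is \emph{closed} in $G/P_1\times G/P_2$ to conclude that the relative position is $1$ on all of $X$; the joint $Q$-reduction then comes from \cite[Cor.~4.1]{S}. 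If you want to work with global reductions before knowing $w=1$, you must extend separately to the standard parabolics $Q_1\subseteq P_1$ and $Q_2\subseteq P_2$ (whose flag varieties \emph{are} proper), not to $Q_w$.

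\emph{A gap in the key step.} Your final sentence ``one concludes that $\dot w\in W_{M_1}W_{M_2}$'' is where the whole content of the theorem lies, and it is not justified. The paper's route is cleaner and does not use the $M_1$-semistability inequality you propose: from Schieder's proof one has the chain
\[
v(\E_{P_1})\ \succeq\ w^{-1}\bigl(v(\E_{P_1})\bigr)\ \succeq\ v(\E_{P_2}),
\]
in which both differences are non-negative combinations of positive coroots. Projecting to $\pi_1(M_2)_{\Q}$ and using the hypothesis $v(\E_{P_1})^{\sharp_{M_2}}=v(\E_{P_2})^{\sharp_{M_2}}$ forces each difference to be a combination of $M_2$-coroots, so $v(\E_{P_1})$ and $w^{-1}(v(\E_{P_1}))$ have the same image in $\pi_1(M_2)_{\Q}$. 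The paper then isolates the purely root-theoretic statement (its Lemma~\ref{lems455}(2)): if $\nu\in X_*(T)_{\Q}$ is dominant $P_1$-regular, $w\in{}^{M_1}W^{M_2}$, and $\nu^{\sharp_{M_2}}=(w^{-1}\nu)^{\sharp_{M_2}}$, then $w=1$. That lemma needs its own short argument (compare the $M_2$-dominant representative of $w^{-1}\nu$ with $\nu$). Your sketch via ``translating through the $\dot w$-twist'' would have to reproduce this; as written it does not.
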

For the proof we need a lemma. Let $W_1, W_2$ be the Weyl groups of $M_1$ and $M_2$, respectively. By ${}^{M_1}W^{M_2}$ we denote the subset of $W$ of elements $w$ which are shortest representatives of their double coset $W_1 wW_2$.

\begin{lemma}\label{lems455}
Let $P_1,P_2$ be standard parabolic subgroups of $G$ with standard Levi factors $M_1,M_2$ and unipotent radicals $N_1,N_2$.
\begin{enumerate}
\item  Let $\nu\in X_*(T)_{\mathbb Q}$ be dominant. For every $w\in W,$ $$\nu\succeq w^{-1}(\nu)$$ in $X_*(T)_{\mathbb Q}$.
\item Let $\nu\in X_*(T)_{\mathbb Q}$ be dominant $P_1$-regular. If $w\in {}^{M_1}W^{M_2}$ is such that the images of $\nu$ and $w^{-1}(\nu)$ in $\pi_1(M_2)_{\Q}$ agree, then $w=1$.
\end{enumerate} 
\end{lemma}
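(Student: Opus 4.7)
The plan is to prove (1) by a standard induction on $\ell(w)$, and then to deduce (2) from (1) by exploiting the defining property of ${}^{M_1}W^{M_2}$ together with the stabilizer description of a $P_1$-regular dominant weight.

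For (1), I would induct on $\ell(w)$. Write $w=s_\alpha w'$ with $\alpha\in\Delta$ and $\ell(w')=\ell(w)-1$; this length condition is equivalent to $(w')^{-1}(\alpha)\in\Phi^+$, so $(w')^{-1}(\alpha^\vee)$ is a positive coroot. A direct calculation from $s_\alpha(\nu)=\nu-\langle\alpha,\nu\rangle\alpha^\vee$ gives
$$\nu-w^{-1}(\nu)=\bigl(\nu-(w')^{-1}(\nu)\bigr)+\langle\alpha,\nu\rangle\,(w')^{-1}(\alpha^\vee).$$
The first term is $\succeq 0$ by induction and the second is a non-negative multiple of a positive coroot since $\nu$ is dominant; hence the sum is $\succeq 0$, closing the induction.

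For (2), the hypothesis $(\nu-w^{-1}(\nu))^{\sharp_{M_2}}=0$ says that $\nu-w^{-1}(\nu)$ lies in the $\mathbb Q$-span of the simple coroots of $M_2$. Combined with (1), which provides a presentation of $\nu-w^{-1}(\nu)$ as a non-negative combination of positive coroots of $G$, one deduces
$$\nu-w^{-1}(\nu)=\sum_{\alpha\in\Phi^+_{M_2}}c_\alpha\,\alpha^\vee,\qquad c_\alpha\geq 0,$$
because every positive coroot of $G$ outside $\Phi^+_{M_2}$ has a strictly positive coefficient on at least one simple coroot outside $\Delta_{M_2}$, so no cancellation can occur. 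Applying $w$ to this identity and using $w(\Phi^+_{M_2})\subset\Phi^+$ (the defining property of the representative $w\in{}^{M_1}W^{M_2}$), one obtains
$$w(\nu)-\nu=\sum_{\alpha\in\Phi^+_{M_2}}c_\alpha\,w(\alpha^\vee)\ \succeq\ 0.$$
On the other hand, (1) applied to $w$ gives $w(\nu)-\nu\preceq 0$. Since $0$ is the only element that is both $\succeq 0$ and $\preceq 0$ in the coroot order, it follows that $w(\nu)=\nu$, i.e.\ $w\in\mathrm{Stab}_W(\nu)$. By $P_1$-regularity, $\mathrm{Stab}_W(\nu)=W_1$; hence $w\in W_1$ sits in the double coset $W_1 W_2$, whose unique shortest representative is the identity, forcing $w=1$.

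The main subtlety is the bookkeeping step in (2) asserting that a non-negative combination of positive coroots of $G$ which lies in the $\mathbb Q$-span of $\Delta_{M_2}^\vee$ is automatically a non-negative combination of positive coroots of $M_2$. This relies on the structural fact that each positive coroot of $G$ expands with non-negative coefficients on the simple coroot basis, with at least one strictly positive coefficient, ruling out any cancellation between contributions from $\Phi^+_{M_2}$ and its complement. Apart from this point, the argument is a purely formal interplay between the coroot partial order and the combinatorics of shortest $(W_1,W_2)$-double coset representatives.
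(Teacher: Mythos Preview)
Your proof is correct. For (1) you give a standard self-contained induction on length, whereas the paper simply cites \cite[Lemma~4.8]{S}; nothing to add there.

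For (2) your route genuinely differs from the paper's. The paper introduces the auxiliary element $\nu'$, the $M_2$-dominant representative in the $W_{M_2}$-orbit of $w^{-1}(\nu)$, and argues geometrically (no $M_2$-root hyperplane separates $\nu$ and $\nu'$) that $\nu-\nu'$ is a non-negative combination of coroots in $N_2$; the $\pi_1(M_2)$ hypothesis then forces $\nu=\nu'$, so $w^{-1}(\nu)\in W_{M_2}\cdot\nu$, and one concludes. You bypass $\nu'$ entirely: from (1) and the $\pi_1(M_2)_\Q$ condition you see directly that $\nu-w^{-1}(\nu)$ is a non-negative combination of simple coroots of $M_2$, then apply $w$ and use the coset-representative property $w(\Phi^+_{M_2})\subset\Phi^+$ to obtain $w(\nu)\succeq\nu$; combined with (1) for $w^{-1}$ this gives $w(\nu)=\nu$, hence $w\in\mathrm{Stab}_W(\nu)=W_1$. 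Your approach is a bit more elementary---the hyperplane-separation step in the paper's argument is the one requiring the most thought, and you avoid it by exploiting the minimal-length property of $w$ more directly. One cosmetic remark: your ``bookkeeping'' paragraph is more elaborate than necessary, since the point reduces immediately to linear independence of simple coroots; and when you write ``(1) applied to $w$'' you mean (1) applied with $w$ replaced by $w^{-1}$.
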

\begin{proof}
(1) is shown in \cite[Lemma 4.8]{S} and follows immediately from the assumption that $\nu$ is dominant. For (2) we replace $\nu$ by a suitable multiple and may thus assume that $\nu\in X_*(T)$, and that the images of $\nu$ and $w^{-1}(\nu)$ in $\pi_1(M_2)$ agree. Let $\nu'$ be the $M_2$-dominant representative in the $W_{M_2}$-orbit of $w^{-1}(\nu)$. Since $\nu,\nu'$ are in the same $W$-orbit and both $M_2$-dominant, there is no root hyperplane for $M_2$ separating the two elements. Therefore, $\nu-\nu'$ is a non-negative linear combination of coroots $\alpha^{\vee}$ for roots $\alpha$ of $T$ in $N_2$. %Wir meinen hier wirklich nur Wurzelhyperebenen, keine affinen, und die Aussage gerade folgt daraus, dass nu' im W-Orbit von nu liegt, und man dann in Spiegelungen an Wurzelhyperebenen zerlegt, und fuer jede den Effekt studiert. Man braucht dann eben keine fuer M zu nehmen.
 Since $\nu=\nu'$ in $\pi_1(M_2)$, this implies that $\nu=\nu'$, hence $w^{-1}(\nu)\in W_{M_2}(\nu)$. Since $w\in{}^{M_1}W^{M_2},$ this implies $w=1$.
\end{proof}

\begin{proof}[Proof of Theorem \ref{thmcompare}]
By \cite[Lemma 4.1]{S}, there is a unique element $w\in {}^{M_1}W^{M_2}$ for which there is an open dense subset $U\subset X$ such that ${\E}_{P_1}|_U$ is in relative position $w$ with respect to ${\E}_{P_2}|_U$.

By the proof of the first part of \cite[Thm.~4.1]{S} we have $$v({\E}_{P_1})\succeq w^{-1}(v({\E}_{P_1}))\succeq v({\E}_{P_2}).$$ By assumption, the images of $v({\E}_{P_1})$ and $v({\E}_{P_2})$ in $\pi_1(M_2)_{\mathbb Q}$ agree. Hence they also coincide with the corresponding image of $w^{-1}(v({\E}_{P_1}))$. By Lemma \ref{lems455}, this implies $w=1$. 

Since $w=1$, the substack 
$$P_1\backslash (P_1\cdot w\cdot P_2)/P_2\hookrightarrow P_1\backslash G/P_2$$ is closed. Thus ${\E}_{P_1}$ and ${\E}_{P_2}$ are in relative position $w=1$ not only generically, but on the entire curve $X$.

Furthermore, $Q=P_1\cap P_2$ is the parabolic subgroup corresponding to the set of simple roots
$$\Delta_Q=\{\alpha_i\in \Delta_{M_1}\mid w^{-1}(\alpha_i)\in \Delta_{M_2}\}=\{\alpha_i\in \Delta_{M_2}\mid w(\alpha_i)\in \Delta_{M_1}\},$$ i.e. equal to both parabolic subgroups $Q_1,Q_2$ of \cite[Cor.~4.1]{S}. Applying the corollary yields the desired reduction of ${\E}$ to $Q$.
\end{proof}

\subsection{The canonical reduction for $G$-bundles on the Fargues-Fontaine curve}\label{seccompffc}

From now on we work use again the context and notation of the main part of the paper. Assume $G$ to be quasi-split. Let $A$ be a maximal split torus, $T$ the centralizer of $A$ and $B$ a Borel subgroup of $G$ containing $T$.

%By uniqueness of the canonical reduction we obtain that it descends to $K$, compare \cite[5.1]{FGtorseurs}. 
For bundles over the Fargues-Fontaine curve, the canonical reduction satisfies the following splitting property. Let ${\E}$ be a $G$-bundle on $X$ and let ${\E}_P$ be its canonical reduction where $P$ is a standard parabolic subgroup of $G$. Let $M$ be the standard Levi factor of $P$. Then by \cite[Prop.~5.16]{FGtorseurs}, ${\E}_P\times^P M$ is a reduction of ${\E}$ to $M$. In particular, ${\E}$ has a semi-stable reduction to a Levi subgroup of $G$.

\begin{thm}\label{thmcompareff}
Let $P_1,P_2$ be standard parabolic subgroups of $G$ with standard Levi subgroups $M_1,M_2$. Let ${\E}_{P_1}\in \Bun_{P_1}^{ss}$ and ${\E}_{P_2}\in \Bun_{P_2}$ be reductions of the same $G$-bundle ${\E}$ on $X$. Assume that ${\E}_{P_1}$ is the canonical reduction of ${\E}$.
\begin{enumerate}
\item $v({\E}_{P_2})\preceq v({\E}_{P_1})$.\end{enumerate}
Assume that the images of $v({\E}_{P_1})$ and of $v({\E}_{P_2})$ in $\pi_1(M_2)_{\Gamma,\Q}$ coincide. Let $Q=P_1\cap P_2$. 
\begin{enumerate}
\item[(2)] There is a joint reduction ${\E}_Q\in \Bun_{Q}$ of ${\E}_{P_1}$ and ${\E}_{P_2}$.
\item[(3)] Let $M$ be the standard Levi subgroup of $Q$. Then ${\E}_Q\times^Q M$ is a reduction of ${\E}$ to $M$.
\end{enumerate}
\end{thm}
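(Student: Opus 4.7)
The plan is to adapt the proof of Theorem \ref{thmcompare} to the Fargues-Fontaine setting. The crucial extra ingredient is the splitting property \cite[Prop.~5.16]{FGtorseurs}: the canonical reduction $\E_{P_1}$ pushes out to a semi-stable reduction $\E^{M_1} := \E_{P_1}\times^{P_1}M_1$ of $\E$ to the Levi $M_1$. With this in hand, one defines a generic relative position $w \in {}^{M_1}W^{M_2}$ between $\E_{P_1}$ and $\E_{P_2}$, where $W = W(G,A)$ is the relative Weyl group of $G$; the chain
\[
v(\E_{P_1}) \;\succeq\; w^{-1}\bigl(v(\E_{P_1})\bigr) \;\succeq\; v(\E_{P_2})
\]
then follows as in \cite{S}, using Lemma \ref{lems455}(1) for the first inequality and the semi-stability of $\E^{M_1}$ for the second, thereby proving (1). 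For part (2), the matching-slopes hypothesis together with (1) forces $v(\E_{P_1})$ and $w^{-1}(v(\E_{P_1}))$ to have equal image in $\pi_1(M_2)_{\Gamma,\Q}$; the Galois-coinvariant analog of Lemma \ref{lems455}(2), applied in $X_*(A)_\Q$ rather than $X_*(T)_\Q$, then forces $w = 1$, and the argument of Theorem \ref{thmcompare} produces the joint reduction $\E_Q \in \Bun_Q$.

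For part (3), the $Q$-bundle $\E_Q$ pushes out along $Q \twoheadrightarrow Q \cap M_1 = M_1 \cap P_2$ to give a reduction of $\E^{M_1}$ to the parabolic $M_1 \cap P_2$ of $M_1$, whose Levi factor is $M = M_1 \cap M_2$. By the matching-slopes hypothesis, the slope of this $(M_1 \cap P_2)$-reduction has image in $\pi_1(M)_{\Gamma,\Q}$ equal to the projection of the central slope $v(\E_{P_1})$ of $\E^{M_1}$. Since $\E^{M_1}$ is semi-stable in $\Bun_{M_1}$, it decomposes into semi-stable isotypic components, and a parabolic reduction of $\E^{M_1}$ whose slope agrees with the central Newton slope (up to $\pi_1$-coinvariants) must preserve this decomposition. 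Equivalently, for every root $\alpha$ in the unipotent radical $N_Q$ of $Q$ one checks that $\langle \alpha, v(\E_Q)\rangle \geq 0$: for $\alpha \in N_1$ this is the dominant $P_1$-regularity of $v(\E_{P_1})$, and for $\alpha$ in the unipotent radical of $M_1 \cap P_2$ the matching hypothesis together with the centrality of the Newton point of $\E^{M_1}$ in $M_1$ forces $\langle \alpha, v(\E_Q)\rangle = 0$. Then $H^1$-vanishing on the FF curve trivializes the $N_Q$-torsor over $\E_Q \times^Q M$, yielding the $M$-reduction $\E_Q \times^Q M$ of $\E$.

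The main obstacle is making precise the notion of generic relative position on the FF curve, where no classical generic point is available: one can either work at a well-chosen classical point of $X^{\mathrm{ad}}$ and apply a Bruhat-type argument, or give a formal Tannakian proof via the isotypic decomposition of $\E^{M_1}$ together with the dictionary between $G$-torsors and tensor functors from $\mathrm{Rep}_F G$ to $\Bun(X)$. Once this is in place, the slope inequalities in parts (1) and (2) follow as in \cite{S}, and the cohomological vanishing input needed in part (3) is the standard fact that line bundles of non-negative degree on the FF curve have vanishing $H^1$.
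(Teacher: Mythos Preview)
Your treatment of (1) and (2) matches the paper's: it simply asserts that Schieder's argument carries over verbatim, so you are in fact more careful in flagging the generic-relative-position issue on the Fargues--Fontaine curve (the paper does not comment on it).

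For (3) your outline is correct --- pass to the semi-stable $M_1$-bundle $\E^{M_1}=\E_{P_1}\times^{P_1}M_1$, study its reduction to the parabolic $M_1\cap P_2$, and split --- but the justification of the slope inequalities has a gap. You assert $\langle\alpha,v(\E_Q)\rangle>0$ for $\alpha\in N_1$ ``by dominant $P_1$-regularity of $v(\E_{P_1})$'' and $\langle\alpha,v(\E_Q)\rangle=0$ for $\alpha$ a root of $M_1$ outside $M_2$ ``by centrality of the Newton point of $\E^{M_1}$''; both of these are statements about $v(\E_{P_1})$, not about $v(\E_Q)$, and you have not shown that the two slopes agree. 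The paper fills exactly this gap: since $\E_Q$ is a reduction of $\E_{P_1}$, the difference $v(\E_{P_1})-v(\E_Q)$ lies in the $\Q$-span of coroots of $M_1$; since $\E_Q$ is also a reduction of $\E_{P_2}$, the image of $v(\E_Q)$ in $\pi_1(M_2)_{\Q}$ equals that of $v(\E_{P_2})$, which by hypothesis equals that of $v(\E_{P_1})$; hence the difference actually lies in the span of coroots of $M_1\cap M_2=M_Q$. With this in hand the paper invokes \cite[Lemma~6.3]{CFS}, which packages precisely the $H^1$-vanishing splitting argument you sketch. (Once one knows the difference is a combination of coroots of $M$ and that both slopes are $M$-central, the difference is in fact zero, after which your pairing claims become correct.) So your route and the paper's converge, but this slope computation is the crux of (3) and must be made explicit before either the $H^1$-vanishing or \cite[Lemma~6.3]{CFS} can be applied.
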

\begin{proof}
(1) and (2) are the analogs of the first assertion of \cite[Thm.~4.1]{S}, resp. of the first assertion of Theorem \ref{thmcompare}. The same arguments prove that they still hold in this context.

For the last assertion we use \cite[Lemma 6.3]{CFS} and its proof. Consider the parabolic subgroups $Q\subseteq P_1\subseteq G$ and their Levi subgroups $M\subseteq M_1$. Then $Q\cap M_1$ is a standard parabolic subgroup of $M_1$ with standard Levi subgroup $M$. Since ${\E}_{P_1}$ is the canonical reduction, ${\E}_{P_1}\times^{P_1}M_1$ is a reduction of ${\E}$, and a semi-stable $M_1$-bundle. By the first part of the proof of \cite[Lemma 6.3]{CFS}, the reduction ${\E}_Q$ corresponds to a unique reduction $({\E}_{P_1}\times^{P_1}M_1)_{M_1\cap Q}$ of ${\E}_{P_1}\times^{P_1}M_1$ to $M_1\cap Q$.  Since ${\E}_{Q}$ is a reduction of ${\E}_{P_1}$, we have that $v({\E}_{P_1})-v({\E}_Q)$ is a linear combination of coroots in $M_1$. Since ${\E}_Q$ is a reduction of ${\E}_{P_2}$, the image of $v({\E}_Q)$ in $\pi_1(M_2)_{\Q}$ coincides with the image of $v({\E}_{P_2})$. By the assumption of the theorem this agrees with the image of $v({\E}_{P_1})$. Thus $v({\E}_{P_1})-v({\E}_Q)$ is in fact a linear combination of coroots in $M_1\cap M_2=M_Q$. Hence the assumption of \cite[Lemma 6.3]{CFS} is satisfied for the $M_1$-bundle ${\E}_{P_1}\times^{P_1}M_1$, and the lemma finishes the proof.
\end{proof}

\bibliography{Bib}
\bibliographystyle{alpha}
\end{document}